%%%%%%%%%%%%%%%%%%%%%%%%%%%%%%%%%%%%%%%%%%%%%%%%%%%%%%%%%%%%%%%%%%%%%%%%%%%%%%%%%
%********************************************************************************
% BEGIN OF HEADER
%********************************************************************************
%%%%%%%%%%%%%%%%%%%%%%%%%%%%%%%%%%%%%%%%%%%%%%%%%%%%%%%%%%%%%%%%%%%%%%%%%%%%%%%%%

%
% DOCUMENT PROPERTIES
%

\documentclass[
reqno,
%preprint,
%draft,
11pt,
oneside
]{article}

%
% LANGUAGE AND FORMAT
%

\usepackage[ngerman, english]{babel}
\usepackage[utf8]{inputenc}
\usepackage{enumerate}
\usepackage[normalem]{ulem}
\usepackage[babel,french=guillemets,german=swiss]{csquotes}
\usepackage[center,font=small]{caption}
\usepackage{cite}
\usepackage{bbm}
\usepackage[raggedright]{titlesec}
\usepackage{xcolor}

%
% TABLE OF CONTENTS
%

\usepackage{tocbasic}
\DeclareTOCStyleEntry[
beforeskip=.2em plus 1pt,% default is 1em plus 1pt
%pagenumberformat=\textbf
]{tocline}{section}

%
% PAGE GEOMETRY
%

\usepackage[%
left=1in,
right=1.25in,
bottom=1.25in,
top=1in,
footskip=0.5in,
]{geometry}

%
% COLORS
%

\usepackage{color}
\definecolor{LinkColor}{rgb}{0,0,1}
\definecolor{LinkColor2}{rgb}{0,0.5,0}
\definecolor{lbcolor}{rgb}{0.85,0.85,0.85}
\definecolor{FrameColor}{rgb}{0.85,0.85,0.85}
\definecolor{rosso}{rgb}{0.8,0,0}
\definecolor{lightgray}{rgb}{0.5,0.5,0.5}

%
% FRAMES AND LINE NUMBERS
%

%\usepackage{showframe}\renewcommand*\ShowFrameColor{\color{FrameColor}}

% \usepackage{lineno}

% \newcommand*\patchAmsMathEnvironmentForLineno[1]{%
% 	\expandafter\let\csname old#1\expandafter\endcsname\csname #1\endcsname
% 	\expandafter\let\csname oldend#1\expandafter\endcsname\csname end#1\endcsname
% 	\renewenvironment{#1}%
% 	{\linenomath\csname old#1\endcsname}%
% 	{\csname oldend#1\endcsname\endlinenomath}}% 
% \newcommand*\patchBothAmsMathEnvironmentsForLineno[1]{%
% 	\patchAmsMathEnvironmentForLineno{#1}%
% 	\patchAmsMathEnvironmentForLineno{#1*}}%
% \AtBeginDocument{%
% 	\patchBothAmsMathEnvironmentsForLineno{equation}%
% 	\patchBothAmsMathEnvironmentsForLineno{align}%
% 	\patchBothAmsMathEnvironmentsForLineno{flalign}%
% 	\patchBothAmsMathEnvironmentsForLineno{alignat}%
% 	\patchBothAmsMathEnvironmentsForLineno{gather}%
% 	\patchBothAmsMathEnvironmentsForLineno{multline}%
% }

%\linenumbers

%
% LISTS
%

\usepackage{enumitem}

%
% GRAPHICS
%

\usepackage{graphicx}
\usepackage{placeins}
\usepackage{overpic}
%\captionsetup[figure]{font=scriptsize,labelfont=scriptsize}

%
% MATHEMATICS
%

\usepackage{amsmath}
\usepackage{amssymb}
\usepackage{dsfont}
\usepackage{empheq}
%\allowdisplaybreaks
\usepackage{amsthm}
\numberwithin{equation}{section}

%
% PDF PROPERTIES
%

\usepackage[%
pdftitle={Titel},% 
pdfauthor={Autor},% 
pdfcreator={LaTeX, LaTeX with hyperref and KOMA-Script},% Genutzte Programme
pdfsubject={Betreff}, % Betreff
pdfkeywords={Keywords}
]{hyperref} 

\hypersetup{%
	colorlinks	=true,% Definition der Links im PDF File
	linkcolor	=LinkColor,%
	anchorcolor	=LinkColor,%
	citecolor	=LinkColor2,%
	filecolor	=LinkColor,%
	menucolor	=LinkColor,%
	urlcolor	=LinkColor,%
}

%
% THEOREMS
%

\newtheorem{theorem}{Theorem}[section]

\newtheorem{proposition}[theorem]{Proposition}

\newtheorem{definition}[theorem]{Definition}

\theoremstyle{definition}
\newtheorem{remark}[theorem]{Remark}

%\usepackage{titlesec}
%\titleformat*{\section}{\centering\large\bfseries}

\makeatletter
\renewenvironment{proof}[1][\proofname]{%
	\par\pushQED{\qed}\normalfont%
	\topsep6\p@\@plus6\p@\relax
	\trivlist\item[\hskip\labelsep\bfseries#1\@addpunct{.}]%
	\ignorespaces
}{%
	\popQED\endtrivlist\@endpefalse
}
\makeatother

\makeatletter
\renewcommand\paragraph{\@startsection{paragraph}{4}{\z@}%
	{1ex \@plus1ex \@minus.2ex}%
	{-1em}%
	{\normalfont\normalsize\bfseries}}
\renewcommand\subparagraph{\@startsection{paragraph}{4}{\z@}%
	{1ex \@plus1ex \@minus.2ex}%
	{-1em}%
	{\normalfont\normalsize\itshape}}
\makeatother

%
%	MACROS
%

\newcommand{\abs}[1]{\left| #1 \right|}
\newcommand{\bigabs}[1]{\big| #1 \big|}

\newcommand{\norm}[1]{\| #1 \|}
\newcommand{\bignorm}[1]{\big\| #1 \big\|}
\newcommand{\ang}[2]{ \langle #1 , #2  \rangle}
\newcommand{\bigang}[2]{ \big< #1 , #2  \big>}
\newcommand{\scp}[2]{ \left( #1 , #2  \right)}
\newcommand{\bigscp}[2]{\big( #1 , #2 \big)}
\newcommand{\mean}[1]{\langle #1 \rangle}
\newcommand{\meano}[1]{{\langle #1 \rangle}_{\Omega}}
\newcommand{\meang}[1]{{\langle #1 \rangle}_{\Gamma}}

\newcommand{\R}{\mathbb R}

\newcommand{\n}{\mathbf{n}}
\renewcommand{\v}{{\boldsymbol v}}
\newcommand{\w}{{\boldsymbol w}}
\newcommand{\p}{\overline{p}}

\newcommand{\D}{\mathbf{D}}

\newcommand{\N}{\mathbb{N}}

\newcommand{\bH}{\mathbf{H}}
\newcommand{\J}{\mathbf{J}}
\newcommand{\K}{\mathbf{K}}
\newcommand{\bL}{\mathbf{L}}
\newcommand{\T}{\mathbf{T}}

\newcommand{\DD}{\mathcal{D}}
\newcommand{\HH}{\mathcal{H}}
\newcommand{\LL}{\mathcal{L}}

\newcommand{\UU}{\mathcal{U}}
\newcommand{\ZZ}{\mathcal{Z}}

\newcommand{\Db}{{\mathcal{D}_{\beta}}}
\newcommand{\Da}{{\mathcal{D}_{\alpha}}}
\newcommand{\Dbp}{{\mathcal{D}_{\beta}^{\prime}}}

\newcommand{\ZZK}{\ZZ_k}

\newcommand{\mom}{m_\Omega}
\newcommand{\mga}{m_\Gamma}
\newcommand{\nga}{n_\Gamma}

\newcommand{\trho}{\tilde{\rho}}

\newcommand{\tJ}{\tilde{\mathbf J}}
\newcommand{\tK}{\tilde{\mathbf K}}

\newcommand{\tT}{\tilde{\mathbf T}}

\newcommand{\Om}{\Omega}
\newcommand{\Ga}{\Gamma}
\newcommand{\Si}{\Sigma}

\newcommand{\intO}{\int_\Omega}

\newcommand{\intG}{\int_\Gamma}

\newcommand{\eps}{\varepsilon}

\newcommand{\dtau}{\;\mathrm d\tau}
\newcommand{\dx}{\;\mathrm dx}

\newcommand{\dt}{\;\mathrm dt}

\newcommand{\dS}{\;\mathrm dS}

\newcommand{\dG}{\;\mathrm d\Ga}

\newcommand{\h}{\mathds{h}}

\newcommand{\ddt}{\frac{\mathrm d}{\mathrm dt}}

\newcommand{\del}{\partial}
\newcommand{\delt}{\partial_{t}}

\newcommand{\deln}{\partial_\n}

\newcommand{\delph}{\partial_{\phi}}
\newcommand{\delgph}{\partial_{\Grad\phi}}
\newcommand{\delps}{\partial_{\psi}}
\newcommand{\delgps}{\partial_{\Gradg\psi}}

\newcommand{\Grad}{\nabla}
\newcommand{\Lap}{\Delta}
\newcommand{\Div}{\textnormal{div}}
\newcommand{\Gradg}{\nabla_\Ga}
\newcommand{\Lapg}{\Delta_\Ga}
\newcommand{\Divg}{\textnormal{div}_\Ga}

\newcommand{\emb}{\hookrightarrow}

\newcommand{\suchthat}{\;\ifnum\currentgrouptype=16 \middle\fi|\;}

\newcommand{\revised}[1]{{\color{black}#1}}

%%%%%%%%%%%%%%%%%%%%%%%%%%%%%%%%%%%%%%%%%%%%%%%%%%%%%%%%%%%%%%%%%%%%%%%%%%%%%%%%%
%********************************************************************************
% END OF HEADER
%********************************************************************************
%%%%%%%%%%%%%%%%%%%%%%%%%%%%%%%%%%%%%%%%%%%%%%%%%%%%%%%%%%%%%%%%%%%%%%%%%%%%%%%%%

%%%%%%%%%%%%%%%%%%%%%%%%%%%%%%%%%%%%%%%%%%%%%%%%%%%%%%%%%%%%%%%%%%%%%%%%%%%%%%%%%
%********************************************************************************
% BEGIN OF DOCUMENT
%********************************************************************************
%%%%%%%%%%%%%%%%%%%%%%%%%%%%%%%%%%%%%%%%%%%%%%%%%%%%%%%%%%%%%%%%%%%%%%%%%%%%%%%%%

\begin{document}

%
%	TITLE PAGE
%

\title{\bf
    Two-phase flows with bulk-surface interaction:\\
	thermodynamically consistent Navier--Stokes--Cahn--Hilliard models\\
	with dynamic boundary conditions}	

\author{
Andrea Giorgini\footnote{Research supported by MUR grant Dipartimento di Eccellenza 2023-2027.}\\[7pt]
\small Politecnico di Milano\\
\small Dipartimento di Matematica \\ 
\small Via E. Bonardi 9, 20133 Milano, Italy\\
\small \href{mailto:andrea.giorgini@polimi.it}{andrea.giorgini@polimi.it}
\and 
Patrik Knopf\footnote{Research supported by the DFG (Deutsche Forschungsgemeinschaft): Project 52469428 and RTG 2339.}\\[7pt]
\small Universität Regensburg \\
\small Fakultät für Mathematik\\
\small Universitätsstr.~31, D-93053 Regensburg\\
\small \href{mailto:patrik.knopf@ur.de}{patrik.knopf@ur.de}
}

%\author{Andrea Giorgini \footnotemark[1] 
%		\and Patrik Knopf \footnotemark[2]}

\date{ }

\renewcommand{\thefootnote}{\fnsymbol{footnote}}

%}
	
\maketitle

\begin{center}
    \vspace{-3ex}
	\scriptsize
	% \color{white}
	{
		\textit{This is a preprint version of the paper. Please cite as:} \\  
		A. Giorgini and P. Knopf, \textit{J. Math. Fluid Mech.} 25:65 (2023), \\
		\url{https://doi.org/10.1007/s00021-023-00811-w}
	}
 \vspace{1ex}
\end{center}

%
%	ABSTRACT
%

\begin{abstract}
We derive a novel thermodynamically consistent Navier--Stokes--Cahn--Hilliard system with dynamic boundary conditions. 
This model describes the motion of viscous incompressible binary fluids with different densities.
In contrast to previous models in the literature, our new model allows for surface diffusion, a variable contact angle between the diffuse interface and the boundary, and mass transfer between bulk and surface. 
In particular, this transfer of material is subject to a mass conservation law including both a bulk and a surface contribution.
The derivation is carried out by means of local energy dissipation laws and the Lagrange multiplier approach.
Next, in the case of fluids with matched densities, we show the existence of global weak solutions in two and three dimensions as well as the uniqueness of weak solutions in two dimensions. 	
\\[1ex]
\textit{Keywords:} 
Two-phase flows, 
Navier--Stokes--Cahn--Hilliard system, 
bulk-surface interaction, 
dynamic boundary conditions
\\[1ex]	
\textit{Mathematics Subject Classification:}
%35Q30,
35D30,
35Q35,
76D03,
76D05,
76T06,
76T99.
\end{abstract}

 \begin{small}
 	\setcounter{tocdepth}{2}
 	\hypersetup{linkcolor=black}
\tableofcontents
 \end{small}

\setlength\parindent{0ex}
\setlength\parskip{1ex}
\allowdisplaybreaks

\section{Introduction}

The description of multi-phase flows is a primal topic in modern continuum fluid dynamics with enormous applications in biology, chemistry and engineering. Two major approaches have been developed according to the representation of the interface separating the different components (see \cite{du2020phase, AbelsGarckeReview, giga2017variational,pruss2016moving} and the references therein): Sharp Interface (SI) versus Diffuse Interface (DI) methods (see Figure~\ref{fig:DI-SI}). In the former, the interface is represented by a hypersurface in the surrounding domain, which leads to the formulation of free boundary problems. In the latter, also called phase-field method, the interface is represented by a layer with finite thickness, corresponding to the level sets of the concentration function whose evolution is governed by a macroscopic equation. The main advantage of this approach is the Eulerian formulation of the phase-field equation, which avoids tracking the interface as in free boundary problems. Although conceptually opposites, these approaches are strictly related since a proper scaling of DI systems approximates SI models in the so-called {\it sharp interface limit} (see, e.g., \cite{AbelsGarckeReview}).  

\begin{figure}[h]
\centering
\includegraphics[width=0.8\textwidth]{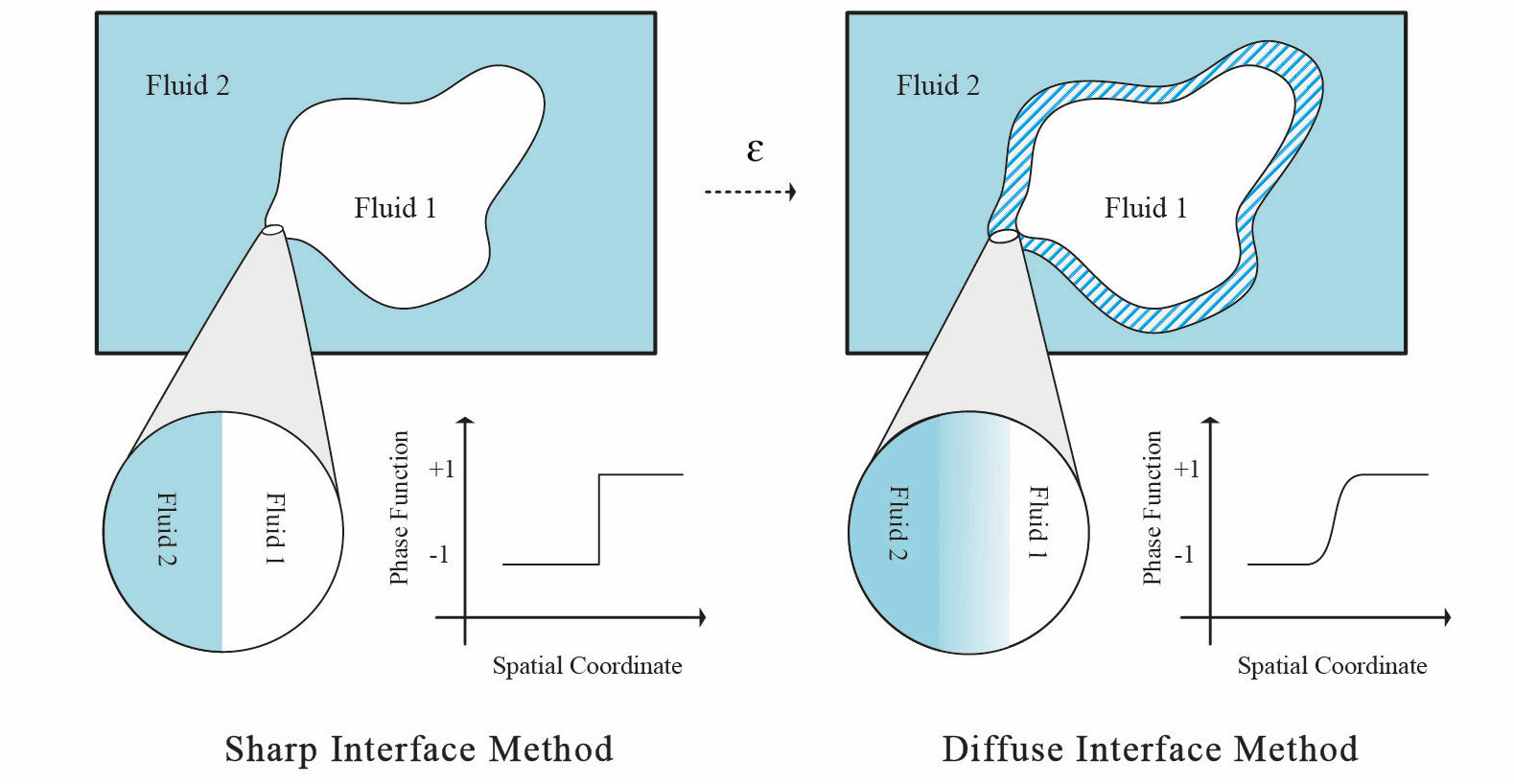}
\caption{Difference between the Sharp Interface (left) and Diffuse Interface  (right) methods.}
\label{fig:DI-SI}
\end{figure}
%computational viewpoint

In the context of the DI theory, the cornerstone system for the motion of two viscous and incompressible fluids with matched (constant) densities is the model H, which was rigorously derived in \cite{GurtinPolignoneVinals}. The model H consists of the following Navier--Stokes--Cahn--Hilliard system
\begin{subequations}
\label{ModelH}
\begin{alignat}{2}
    \label{mH:1}
    &\rho \delt \v  
    + \rho \Div\big( \v \otimes \v \big)
    - \Div\big( 2\nu(\phi) \, \D\v \big)
    + \Grad p
    = - \eps\, \Div\big(\Grad\phi \otimes \Grad\phi \big)
    &&\quad\text{in $Q$}, 
    \\[1ex]
    \label{mH:2}
    &\Div \, \v = 0
    &&\quad\text{in $Q$}, 
    \\[1ex]
    \label{mH:3}
    &\delt \phi + \Div(\phi \v )
    = \Div\big( \mom(\phi) \Grad \mu \big)
    &&\quad\text{in $Q$}, 
    \\[1ex]
    \label{mH:4}
    &\mu = -\eps \Lap \phi + \eps^{-1} F'(\phi)
    &&\quad\text{in $Q$}.
\end{alignat}
\end{subequations}
Here, $Q:=\Omega\times (0,T)$, where $\Omega\subset\R^d$ with $d= 2,3$ denotes a bounded, smooth domain with boundary $\Gamma$ and $T>0$ is a given final time. 
In \eqref{ModelH}, $\rho$ is the constant density of the mixture, $\v$ is the velocity of the mixture, $p$ is the pressure, $\phi$ is the difference of the fluid concentrations, $\mu$ is the chemical potential. In addition, $\nu$ is the concentration depending viscosity, $\D$ is the symmetric gradient operator, $\varepsilon$ is a positive parameter related to the thickness of the interface, $m_\Omega$ is the (bulk) mobility function and $F$ is a double-well potential. 

The fundamental assumption in deriving the model H is that the density of the mixture is constant, meaning that the two fluids have the same constant density $\rho$ (matched densities). In the past two decades, several works investigated suitable Navier--Stokes--Cahn--Hilliard generalizations of the model H aiming to describe both incompressible mixtures with unmatched densities and compressible two-phase flows. Without claiming completeness, we refer the reader to \cite{AGG,lowengrub1998quasi, boyer2002theoretical, ding2007diffuse, shen2013mass,shokrpour2018diffuse,giga2017variational,heida2012development,freistuhler2017phase}.
Among them is the thermodynamically consistent model for viscous incompressible mixtures with unmatched densities proposed by Abels, Garcke and Gr\"{u}n (AGG model) in the seminal work \cite{AGG}, which reads as follows:
\begin{subequations}
\label{AGG}
\begin{alignat}{2}
    \label{AGG:1}
    &\delt \big(\rho(\phi) \v\big) 
    + \Div\big( \v \otimes (\rho(\phi)\v + \J) \big)
    - \Div\big( 2\nu(\phi) \, \D\v \big)
    + \Grad p
    = - \eps\, \Div\big(\Grad\phi \otimes \Grad\phi \big)
    &&\quad\text{in $Q$}, 
    \\[1ex]
    \label{AGG:2}
    &\Div\, \v = 0
    &&\quad\text{in $Q$},
    \\[1ex]
    \label{AGG:3}
    &
    \revised{\rho(\phi)= \trho_1 \frac{1-\phi}{2}+ \trho_2 \frac{1+\phi}{2},}
    \quad 
    \J = - \tfrac 12 (\trho_2 - \trho_1) 
        \mom(\phi) \Grad\mu
    &&\quad\text{in $Q$},
    \\[1ex]
    \label{AGG:4}
    &\delt \phi + \Div(\phi \v )
    = \Div\big( \mom(\phi) \Grad \mu \big)
    &&\quad\text{in $Q$}, 
    \\[1ex]
    \label{AGG:5}
    &\mu = -\eps \Lap \phi + \eps^{-1} F'(\phi)
    &&\quad\text{in $Q$}.
\end{alignat}
\end{subequations}
In \eqref{AGG}, the novel (coupling) terms are the concentration depending density $\rho(\phi)$, where $\trho_1$ and $\trho_2$ are the different (constant) densities of the two fluids, and the additional mass flux $\J$.

In the literature, both the model H and the AGG model have mostly been supplemented with the classical no-slip/homogeneous Neumann boundary conditions
\begin{subequations}
\label{classic-BC}  
\begin{alignat}{2}
    \label{classic-BC:v} 
    \v &= \mathbf{0}
    &&\quad \text{on $\Sigma$},\\
    \label{classic-BC:phi} 
    \partial_\n \phi &= 0
    &&\quad \text{on $\Sigma$},\\
    \label{classic-BC:mu}
    \partial_\n \mu &= 0
    &&\quad \text{on $\Sigma$}.
\end{alignat}
\end{subequations}
Here, $\Sigma:=\Gamma\times (0,T)$ and $\n$ denotes the outward normal vector field on $\Gamma$. 
The conditions \eqref{classic-BC:v} and \eqref{classic-BC:mu} ensure the mass conservation in the bulk, namely $\int_\Omega \phi(t) \dx$ is a conserved quantity.
Sufficiently regular solutions of the system (\eqref{AGG},\eqref{classic-BC}) satisfy the energy dissipation law
\begin{align}
	\label{CONS:ENERGY:AGG}
		\ddt E(\v,\phi)
		= - \intO 2\nu(\phi)\, \abs{\D\v}^2 \dx
		- \intO \mom(\phi) \abs{\Grad \mu}^2 \dx, 
\end{align}
where the \textit{total energy} $E$ is given by
\begin{align}
	\label{DEF:E:AGG}
	E(\v,\phi) 
	&:= E_\text{kin}(\phi,\v) + E_\text{bulk}(\phi),
\end{align}
where
\begin{align}
	\label{DEF:EKIN+EBULK:AGG}
	E_\text{kin}(\v,\phi) 
	:= \intO \frac{\rho(\phi)}{2} \abs{\v}^2 \dx, \quad
	E_\text{bulk}(\phi) := \intO \frac{\eps}{2} \abs{\Grad\phi}^2 + \frac{1}{\eps} F(\phi) \dx
\end{align}
denote the \textit{kinetic energy} and the \textit{free energy in the bulk}, respectively.
We point out that in this model, the fluid concentrations on the surface do not affect the dynamics in the bulk.

Concerning the mathematical analysis, the Cauchy problems corresponding to \eqref{ModelH} and \eqref{AGG}, each endowed with \eqref{classic-BC}, have been studied in \cite{Abels2009, boyer1999, GalGrasselli2010, GMT2019} and in 
\cite{abels2013existence,abels2013incompressible,AbelsWeber2021, giorgini2021well, giorgini2022-3D}, respectively. We also refer the interested reader to \cite{AbelsGarckeReview} (see also \cite{AGG}) for the connection between the above DI systems and the two-phase Navier--Stokes free boundary problem. 

Even though the AGG model (\eqref{AGG},\eqref{classic-BC}) is already very advanced and is capable of describing the complex case where the two fluids have different specific densities, it still inherits some limitations from the underlying (convective) Cahn--Hilliard equation with homogeneous Neumann boundary conditions. The main limitations are:
\begin{enumerate}[label = \textnormal{(L\arabic*)}, ref = \textnormal{L\arabic*}]
	\item\label{L1} The boundary condition \eqref{classic-BC:phi} enforces the diffuse interface separating both fluids to always intersect the boundary $\Gamma$ at a perfect angle of ninety degrees. Of course, this restrictive condition will not always be satisfied in concrete applications as the contact angle of the interface might deviate from ninety degrees and even change dynamically over the course of time.
	Moreover, as discussed in \cite{Qian-Wang-Sheng}, the no-slip boundary condition \eqref{classic-BC:v} is not well-suited for describing general moving contact line phenomena. This is because situations where the velocity field actually contributes to the motion of the contact line of the diffuse interface in the bulk at the boundary of the domain cannot be described.
	\item\label{L2} The boundary condition \eqref{classic-BC:mu} can be regarded as a no-flux boundary condition as it already implies $\J\cdot\n = 0$ on $\Sigma$. Therefore, the system (\eqref{AGG},\eqref{classic-BC}) can merely describe the situation where the mass of both fluids in the bulk $\Omega$ is conserved but it is not capable of describing a transfer of material between bulk and surface which could, for instance, be caused by absorption or adsorption processes as well as chemical reactions taking place on the boundary (see, e.g., \cite{KLLM2021}).
\end{enumerate}
Due to these well-known limitations of the Cahn--Hilliard model, and in order to describe short-range interactions between bulk and surface more precisely, physicists proposed that the total free energy should contain an additional contribution on the surface being also of Ginzburg--Landau type (see, e.g., \cite{binder-frisch}):
\begin{align}
	\label{DEF:EFREE}
	E_\text{free}(\phi,\psi)
	:= E_\text{bulk}(\phi) + E_\text{surf}(\psi)
	\quad\text{where}\quad
	E_\text{surf}(\psi)
	:= \intG \frac{\delta\kappa}{2} \abs{\Gradg\psi}^2 + \frac{1}{\delta} G(\psi) \dS,
\end{align}
and $E_\text{bulk}$ is defined as in \eqref{DEF:EKIN+EBULK:AGG}. Here, $\psi$ is an additional phase-field variable describing the material distribution on the surface, the parameter $\delta>0$ corresponds to the width of the diffuse interface on the surface and the constant $\kappa\ge 0$ acts as a weight for surface diffusion effects.
In most cases, $\psi$ is just assumed to be the trace of the phase-field $\phi$ on the boundary, i.e.,
\begin{align}
    \label{BC:TRACE}
    \phi\vert_\Sigma = \psi \quad\text{on $\Sigma$}.
\end{align}
However, also more general relations between $\phi$ and $\psi$ (so-called transmission conditions) have been investigated in the literature (see, e.g., \cite{CFL,knopf-lam}).
The function $G$ stands for an additional potential on the surface. If phase separating processes on the boundary are to be described, $G$ is also chosen to be double-well shaped.

Based on this free energy, several dynamic boundary conditions for the Cahn--Hilliard (CH) equation%
\begin{subequations}
\label{CH}
    \begin{alignat}{2}
    \label{CH:1}
    &\delt \phi 
    = \Div\big( \mom(\psi) \Grad \mu \big) 
    &&\;\;\text{in $Q$}, 
    \\[1ex]
    \label{CH:2}
    &\mu = - \eps \Lap \phi + \delta^{-1} F'(\phi)
    &&\;\;\text{in $Q$}
\end{alignat}
\end{subequations}
have been introduced in the literature. For instance, the Allen--Cahn type dynamic boundary condition
\begin{align}
    \label{DBC:AC}
    \delta\delt \psi = \kappa\delta\Lapg\psi - \eps\deln\phi - \frac{1}{\delta} G'(\psi)
    \quad\text{on $\Sigma$},
\end{align}
to replace \eqref{classic-BC:phi} was proposed in \cite{kenzler} and further analyzed in \cite{CGM}.
Here, the symbol $\Lapg$ denotes the Laplace--Beltrami operator (see, e.g., \cite{dziuk2013finite} for differential operators on surfaces).

In recent times, dynamic boundary conditions which also exhibit a Cahn--Hilliard type structure have become very popular, especially since they allow for a better description of the transfer of material between bulk and surface (see, e.g., \cite{KLLM2021}). In these models, the boundary condition \eqref{classic-BC:phi} is replaced by a Cahn--Hilliard type equation on the surface, that is 
\begin{subequations}
\label{DBC:CH}
    \begin{alignat}{2}
    \label{DBC:CH:1}
    &\delt \psi 
    = \Divg\big( \mga(\psi) \Gradg \theta \big) - \beta \mom(\psi)\, \deln \mu 
    &&\quad\text{on $\Sigma$}, 
    \\[1ex]
    \label{DBC:CH:2}
    &\theta = - \delta\kappa \Lapg \psi + \delta^{-1} G'(\psi)
    + \eps \deln \phi
    &&\quad\text{on $\Sigma$}.
\end{alignat}
Here, $\theta$ denotes the chemical potential on the boundary, the function $\mga$ describes the mobility and the parameter $\kappa\ge 0$ acts as a weight for surface diffusion effects. Furthermore, the boundary condition \eqref{classic-BC:mu} needs to be replaced by a suitable coupling condition for the bulk chemical potential $\mu$ and the surface chemical potential $\theta$. Recently, for a parameter $\beta\neq 0$, the following choices have been considered:
\begin{align}
    \label{DBC:CH:3}
    \begin{cases}
        L\mom(\psi) \, \deln \mu = \beta \theta - \mu 
        &\text{if $L\in[0,\infty)$},\\
        \mom(\psi)\, \deln \mu = 0 
        &\text{if $L = \infty$}.
    \end{cases}
\end{align}
\end{subequations}
\begin{itemize}
\item If $L=0$, \eqref{DBC:CH:3} is to be interpreted as the Dirichlet type coupling condition $\beta\theta = \mu$ on $\Sigma$. This means that the chemical potentials $\mu$ and $\theta$ are assumed to always remain in chemical equilibrium. In this case, the dynamic boundary conditions \eqref{DBC:CH} were proposed in \cite{Gal} (for $\kappa=0$) and in \cite{GMS2011} (for general $\kappa\ge 0$). Sufficiently regular solutions of the system (\eqref{BC:TRACE},\eqref{CH},\eqref{DBC:CH}) satisfy the mass conservation law
\begin{align}
    \label{GMS:MASS}
    \beta\intO \phi(t) \dx + \intG \psi(t) \dS 
    = \beta\intO \phi_0 \dx + \intG \psi_0 \dS
\end{align}
and the energy dissipation law
\begin{align}
    \label{GMS:ENERGY}
    \ddt E_\text{free}\big(\phi(t),\psi(t)\big) 
    = - \intO \mom(\phi) \abs{\Grad \mu}^2 \dx
        - \intG \mga(\psi) \abs{\Gradg \theta}^2 \dS
\end{align}
for all $t\in[0,T]$.
In the case $\beta>0$, the well-posedness of the system (\eqref{BC:TRACE},\eqref{CH},\eqref{DBC:CH}) was studied in \cite{GMS2011} and its long-time behavior was investigated in \cite{GMS2011,GKY2022}.
\item For $L=\infty$, the dynamic boundary conditions \eqref{DBC:CH} were derived in \cite{Liu-Wu2019} by means of an energetic variational approach based on Onsager's law and the least action principle. Here, due to \eqref{DBC:CH:3}, the chemical potentials are not directly coupled. However, mechanical interactions of the materials between bulk and surface are still taken into account by the trace relation~\eqref{BC:TRACE}. Since \eqref{DBC:CH:3} with $L=\infty$ implies that the mass flux between bulk and surface is zero, the bulk mass and the surface mass are conserved separately. To be precise, sufficiently regular solutions of the system (\eqref{BC:TRACE},\eqref{CH},\eqref{DBC:CH}) satisfy the mass conservation law
\begin{align}
    \label{LW:MASS}
    \intO \phi(t) \dx = \intO \phi_0 \dx
    \quad\text{and}\quad
    \intG \psi(t) \dS = \intG \psi_0 \dS
\end{align}
and the energy dissipation law \eqref{GMS:ENERGY} for all $t\in[0,T]$. The well-posedness of the system (\eqref{BC:TRACE},\eqref{CH},\eqref{DBC:CH}) was established in \cite{Liu-Wu2019,GK} and its long-time behavior was studied in \cite{Liu-Wu2019,MW}.
\item In the case $L\in(0,\infty)$, the boundary conditions \eqref{DBC:CH} were proposed and analyzed in \cite{KLLM2021}. Here, the chemical potentials $\mu$ and $\theta$ are coupled by a Robin type boundary condition. The constant $1/L$ is related to the kinetic rate associated with adsorption/desorption processes or chemical reactions on the boundary. 
Sufficiently regular solutions of the system (\eqref{BC:TRACE},\eqref{CH},\eqref{DBC:CH}) satisfy the mass conservation law \eqref{GMS:MASS} and the energy dissipation law
\begin{equation}
    \label{KLLM:ENERGY}
\begin{split}
    \ddt E_\text{free}\big(\phi(t),\psi(t)\big) 
    &= - \intO \mom(\phi) \abs{\Grad \mu}^2 \dx
        - \intG \mga(\psi) \abs{\Gradg \theta}^2 \dS
    \\
    &\qquad - \frac 1L \intG (\beta\theta - \mu)^2 \dS
    \end{split}
\end{equation}
for all $t\in[0,T]$. 
The weak and strong well-posedness of the system (\eqref{BC:TRACE},\eqref{CH},\eqref{DBC:CH}) was established in \cite{KLLM2021}.
It was further shown that the case $L\in(0,\infty)$ can be understood as an interpolation between $L=0$ and $L=\infty$ as these cases are obtained as asymptotic limits on the level of strong solutions to the system (\eqref{BC:TRACE},\eqref{CH},\eqref{DBC:CH}) as the parameter $L$ is sent to zero or to infinity, respectively.
For the investigation of long-time dynamics, we refer to \cite{GKY2022}.
\end{itemize}
We further point out that a nonlocal variant of the system (\eqref{BC:TRACE},\eqref{CH},\eqref{DBC:CH}) was proposed and investigated in \cite{KS}. Recent reviews of results concerning the Cahn--Hilliard equation with dynamic boundary conditions can be found in \cite{Miranville-Book,Wu-Review}.

In the context of Navier--Stokes--Cahn--Hilliard models for two-phase flows, in order to overcome the aforementioned limitation \eqref{L1},
the authors in \cite{Qian-Wang-Sheng} proposed a variational derivation through Onsager's principle of maximal energy dissipation of a new class of generalized Navier slip boundary conditions for two-phase flows
\begin{subequations}
\label{GNBC}
\begin{alignat}{2}
    \label{GNBC:1}
    &\v \cdot \n=0, \quad 2\nu(\psi) \left( \D \v \cdot \n\right)_\tau+ \gamma \v_\tau= L(\psi)\nabla_\Ga \psi 
    &&\quad\text{on $\Sigma$}, 
    \\
    \label{GNBC:2}
    &\partial_t \psi + \v_\tau \cdot \nabla_\Ga \psi=- \beta  L(\psi),\quad 
    &&\quad\text{on $\Sigma$},
    \\
    \label{GNBC:3}
    &\partial_\n \mu=0 
    &&\quad\text{on $\Sigma$},
\end{alignat}
\end{subequations}
where $\psi = \phi\vert_\Sigma$ as in \eqref{BC:TRACE}, and
$$
L(\psi)= - \kappa \Delta_\Ga \psi
+\varepsilon \partial_\n \psi + G'(\psi). 
$$
Here, the subscript $\tau$ denotes the tangential component of a vector $\w$, i.e., $\w_\tau=\w-(\w\cdot \n)\n$, $\nabla_\Ga$ is the tangential gradient on $\Gamma$ and $\Lapg$ denotes the Laplace--Beltrami operator on $\Gamma$ (see, e.g., \cite{dziuk2013finite} for differential operators on surfaces). The phenomenological parameters $\beta$, $\kappa$ and $\gamma$ are positive, and $G$ represents a surface double-well potential. 
The surface dynamic of the concentration $\psi$ is described by \eqref{GNBC:2} that is a convective Allen--Cahn type equation on the surface. It can be understood as an adaptation of the dynamic boundary condition \eqref{DBC:AC} (with $\delta=1)$ to the situation of an additional volume-averaged velocity field. Notice that the boundary condition \eqref{GNBC:3} (together with either \eqref{ModelH} or \eqref{AGG}) entails
that $\int_\Omega \phi(t) \dx$ is invariant (as for \eqref{classic-BC:mu}), whereas $\int_\Ga \psi(t) \dG$ is not a conserved quantity. 
The model H \eqref{ModelH} endowed with \eqref{GNBC} was first studied in \cite{GGM2016}. The authors proved the existence of global weak solutions assuming that both $F$ and $G$ are polynomial-like functions or $F$ is a singular potential and $G$ is polynomial function. In the former case, they also showed the convergence of any weak solution to a stationary state.  
Later on, the global existence of weak solutions for \eqref{AGG} endowed with \eqref{GNBC} has been achieved in \cite{GGW2019}. More recently, the compressible Navier--Stokes--Cahn--Hilliard system supplemented with \eqref{GNBC} has been studied in \cite{cherfils2019compressible}, and the Navier--Stokes--Allen--Cahn and the Navier--Stokes--Voigt--Allen--Cahn systems with \eqref{GNBC} have been analyzed in \cite{GGP2021}. For the stochastic model H with dynamic boundary conditions \eqref{GNBC}, we mention the recent results in \cite{Qiu-Wang}. It is worth pointing out that none of the aforementioned works established any uniqueness result for weak solutions in either two or three dimensions. In fact, the only uniqueness result established so far concerns {\it quasi-strong} solutions for the Navier--Stokes--Voigt--Allen--Cahn systems with \eqref{GNBC} in \cite{GGP2021}. 

In this paper, we derive and study a new class of thermodynamically consistent Navier--Stokes--Cahn--Hilliard systems with dynamic boundary conditions. Our model derivation is based on local mass balance laws in the bulk and on the surface in which the mass fluxes are still to be determined. Arguing similarly as in \cite{AGG}, after considering local energy dissipation laws, we apply the Lagrange multiplier approach to complete our model derivation by identifying the unknown mass fluxes on the macroscopic level.
The resulting Navier--Stokes--Cahn--Hilliard (NSCH) model reads as follows:
\begin{subequations}
\label{NSCH}
\begin{alignat}{2}
    \label{NSCH:1}
    &\delt \big(\rho(\phi) \v\big) 
    + \Div\big( \v \otimes (\rho(\phi)\v + \J) \big)
    - \Div\big( 2\nu(\phi) \, \D\v \big)
    + \Grad p
    = - \eps\, \Div\big(\Grad\phi \otimes \Grad\phi \big)
    &&\;\;\text{in $Q$}, 
    \\[1ex]
    \label{NSCH:2}
    &\Div \, \v = 0,
    &&\;\;\text{in $Q$}, 
    \\[1ex]
    \label{NSCH:3}
    &\revised{\rho(\phi)= \trho_1 \frac{1-\phi}{2}+ \trho_2 \frac{1+\phi}{2},\quad 
    \J = - \tfrac 12 (\trho_2 - \trho_1) 
        \mom(\phi) \Grad\mu}
    &&\;\;\text{in $Q$}, 
    \\[1ex]
    \label{NSCH:4}
    &\delt \phi + \Div(\phi \v )
    = \Div\big( \mom(\phi) \Grad \mu \big)
    &&\;\;\text{in $Q$}, 
    \\[1ex]
    \label{NSCH:5}
    &\mu = -\eps \Lap \phi + \eps^{-1} F'(\phi)
    &&\;\;\text{in $Q$}, 
    \\[1ex]
    \label{NSCH:6}
    &\delt \psi + \Divg(\psi \v_\tau ) 
    = \Divg\big( \mga(\psi) \Gradg \theta \big) - \beta \mom(\psi)\, \deln \mu 
    &&\;\;\text{on $\Sigma$}, 
    \\[1ex]
    \label{NSCH:7}
    &\theta = - \delta\kappa \Lapg \psi + \delta^{-1} G'(\psi)
    + \eps \deln \phi
    &&\;\;\text{on $\Sigma$}, 
    \\[1ex]
    \label{NSCH:8}
    &\v\cdot \n = 0,
    \qquad
    \phi\vert_\Gamma = \psi, 
    \qquad
    \begin{cases}
        L\mom(\psi) \, \deln \mu = \beta \theta - \mu 
        &\text{if $L\in[0,\infty)$},\\
        \mom(\psi)\, \deln \mu = 0 
        &\text{if $L = \infty$}
    \end{cases}
    &&\;\;\text{on $\Sigma$},
    \\[1ex]
    \label{NSCH:9}
    & \big[2\nu(\psi) (\D\v\,\n) + \gamma(\psi) \v \big]_\tau
    = \big[ -\psi \Gradg \theta + \tfrac 12 \, (\J\cdot\n) \, \v\big]_\tau
    &&\;\;\text{on $\Sigma$}, 
    \\[1ex]
    \label{NSCH:10}
    &\v\vert_{t=0} = \v_0,\quad \phi\vert_{t=0} = \phi_0
    &&\;\;\text{in $\Omega$},
    \\[1ex]
    \label{NSCH:11}
    &\psi\vert_{t=0} = \psi_0
    &&\;\;\text{on $\Gamma$},
\end{alignat}
\end{subequations}
In this model, 
 $\mom$ and $\mga$ are nonnegative functions respresenting the mobilities. The positive parameters $\eps$ and $\delta$ are related to the thickness of the diffuse interfaces in the bulk and on the boundary, respectively. The constant $\kappa \ge 0$ acts as a weight for surface diffusion effects. 
Moreover, $\beta\in\R$ satisfies $\beta\abs{\Omega} + \abs{\Gamma} \neq 0$, and $L \in [0,\infty]$.
In contrast to the dynamic boundary conditions \eqref{GNBC} which are of Allen--Cahn type, the dynamic boundary conditions in system \eqref{NSCH} are of Cahn--Hilliard type. More precisely, they can be understood as a convective variant of the dynamic boundary conditions $\big(\eqref{BC:TRACE}, \eqref{DBC:CH}\big)$ presented above. The sharp interface limit corresponding to the system \eqref{NSCH} is to be studied in a subsequent work. 

The system \eqref{NSCH} is associated with the total energy
\begin{align}
    \label{DEF:E}
    E(\v,\phi,\psi) 
    &:= E_\text{kin}(\phi,\v) + E_\text{free}(\phi,\psi) 
\end{align}
where $E_\text{kin}$ is the kinetic energy introduced in \eqref{DEF:EKIN+EBULK:AGG} and $E_\text{free} = E_\text{bulk} + E_\text{surf}$ is the free energy defined in \eqref{DEF:EFREE}.
Sufficiently regular solutions of system \eqref{NSCH} satisfy the \textit{mass conservation law}
\begin{align}
    \label{CONS:MASS}
    \left\{
    \begin{aligned}
    &\beta\intO \phi(t) \dx + \intG \psi(t) \dS 
    = \beta\intO \phi_0 \dx + \intG \psi_0 \dS
    &&\quad\text{if $L\in[0,\infty)$,}
    \\
    &\intO \phi(t) \dx = \intO \phi_0 \dx 
    \quad\text{and}\quad
    \intG \psi(t) \dS = \intG \psi_0 \dS
    &&\quad\text{if $L=\infty$,}
    \end{aligned}
    \right.
\end{align}
and the \textit{energy dissipation law}
\begin{align}
    \label{CONS:ENERGY}
    \begin{aligned}
    \ddt E(\v,\phi,\psi)
    &= - \intO 2\nu(\phi)\, \abs{\D\v}^2 \dx
    - \intG \gamma(\psi) \abs{\v}^2 \dS
    - \intO \mom(\phi) \abs{\Grad \mu}^2 \dx \\
    &\qquad - \intG \mga(\psi) \abs{\Gradg \theta}^2 \dS
    - \h(L) \intG (\beta\theta - \mu)^2 \dS,
    \end{aligned}
\end{align}
where
\begin{align}
\label{DEF:h:INTRO}
    \h(L) = 
    \begin{cases}
        0 &\text{if $L=0$ or $L=\infty$},\\
        L^{-1} &\text{if $L\in(0,\infty)$},
    \end{cases}
\end{align}
as long as the solution exists.

After presenting the model derivation of the system \eqref{NSCH} in Section~\ref{ModelDerivation}, we will study the case of fluids having matched densities, namely $\rho \equiv \trho_1=\trho_2$. This means that the flux $\J$ vanishes. We rewrite the system \eqref{NSCH} as follows
\begin{subequations}
\label{NSCH:S}
\begin{alignat}{2}
    \label{NSCH:S:1}
    & \rho \, \delt \v
    + \rho \, \Div\big( \v \otimes \v \big)
    - \Div\big( 2\nu(\phi) \, \D\v \big)
    + \Grad p
    =- \eps\, \Div\big(\Grad\phi \otimes \Grad\phi \big)
    &&\quad\text{in $Q$}, 
    \\[1ex]
    \label{NSCH:S:2}
    &\Div \, \v = 0
    &&\quad\text{in $Q$}, 
    \\[1ex]
    \label{NSCH:S:3}
    &\delt \phi + \Div(\phi \v )
    = \Div\big( \mom(\phi) \Grad \mu \big)
    &&\quad\text{in $Q$}, 
    \\[1ex]
    \label{NSCH:S:4}
    &\mu = -\eps \Lap \phi + \eps^{-1} F'(\phi)
    &&\quad\text{in $Q$}, 
    \\[1ex]
    \label{NSCH:S:5}
    &\delt \psi + \Divg(\psi \v_\tau ) 
    = \Divg\big( \mga(\psi) \Gradg \theta \big) - \beta \mom(\psi)\, \deln \mu 
    &&\quad\text{on $\Sigma$}, 
    \\[1ex]
    \label{NSCH:S:6}
    &\theta = - \delta\kappa \Lapg \psi + \delta^{-1} G'(\psi)
    + \eps \deln \phi
    &&\quad\text{on $\Sigma$}, 
    \\[1ex]
    \label{NSCH:S:8}
    &\v\cdot \n = 0,
    \qquad
    \phi\vert_\Gamma = \psi, 
    \qquad
    \begin{cases}
        L\mom(\psi) \, \deln \mu = \beta \theta - \mu 
        &\text{if $L\in[0,\infty)$},\\
        \mom(\psi)\, \deln \mu = 0 
        &\text{if $L = \infty$}
    \end{cases}
    &&\quad\text{on $\Sigma$},
    \\[1ex]
    \label{NSCH:S:9}
    & \big[2\nu(\psi) (\D\v\,\n) + \gamma(\psi) \v \big]_\tau
    = \big[ -\psi \Gradg \theta \big]_\tau
    &&\quad\text{on $\Sigma$}, 
    \\[1ex]
    \label{NSCH:S:10}
    &\v\vert_{t=0} = \v_0,\quad \phi\vert_{t=0} = \phi_0
    &&\quad\text{in $\Omega$},
    \\[1ex]
    \label{NSCH:S:11}
    &\psi\vert_{t=0} = \psi_0
    &&\quad\text{on $\Gamma$}.
\end{alignat}
\end{subequations}

%\paragraph{Contents of the paper.}

The model \eqref{NSCH:S} can be regarded as a generalization of the model H with dynamic boundary conditions of Cahn--Hilliard type.
We will focus on the mathematical analysis of system \eqref{NSCH:S} in the case $L=0$ and $\beta>0$ (see system \eqref{NSCH:MD} in Section~\ref{MathAnalysis}). In Theorem \ref{THM:WS}, we first prove the existence of global weak solutions to system \eqref{NSCH:MD} with regular (polynomial-like) potentials $F$ and $G$ in bounded Lipschitz domains (cf. \eqref{ass:const}--\eqref{ass:vis-fric} below for the specific assumptions). Weak solutions satisfy the {\it variational formulation} of the problem \eqref{NSCH:MD} and a {\it weak energy dissipation law} (see \eqref{WS:ENERGY}). A weak solution is obtained as the limit of smooth solutions to suitably regularized problems based on a Faedo--Galerkin scheme, where appropriate compactness properties are derived on the basis of uniform and global estimates gained from the  energy dissipation law \eqref{CONS:ENERGY}. 
However, an ad hoc construction of the solution ansatz is required since both $(\phi,\psi)$ and $(\mu,\theta)$ need to be approximated in the same finite-dimensional subspace in order to show the validity of \eqref{CONS:ENERGY} for such regularized solutions (cf. the change of variables \eqref{CofV}). In comparison with the works in literature \cite{GGM2016, GGW2019, GGP2021}, our argument is rather simple and relies only on a one-level Faedo--Galerkin approximation. Furthermore, it is shown that $(\mu,\theta)\in L^4(0,T; \LL^2)$, which is a novel regularity for NSCH systems (and even CH equations) with regular potentials (cf., for instance, \cite{boyer1999, GalGrasselli2010}).
Next, in Theorem \ref{THM:UNI}, we then establish the uniqueness of the weak solutions in the two dimensional case. This is the first uniqueness result in the literature for a NSCH system with dynamic boundary conditions. It is important to underline that the nonlinear coupling in the NSCH system with dynamic boundary conditions \eqref{NSCH:S} is much stronger than in the model H \eqref{ModelH}. Roughly speaking, this is due to the elastic (surface) term $[\psi \Grad \theta]_\tau$ in \eqref{NSCH:S:9},
in addition to the classical capillarity (bulk) term $-\varepsilon \Div (\nabla \phi \otimes \nabla \phi)$ (also referred to as \textit{Korteweg force}).
More precisely, gradient terms arising from equivalent formulations of the \textit{Korteweg force} (cf. \eqref{Korteweg}) disappear in bulk integrals in view of the incompressibility constraint $\Div \, \v=0$.
Instead, surface integrals involving the product between a gradient function and $\v$ does not vanish since $\Div_\tau \, \v \neq 0$ on $\Sigma$. For this reason, 
the term $[\psi \Grad \theta]_\tau$ cannot be handled as in \cite{GMT2019} through dual estimates. 
Therefore, the appropriate functional to control the difference of two solutions corresponds to the total energy. This functional is capable of reproducing the same cancellation of the nonlinear terms as in the derivation of \eqref{CONS:ENERGY}.
In order to rigorously justify this fact, two additional assumptions need to be imposed: the domain $\Omega$ has a $C^3$-boundary, which is needed to gain further spatial regularity of $(\phi,\psi)$ and to recover the relations of the bulk/surface chemical potentials $(\mu,\theta)$ almost everywhere; the relation between bulk and surface potentials $F(s)=\beta G(s)$ for all $s \in \mathbb{R}$, which enables us to apply a particular chain formula (see Appendix \ref{A}).

\section{Model derivation}\label{ModelDerivation}

In this section we will derive the novel Navier--Stokes--Cahn--Hilliard system with dynamic boundary conditions \eqref{NSCH}.

\subsection{Considerations based on local mass balance laws}
We consider the time evolution of two fluids (indexed with $i=1,2$) in a (sufficiently smooth) domain $\Omega\subset\R^d$ with $d\in\{2,3\}$ on a time interval $[0,T]$ with $T>0$. We write $Q=\Omega\times(0,T)$ and $\Sigma=\Gamma\times(0,T)$, where $\Gamma=\partial\Omega$.
Let $\rho_i:Q\cup\Sigma\to\R$, $i=1,2$, denote the mass densities of these two fluids. 
The local mass balance equations in the bulk read as
\begin{align}
    \label{BL:MASS:B}
    \delt \rho_i + \Div \, \widehat\J_i = 0,
    \quad i=1,2,
    \quad\text{in $Q$.}
\end{align}
Here, $\widehat\J_i$ are the mass fluxes. As we want to allow for a transfer of material between bulk and surface, the local mass balance equation on the boundary is given by
\begin{align}
    \label{BL:MASS:S}
    \delt \rho_i + \Divg \, \widehat\K_i = \beta \widehat\J_i\cdot \n,
    \quad i=1,2,
    \quad\text{on $\Sigma$.}
\end{align}
In this relation, $\widehat\K_i$ stand for the mass fluxes on the boundary. The right-hand side $\beta \widehat\J_i$ describes the transfer of mass between bulk and surface. Here, the constant $\beta\in\R$ acts as a weight of this mass transfer.
Introducing the individual velocities $\v_i:Q\cup\Sigma\to\R^d$, $i=1,2$, which can be expressed as
\begin{align*}
    \v_i = \frac{\widehat\J_i}{\rho_i} \quad\text{in $Q$},
    \qquad
    \v_i = \frac{\widehat\K_i}{\rho_i} \quad\text{on $\Sigma$},
    \quad
    i=1,2,
\end{align*}
the mass balances \eqref{BL:MASS:B} and \eqref{BL:MASS:S} can be rewritten as
\begin{alignat}{2}
    \label{BL:MASS:B'}
    \delt \rho_i + \Div(\rho_i\v_i) &= 0,
    &&\quad i=1,2,
    \quad\text{in $Q$,}
    \\
    \label{BL:MASS:S'}
    \delt \rho_i + \Divg(\rho_i\v_i) &= \beta \widehat\J_i \cdot \n,
    &&\quad i=1,2,
    \quad\text{on $\Sigma$.}
\end{alignat}
Let now $\phi_i:Q\to\R$ and $\psi_i:\Sigma\to\R$, $i=1,2$, denote the volume fractions of the two fluids in the bulk and on the boundary, respectively. 
Provided that each fluid has a constant density $\trho_i$, $i=1,2$, the volume fractions can be identified as
\begin{align}
\label{DEF:PHIPSI*}
    \phi_i = \frac{\rho_i}{\trho_i} \quad\text{in $Q$,}
    \qquad
    \psi_i = \frac{\rho_i}{\trho_i} \quad\text{on $\Sigma$,}
    \quad
    i=1,2.
\end{align}
In particular, this naturally entails the trace relation 
\begin{align}
    \label{BC:PHIPSI}
    \phi\vert_\Sigma = \psi \quad\text{on $\Sigma$.}
\end{align}
Under the assumption that the excess volume is zero, we have
\begin{align}
    \label{CONS:PHIPSI}
    \phi_1 + \phi_2 = 1 \quad\text{in $Q$,}
    \qquad
    \psi_1 + \psi_2 = 1 \quad\text{on $\Sigma$.}
\end{align}
We further set
\begin{align}
\label{DEF:PHIPSI}
    \phi = \phi_2 - \phi_1 \quad\text{in $Q$,}
    \qquad
    \psi = \psi_2 - \psi_1 \quad\text{on $\Sigma$.}
\end{align}
Furthermore, let $\v:Q\cup\Sigma\to\R^d$ denote the volume averaged velocity field associated with the two fluids.
It can be expressed as
\begin{alignat}{2}
    \label{DEF:VAVG:B}
    \v &= \phi_1 \v_1 + \phi_2 \v_2,
    &&\quad i=1,2,
    \quad\text{in $Q$,}
    \\
    \label{DEF:VAVG:S}
    \v &= \psi_1 \v_1 + \psi_2 \v_2,
    &&\quad i=1,2,
    \quad\text{on $\Sigma$.}
\end{alignat}
We assume that both fluids cannot permeate the boundary $\Gamma$, which leads to the condition
\begin{align}
    \label{BC:NP}
    \v_\n = \v\cdot\n = 0
    \quad\text{on $\Sigma$.}
\end{align}
Hence, $\v$ is identical to its tangential component, that is, $\v_\tau = \v - (\v\cdot\n)\n = \v$ on $\Sigma$. We further write
\begin{alignat}{2}
    \J_i &= \widehat\J_i - \rho_i \v,
    &&\quad i=1,2,
    \quad\text{in $Q$,}
    \\
    \K_i &= \widehat\K_i - \rho_i \v,
    &&\quad i=1,2,
    \quad\text{on $\Sigma$,}
\end{alignat}
to denote the mass fluxes relative to the volume averaged velocity. The mass balance equations \eqref{BL:MASS:B'} and \eqref{BL:MASS:S'} can thus be expressed as
\begin{alignat}{2}
    \label{BL:MASS:B*}
    \delt \rho_i + \Div(\rho_i\v) + \Div \, \J_i &= 0,
    &&\quad i=1,2,
    \quad\text{in $Q$,}
    \\
    \label{BL:MASS:S*}
    \delt \rho_i + \Divg(\rho_i\v) + \Divg \, \K_i  &= \beta \J_i \cdot \n,
    &&\quad i=1,2,
    \quad\text{on $\Sigma$.}
\end{alignat}
In this context, $\J_i$ and $\K_i$, $i=1,2$, can be regarded as diffusive flow rates. We now define the total mass density as
\begin{alignat}{2}
    \label{DEF:RHO}
    \rho &= \rho_1 + \rho_2,
    &&\quad i=1,2,
    \quad\text{in $Q\cup\Sigma$.}
\end{alignat}
This leads to the relations
\begin{alignat}{2}
    \label{BL:MASS:B**}
    \delt \rho + \Div(\rho\v) + \Div(\J_1+\J_2)&= 0
    &&\quad\text{in $Q$,}
    \\
    \label{BL:MASS:S**}
    \delt \rho + \Divg(\rho\v) + \Divg(\K_1+\K_2) &= \beta (\J_1 + \J_2) \cdot \n
    &&\quad\text{on $\Sigma$.}
\end{alignat}
Next, assuming a conservation of linear momentum with respect to the velocity field $\v$, we have
\begin{align}
    \label{BL:CLM}
    \delt (\rho\v) + \Div(\rho\v\otimes\v) = \Div \, \tT 
    \quad\text{in $Q$.}
\end{align}
Here, $\tT$ denotes the stress tensor that needs to be specified by constitutive assumptions. In the following, we write 
\begin{align}
\label{DEF:JK}
    \J = \J_1 + \J_2 \quad\text{in $Q$,}
    \qquad
    \K = \K_1 + \K_2 \quad\text{on $\Sigma$.}
\end{align}
Using \eqref{BL:MASS:B**} to rewrite \eqref{BL:CLM}, we obtain
\begin{align}
\label{BL:CLM*}
\begin{aligned}
    \rho(\delt\v + \v\cdot\Grad\v) 
    &= \Div \, \tT + ( \Div \, \J )\v
    \\
    &= \Div(\tT + \v\otimes \J) - \Grad\v \, \J
    \quad\text{in $Q$.}
    \end{aligned}
\end{align}
This allows us to define the objective tensor
\begin{align}
    \label{DEF:TT}
    \T = \tT + \v \otimes \J
    \quad\text{in $Q$,}
\end{align}
which is frame indifferent. Hence, \eqref{BL:CLM*} can be rewritten as
\begin{align}
\label{BL:CLM**}
    \rho \delt\v + \Grad\v \, (\rho\v + \J) 
    = \Div\, \T 
    \quad\text{in $Q$.}
\end{align}
We now multiply \eqref{BL:MASS:B'} by ${1}/{\trho_i}$, $i=1,2$, and we add the resulting equations. Recalling \eqref{CONS:PHIPSI} and \eqref{DEF:VAVG:B}, this yields
\begin{align}
    \label{PDE:DIVV}
    \begin{aligned}
    \Div \, \v &= \delt(\phi_1 + \phi_2) + \Div \, \v  
    \\
    &= \delt \left(\frac{\rho_1}{\trho_1} + \frac{\rho_2}{\trho_2}\right)
        + \Div \left(\frac{\rho_1}{\trho_1}\v_1 + \frac{\rho_2}{\trho_2}\v_2 \right)
    =0
    &&\quad\text{in $Q$,}
    \end{aligned}
\end{align}
which means that the volume averaged velocity field is divergence-free. 
Multiplying \eqref{BL:MASS:S'} by ${1}/{\trho_i}$, $i=1,2$, and adding the resulting equations, we infer
\begin{align}
    \label{PDE:DIVV:S}
    \begin{aligned}
    \Divg \, \v &= \delt(\psi_1 + \psi_2) + \Divg \, \v 
    \\
    &= \delt \left(\frac{\rho_1}{\trho_1} + \frac{\rho_2}{\trho_2}\right)
        + \Divg \left(\frac{\rho_1}{\trho_1}\v_1 + \frac{\rho_2}{\trho_2}\v_2 \right)
    \\
    &= \beta\left(\frac{\J_1}{\trho_1} + \frac{\J_2}{\trho_2}\right)\cdot\n 
    &&\quad\text{on $\Sigma$.}
    \end{aligned}
\end{align}
We now define 
\begin{alignat}{4}
    \label{DEF:TJ}
    \tJ_i &= \frac{\J_i}{\trho_i}, 
    &&\quad i=1,2,
    &\qquad \J_\phi &= \tJ_2-\tJ_1
    &&\quad\text{in $Q$,}
    \\[1ex]
    \label{DEF:TK}
    \tK_i &= \frac{\K_i}{\trho_i},
    &&\quad i=1,2,
    &\qquad \K_\psi &= \tK_2-\tK_1
    &&\quad\text{on $\Sigma$.}
\end{alignat}
We next multiply \eqref{BL:MASS:B*} by ${1}/{\trho_i}$, $i=1,2$. Subtracting the resulting equations, we deduce that
\begin{align}
    \delt \phi + \Div(\phi\v) + \Div \, \J_\phi = 0
    \quad\text{in $Q$.}
\end{align}
Moreover, multiplying \eqref{BL:MASS:B*} by ${1}/{\trho_i}$, adding the resulting equations, and recalling \eqref{CONS:PHIPSI} and \eqref{PDE:DIVV}, we conclude that
\begin{align}
    \label{ID:DIVJ}
    \Div(\tJ_1 + \tJ_2) 
    = \delt(\phi_1+\phi_2) + \Div\big((\phi_1+\phi_2)\v \big) + \Div(\tJ_1 + \tJ_2) 
    = 0
    \quad\text{in $Q$.}
\end{align}
Proceeding similarly with \eqref{BL:MASS:S*}, and recalling \eqref{CONS:PHIPSI} and \eqref{PDE:DIVV:S}, we obtain
\begin{align}
    \delt \psi + \Divg(\psi\v) + \Divg \, \K_\psi = \beta \J_\phi \cdot \n
    \quad\text{on $\Sigma$,}
\end{align}
and
\begin{align}
    \label{ID:DIVK}
    \begin{aligned}
    \Divg(\tK_1 + \tK_2) 
    &= \delt(\psi_1+\psi_2) + \Divg\big((\psi_1+\psi_2)\v \big)
    \\
    &\quad + \Divg(\tK_1 + \tK_2) - \beta (\tJ_1+\tJ_2)\cdot \n
    = 0
    \quad\text{on $\Sigma$.}
    \end{aligned}
\end{align}
Using \eqref{DEF:TJ} and \eqref{ID:DIVJ}, we infer that
\begin{align}
    \label{ID:DIVJ**}
    \begin{aligned}
    \Div \, \J 
    &= \frac 12 \bigg[ \trho_2 \Div \, \tJ_2 
            + \trho_1 \Div \, \tJ_1 \bigg]
        + \frac 12\bigg[ \trho_2 \Div \, \tJ_2 
            + \trho_1 \Div \, \tJ_1 \bigg]
    \\[1ex]
    &= \frac{\trho_2-\trho_1}{2} \Div \, \tJ_2
        - \frac{\trho_2-\trho_1}{2} \Div \, \tJ_1
    = \frac{\trho_2-\trho_1}{2} \Div \, \J_\phi
    &&\quad\text{in $Q$.}
    \end{aligned}
\end{align}
This means that $\J$ and $\tfrac 12 (\trho_2-\trho_1) \J_\phi $ can merely differ by an additive divergence-free function. As in \cite{AGG}, we thus assume that
\begin{align}
    \label{ID:J}
    \J = \frac{\trho_2-\trho_1}{2} \J_\phi 
    \quad\text{in $Q$.}
\end{align}
In the same fashion, we derive the identity
\begin{align}
    \Divg \, \K = \frac{\trho_2-\trho_1}{2} \Divg \, \K_\psi
    \quad\text{on $\Sigma$.}
\end{align}
From \eqref{CONS:PHIPSI}, \eqref{DEF:PHIPSI} and \eqref{DEF:RHO}, assuming that $\phi$ and $\psi$ only attain values in $[-1,1]$, we further infer that the density $\rho = \rho(\phi)$ is given as
\begin{align}
    \label{DEF:RHO*}
    \rho(\phi) = \frac{\trho_2-\trho_1}{2} \phi + \frac{\trho_2+\trho_1}{2}
    \quad\text{in $Q$,}
\end{align}
and due to \eqref{BC:PHIPSI}, the trace $\rho(\phi)\vert_\Sigma$ can be expressed as
\begin{align}
    \label{DEF:RHO:S*}
    \rho(\phi)\vert_\Sigma =
    \rho(\psi) = \frac{\trho_2-\trho_1}{2} \psi + \frac{\trho_2+\trho_1}{2}
    \quad\text{on $\Sigma$.}
\end{align}
Plugging \eqref{DEF:RHO*} into \eqref{BL:MASS:B**}, and \eqref{DEF:RHO:S*} into \eqref{BL:MASS:S**}, and using the identities \eqref{ID:DIVJ} and \eqref{ID:DIVK}, we eventually derive the equations
\begin{alignat}{2}
    \label{EQ:PHI}
    \delt \phi + \Div(\phi\v) + \Div \, \J_\phi&= 0
    &&\quad i=1,2,
    \quad\text{in $Q$,}
    \\
    \label{EQ:PSI}
    \delt \psi + \Divg(\psi\v) + \Divg \, \K_\psi &= \beta \J_\phi \cdot \n
    &&\quad i=1,2,
    \quad\text{on $\Sigma$.}
\end{alignat}

\subsection{Local energy dissipation laws}

Proceeding as in \cite{AGG}, we introduce the following energy density in the bulk:
\begin{align}
    \label{DEF:END:B}
    e_\Om(\v,\phi,\Grad\phi) 
    = \frac{\rho(\phi)}{2}\abs{\v}^2
        + f(\phi,\Grad\phi)
    \quad\text{in $Q$.}
\end{align}
The first summand on the right-hand side stands for the kinetic energy density whereas the second summand denotes the free energy density in the bulk.
On the boundary, we introduce the additional energy density
\begin{align}
    \label{DEF:END:S}
    e_\Ga(\psi,\Gradg\psi) 
    = g(\psi,\Gradg\psi)
    \quad\text{on $\Sigma$,}
\end{align}
where $g$ stands for the free energy density on the surface. 

We now consider an arbitrary test volume $V(t)\subset \Omega$,
$t\in[0,T]$, that is transported by the flow associated with $\v$. Let $\nu$ denote the outer unit normal vector field of $V(t)$.
In an isothermal situation, the second law of thermodynamics leads to the dissipation inequality
\begin{align}
    \label{IEQ:DISS}
    \begin{aligned}
    0 &\ge \ddt \left[ \int_{V(t)} e_\Om(\v,\phi,\Grad\phi) \dx
        + \int_{\del V(t)\cap \Ga} e_\Ga(\psi,\Gradg\psi) \dS \right]
    \\
    &\qquad + \int_{\del V(t)\cap \Omega} \J_e \cdot \nu \dS
    + \int_{\del_\Ga (\del V(t)\cap \Gamma)} \K_e \cdot \nu_\Ga \dS_\Ga.
    \end{aligned}
\end{align}
Here $\J_e$ and $\K_e$ are energy fluxes that will be specified later, \revised{and we assume that $\K_e\cdot \n = 0$.} As we consider a closed system, there is no transfer of energy over the boundary $\Ga$ and thus, the domain of the first integral in the second line is just $\del V(t)\cap \Omega$ instead of $\del V(t)$. We further point out that $\del_\Ga (\del V(t)\cap \Gamma) \subseteq \Ga$ is to be understood as the relative boundary of the set $\del V(t)\cap \Gamma$ within the submanifold $\Ga$, and $\nu_\Ga$ stands for the corresponding outer unit normal vector field. Applying Gau\ss's divergence theorem on both integrals in the second line, and recalling that $\nu=\n$ on $\del V(t)\cap \Gamma$, we reformulate \eqref{IEQ:DISS} as
\begin{align}
    \label{IEQ:DISS*}
    \begin{aligned}
    0 &\ge \ddt \left[ \int_{V(t)} e_\Om(\v,\phi,\Grad\phi) \dx
        + \int_{\del V(t)\cap \Ga} e_\Ga(\psi,\Gradg\psi) \dS \right]
    \\
    &\qquad + \int_{V(t)} \Div \, \J_e \dx
    + \int_{\del V(t)\cap \Gamma} \Divg \, \K_e - \J_e\cdot\n \dS.
    \end{aligned}
\end{align}
Applying the Reynolds transport theorem, we find that
\begin{align}
    \label{TRANSP:B}
    \ddt \int_{V(t)} e_\Om(\v,\phi,\Grad\phi) \dx
    = \int_{V(t)} \delt e_\Om(\v,\phi,\Grad\phi) 
        + \Div\big(e_\Om(\v,\phi,\Grad\phi) \v \big) \dx.
\end{align}
Similarly, using the transport theorem for evolving hypersurfaces (see, e.g., \cite[Theorem~32]{BGN-book}), we obtain
\begin{align}
    \label{TRANSP:S}
    \ddt \int_{\del V(t)\cap \Ga} e_\Ga(\psi,\Gradg\psi) \dS
    = \int_{V(t)\cap \Ga} \delt e_\Ga(\psi,\Gradg\psi) 
        + \Divg\big(e_\Ga(\psi,\Gradg\psi) \v \big) \dS.
\end{align}
Combining \eqref{IEQ:DISS*}, \eqref{TRANSP:B} and \eqref{TRANSP:S}, we thus get
\begin{align}
    \label{IEQ:DISS**}
    \begin{aligned}
    0 &\ge \int_{V(t)} 
        \delt e_\Om(\v,\phi,\Grad\phi) 
        + \Div\big(e_\Om(\v,\phi,\Grad\phi) \v \big)
        + \Div \, \J_e \dx
    \\
    &\quad+ \int_{\del V(t)\cap \Gamma} 
        \delt e_\Ga(\psi,\Gradg\psi) 
        + \Divg\big(e_\Ga(\psi,\Gradg\psi) \v \big)
        +\Divg \, \K_e - \J_e\cdot\n \dS.
        \end{aligned}
\end{align}
In particular, the above inequality holds true for all test volumes $V(t)\subset \Om$ with $\del V(t) \cap \Ga = \emptyset$. We thus infer the local dissipation law in the bulk, which reads as
\begin{align}
    \label{IEQ:DISS:LOC:B}
    0 \ge -\mathcal D_\Om 
    := \delt e_\Om(\v,\phi,\Grad\phi) 
        + \Div\big(e_\Om(\v,\phi,\Grad\phi) \v \big)
        + \Div \, \J_e 
    \quad\text{in $Q$.}
\end{align}
Let now $\alpha>0$ be arbitrary and suppose that $V(t)\subset \Om$ is a test volume with $\abs{V(t)}$ being sufficiently small such that 
\begin{align*}
    \int_{V(t)} \mathcal D_\Om \dx < \alpha.
\end{align*}
Then, we use \eqref{IEQ:DISS**} to infer that
\begin{align*}
    \int_{\del V(t)\cap \Gamma} 
        \delt e_\Ga(\psi,\Gradg\psi) 
        + \Divg\big(e_\Ga(\psi,\Gradg\psi) \v \big)
        +\Divg \, \K_e - \J_e\cdot\n \dS
    <\alpha.
\end{align*}
Since $\alpha>0$ and the test volume $V(t)$ were arbitrary (except for the above restriction on $V(t)$), we conclude the following local dissipation law on the boundary:
\begin{align}
    \label{IEQ:DISS:LOC:S}
    0 \ge -\mathcal D_\Ga
    := \delt e_\Ga(\psi,\Gradg\psi) 
        + \Divg\big(e_\Ga(\psi,\Gradg\psi) \v \big)
        +\Divg \, \K_e - \J_e\cdot\n 
    \quad\text{on $\Sigma$.}
\end{align}

\subsection{Completion of the model derivation by the Lagrange multiplier approach}

We now complete the model derivation by means of the \textit{Lagrange multiplier approach}. We introduce the functions $\mu$ and $\theta$ that will be fixed in the subsequent approach. In view of the identities \eqref{EQ:PHI} and \eqref{EQ:PSI}, the local energy dissipation laws \eqref{IEQ:DISS:LOC:B} and \eqref{IEQ:DISS:LOC:S} can be written as
\begin{alignat}{2}
    \label{IEQ:DISS:B:0}
    0 \ge -\mathcal D_\Om 
    &= \delt e_\Om(\v,\phi,\Grad\phi) 
        + \Div\big(e_\Om(\v,\phi,\Grad\phi) \v \big)
        + \Div \, \J_e 
    \notag\\
    &\quad - \mu \big( 
        \delt \phi 
        + \Div(\phi\v) 
        + \Div \, \J_\phi \big)
    &&\quad\text{in $Q$,}
    \\[1ex]
    \label{IEQ:DISS:S:0}
    0 \ge - \mathcal D_\Ga
    &= \delt e_\Ga(\psi,\Gradg\psi) 
        + \Divg\big(e_\Ga(\psi,\Gradg\psi) \v \big)
        +\Divg \, \K_e - \J_e\cdot\n
    \notag\\
    &\quad- \theta \big( 
        \delt \psi 
        + \Divg(\v \psi)
        + \Divg \, \K_\psi 
        - \beta \J_\phi\cdot\n 
        \big)
    &&\quad\text{on $\Sigma$.}
\end{alignat}
Here, the functions $\mu$ and $\theta$ can be understood as Lagrange multipliers.

In the following, for brevity, we will just write $\rho$, $f$ and $g$ instead of $\rho(\phi)$, $f(\phi,\Grad\phi)$ and $g(\psi,\Gradg\psi)$.
By the definition of the energy densities $e_\Om$ and $e_\Ga$ (see \eqref{DEF:END:B} and \eqref{DEF:END:S}), we rewrite \eqref{IEQ:DISS:B:0} and \eqref{IEQ:DISS:S:0} as
\begin{alignat}{2}
    \label{IEQ:DISS:B:1}
    0 \ge -\mathcal D_\Om 
    &= \delt \big(\tfrac 12 \rho \abs{\v}^2 \big)
        + \Div\big(\tfrac 12 \rho \abs{\v}^2 \v \big)
        + \delt f + \Div(f\v)
        + \Div \, \J_e
    \notag\\
    &\quad - \mu \big( 
        \delt \phi 
        + \Div(\phi\v) 
        + \Div \, \J_\phi \big)
    &&\quad\text{in $Q$,}
    \\[1ex]
    \label{EST:DISS:S:1}
    0 \ge - \mathcal D_\Ga
    &= \delt g 
        + \Divg(g \v)
        + \Divg \, \K_e 
        - \J_e \cdot \n
    \notag\\
    &\quad- \theta \big( 
        \delt \psi 
        + \Divg(\psi \v)
        + \Divg \, \K_\psi 
        - \beta \J_\phi\cdot\n 
        \big)
    &&\quad\text{on $\Sigma$.}
\end{alignat}
By the chain rule, the derivatives 
$\delt f$ and $\delt g$
%$\tfrac{\mathrm d}{\mathrm dt} f$ and %$\tfrac{\mathrm d}{\mathrm dt} g$ 
can be expressed as
\begin{alignat}{2}
    \label{ID:DDTF}
%    \tfrac{\mathrm d}{\mathrm dt} f 
    \delt f
    &= \delph f \, \delt \phi + \delgph f\, \delt \Grad\phi
    &&\quad\text{in $Q$,}
    \\
    \label{ID:DDTG}
    \delt g
    %\tfrac{\mathrm d}{\mathrm dt} g 
    &= \delps g \, \delt \psi + \delgps g\, \delt \Gradg\psi
    &&\quad\text{on $\Sigma$.}
\end{alignat}
In the following, for any function $h:Q\to\R$, we use the notation
\begin{align*}
    D_t h = \delt h + (v\cdot \Grad) h
\end{align*}
to denote its material derivative.
In the bulk, we proceed exactly as in \cite{AGG} to reformulate \eqref{IEQ:DISS:B:1} as
\begin{align} 
    \label{EST:DISS:B:2}
    \begin{aligned}
    0 & \ge \Div\big[\J_e 
            - \tfrac 12 \abs{\v}^2 \J 
            + \T^\top\v 
            - \mu \J_\phi
            + \delgph f D_t \phi
        \big]
    \\
    & \quad + \big[ \delph f 
            - \Div(\delgph f) 
            - \mu 
        \big]
        D_t \phi    
    \\
    & \quad - \big[ \T 
            + \Grad\phi \otimes \delgph f 
            \big] : \Grad\v
        + \Grad\mu \cdot \J_\phi
    &&\quad\text{in $Q$.}
    \end{aligned}
\end{align}
To ensure \eqref{EST:DISS:B:2}, we now choose the chemical potential $\mu$ and the energy flux $\J_e$ as
\begin{alignat}{2}
    \label{DEF:MU*}
    \mu &= \delph f 
        - \Div\big( \delgph f \big)
    &&\quad\text{in $Q$,}
    \\
    \label{DEF:JE}
    \J_e &= \tfrac 12 \abs{\v}^2 \J 
            - \T^\top\v 
            + \mu \J_\phi
            - \delgph f D_t \phi
    &&\quad\text{in $Q$.}
\end{alignat}
By these choices, the first two lines of the right-hand side in \eqref{EST:DISS:B:2} vanish.
We further assume the mass flux $\J_\phi$ to be of Fick's type, that is
\begin{align}
    \label{DEF:JPHI}
    \J_\phi = -\mom(\phi) \Grad\mu \quad\text{in $Q$,}
\end{align}
where $\mom = \mom(\phi)$ is a nonnegative function representing the mobility.
Hence, \eqref{EST:DISS:B:2} reduces to
\begin{align}
    \label{EST:DISS:B:3}
    0  \ge  - \big[ \T 
            + \Grad\phi \otimes \delgph f 
            \big] : \Grad\v
        - \mom(\phi) \abs{\Grad\mu}^2
    \quad\text{in $Q$.}
\end{align}
We now define the tensor
\begin{align*}
    \mathbf S = \T + p \mathbf I 
        + \Grad\phi \otimes \delgph f
    \quad\text{in $Q$.}
\end{align*}
Here, the variable $p$ denotes the pressure, and $\mathbf I$ denotes the identity matrix. $\mathbf S$ is the viscous stress tensor that corresponds to irreversible changes of the energy due to friction. For Newtonian fluids, $\mathbf S$ is usually chosen as
\begin{align}
    \mathbf S = 2 \nu(\phi) \D\v,
\end{align}
where $\nu=\nu(\phi)$ is a nonnegative function representing the viscosity of the fluids, and $\D\v$ is the symmetric gradient of $\v$. By this choice, \eqref{EST:DISS:B:3} is satisfied since we obtain 
\begin{align}
    \label{EST:DISS:B:4}
    0  \ge  - \nu(\phi) \abs{\D\v}^2
        - \mom(\phi) \abs{\Grad\mu}^2
    \quad\text{in $Q$,}
\end{align}
by means of the identity $p\mathbf I:\Grad \v = p\, \Div \, \v  = 0$ in $Q$.

We now assume that the energy density $f$ is of Ginzburg--Landau type, that is
\begin{align}
    \label{DEF:GL:B}
    f(\phi,\Grad\phi)
    = \frac{\eps}{2}\abs{\Grad\phi}^2 + \frac 1\eps F(\phi)
    \quad\text{in $Q$.}
\end{align}
Here, the parameter $\eps>0$ is related to the thickness of the diffuse interface that separates the two fluids, and $F$ is a potential that usually exhibits a double-well structure. Hence, the chemical potential $\mu$ reads as
\begin{align}
    \label{PDE:MU}
    \mu =  - \eps \Lap\phi + \frac 1\eps F'(\phi)
    \quad\text{in $Q$,}
\end{align}
and the total stress tensor $\T$ is given by
\begin{align}
    \label{DEF:T}
    \begin{aligned}
    \T &= \mathbf S - p\mathbf I - \Grad\phi \otimes \delgph f
    \\
    &= 2 \nu(\phi) \D\v 
    - p \mathbf I 
    - \eps \Grad\phi \otimes \Grad\phi
    \quad\text{in $Q$.}
    \end{aligned}
\end{align}
Plugging \eqref{DEF:JPHI} into \eqref{EQ:PHI}, we further obtain the equation
\begin{align}
    \label{PDE:PHI}
    \delt\phi + \Div(\phi\v) = \Div\big( \mom(\phi) \Grad\mu \big)
    \quad\text{in $Q$.}
\end{align}
Eventually, recalling \eqref{DEF:TT} and \eqref{DEF:T}, we use \eqref{BL:CLM} to derive the equation
\begin{align}
    \label{PDE:NS}
    \delt (\rho\v) 
    + \Div\big(\v\otimes(\rho(\phi)\v + \J )\big) 
    - \Div\big(2\nu(\phi)\D\v\big)
    + \Grad p
    = - \eps\Div(\Grad\phi\otimes\Grad\phi)
    \quad\text{in $Q$,}
\end{align}
where
\begin{align}
    \label{PDE:J}
    \J = - \frac{\trho_2-\trho_1}{2} \mom(\phi) \Grad\mu 
    \quad\text{in $Q$.}
\end{align}

We now consider the local energy dissipation law \eqref{EST:DISS:S:1} on the boundary.
Recalling the formulas for $\J_e$ (see \eqref{DEF:JE}) and $\T$ (see \eqref{DEF:T}), we infer that
\begin{align} \label{EST:DISS:S:2}
\begin{aligned}
    0 & \ge \delps g\, \delt \psi
        + \Divg\big( \delgps g \delt \psi \big)
        - \Divg\big(\delgps g\big) \delt \psi
        + \Divg(g\v) + \Divg \, \K_e
    \\
    &\quad - \tfrac 12 (\J \cdot \n) \, (\v\cdot\v) 
        + 2\nu(\phi) 
        \big(\D\v \,\n \big)\cdot \v
        - \eps (\Grad\phi\otimes \Grad\phi)\n \cdot \v
        - \mu \J_\phi\cdot \n
    \\
    &\quad  
        + (\delgph f \cdot \n)  \delt \psi    
        + (\delgph f \cdot \n)  (\v \cdot \Gradg\psi)
        - \theta \delt \psi - \Divg(\theta \psi \v)
        + \psi \Gradg \theta \cdot \v
    \\
    &\quad 
        - \Divg(\K_\psi \theta)
        + \Gradg \theta\, \K_\psi 
        + \beta\theta \J_\phi\cdot \n
    \\[1ex]
    &= \Divg\big[ \K_e
        + \delgps g \delt \psi
        + g\v 
        - \theta \psi \v 
        - \theta \K_\psi 
    \big]
    \\
    &\quad 
    + \big[
        \delps g 
        - \Divg\big(\delgps g\big) 
        + \delgph f \cdot \n
        - \theta
    \big] \delt\psi
    \\
    &\quad + \big[
        2\nu(\psi) (\D\v \, \n)
        + \psi \Gradg \theta
        - \tfrac 12 (\J \cdot \n) \v
    \big] \cdot \v 
    \\
    &\quad 
        + (\delgph f \cdot \n)  (\v \cdot \Gradg\psi)
        - \eps (\Grad\phi\otimes \Grad\phi)\n \cdot \v
    \\
    &\quad
        + \Gradg \theta\, \K_\psi 
        + (\beta\theta - \mu) \J_\phi\cdot \n
        \end{aligned}
\end{align}
on $\Sigma$. 
Here, we have used that 
\begin{align}
\label{CONV-EQ}
    \v\cdot \Gradg \psi 
    = \v\cdot \Gradg \phi
    = \v\cdot \big[ \Grad\phi - \n(\Grad\phi\cdot \n) \big]
    = \v\cdot \Grad\phi 
    \quad\text{on $\Sigma$}
\end{align}
due to the definition of the surface gradient and the boundary condition \eqref{BC:NP}.
In order to ensure that the inequality \eqref{EST:DISS:S:2} is fulfilled, we choose the chemical potential $\theta$, the mass flux $\K_\psi$, and the energy flux $\K_e$ as follows:
\begin{alignat}{2}
    \label{DEF:THETA*}
    \theta &= \delps g 
        - \Divg\big( \delgps g \big)
        + \delgph f \cdot \n
    &&\quad\text{on $\Sigma$,}
    \\
    \label{DEF:KPSI}
    \K_\psi &= - \mga(\psi) \Gradg \theta
    &&\quad\text{on $\Sigma$,}
    \\
    \label{DEF:KE}
    \K_e &= - \delgps g D_t \psi
        + \theta \psi \v
        + \theta \K_\psi
        - g \v
    &&\quad\text{on $\Sigma$.}
\end{alignat}
\revised{To ensure the condition $K_e\cdot\n = 0$ on $\Sigma$, we need to assume at this point that $\delgps g \cdot \n = 0$ on $\Sigma$.}
In \eqref{DEF:KPSI}, $\mga=\mga(\psi)$ is a nonnegative function representing the mobility.
This means that the first two lines of the right-hand side in \eqref{EST:DISS:S:2} vanish. 
Moreover, since $\v_\n=0$ (see \eqref{BC:NP}), we have $\v=\v_\tau$ and thus,
\begin{align*}
    &\big[
        2\nu(\psi) (\D\v \, \n)
        %+ \gamma(\psi) \v
        + \psi \Gradg \theta
        - \tfrac 12 (\J \cdot \n) \v
    \big] \cdot \v 
    \\
    & = \big[
        2\nu(\psi) (\D\v \, \n)
        %+ \gamma(\psi) \v
        + \psi \Gradg \theta
        - \tfrac 12 (\J \cdot \n) \v
    \big]_\tau \cdot \v_\tau 
\end{align*}
Therefore, in order to ensure that \eqref{EST:DISS:S:2} is satisfied, we make the constitutive assumption
\begin{align} \label{BC:NAV}
    \big[
        2\nu(\psi) (\D\v \, \n)
        %+ \gamma(\psi) \v
        + \psi \Gradg \theta
        - \tfrac 12 (\J \cdot \n) \v
    \big]_\tau 
    = - \gamma(\psi) \v_\tau .
\end{align}
where $\gamma=\gamma(\psi)$ is a nonnegative function. 
This equation can be regarded as an inhomogeneous Navier slip boundary condition.
We now assume that the energy density $g$ is also of Ginzburg--Landau type, that is
\begin{align}
    \label{DEF:GL:S}
    g(\psi,\Gradg \psi) = \frac{\kappa\delta}{2}\abs{\Gradg\psi}^2
    + \frac 1\delta G(\psi)
    \quad\text{on $\Sigma$.}
\end{align}
Here, $\delta>0$ is related to the thickness of the diffuse interface on the boundary, $\kappa\ge 0$ acts as a weight for surface diffusion effects, and $G$ is a potential that usually exhibits a double-well structure. Hence, recalling the definition of the energy density $f$ (see \eqref{DEF:GL:B}), the chemical potential $\theta$ is given as
\begin{align} 
\label{PDE:THETA}
    \theta = -\kappa\delta \Lapg \psi + \frac 1\delta G'(\psi) - \eps\deln \phi
    \quad\text{on $\Sigma$.}
\end{align}
Thanks to \eqref{CONV-EQ}, we further have
\begin{align}
    (\delgph f \cdot \n)  (\v \cdot \Gradg\psi)
    = \eps (\Grad\phi \cdot \n) (\v \cdot \Grad \phi)
    = \eps (\Grad\phi \otimes \Grad\phi)\n \cdot \v
    \quad\text{on $\Sigma$.}
\end{align}
Hence, \eqref{EST:DISS:S:2} reduces to 
\begin{align} \label{EST:DISS:S:3}
    0 &\ge 
        - \gamma(\psi)\abs{\v}^2 
        - \mga(\psi) \abs{\Gradg \theta}^2
        - (\beta\theta - \mu)\,  \mom(\phi) \Grad\mu \cdot \n
    \quad\text{on $\Sigma$.}
\end{align}
The first two terms on the right-hand side are clearly nonpositive as $\gamma(\psi)$ and $\mga(\psi)$ are nonnegative. This means that \eqref{EST:DISS:S:3} is fulfilled if one of the following boundary conditions holds:
\begin{subequations} 
\label{BC:MUTHETA}
\begin{alignat}{2}
    \beta\theta - \mu &= 0
    &&\quad\text{on $\Sigma$,}
    \\
    \mom(\phi) \Grad\mu \cdot \n &= \tfrac 1L (\beta\theta - \mu)
    &&\quad\text{on $\Sigma$ for a constant $L\in(0,\infty)$,}
    \\
    \mom(\phi) \Grad\mu \cdot \n &= 0
    &&\quad\text{on $\Sigma$.}
\end{alignat}
\end{subequations}
Finally, by substituting \eqref{DEF:JPHI} and \eqref{DEF:KPSI} into \eqref{EQ:PSI}, we obtain
\begin{align}
    \label{PDE:PSI}
    \delt \psi + \Divg(\psi\v) = \Divg(\mga(\psi) \Gradg \theta) - \beta \mom(\psi) \Grad\mu\cdot\n
    \quad\text{on $\Sigma$.}
\end{align}

Collecting \eqref{PDE:NS}, \eqref{PDE:DIVV}, \eqref{PDE:J}, \eqref{PDE:PHI}, \eqref{PDE:MU}, \eqref{PDE:PSI}, \eqref{PDE:THETA}, \eqref{BC:NP}, \eqref{BC:PHIPSI}, \eqref{BC:MUTHETA} and \eqref{BC:NAV}, and imposing the initial conditions
\begin{align}
    \label{IC:VPHIPSI}
    \v\vert_{t=0} = \v_0,
    \quad
    \phi\vert_{t=0} = \phi_0
    \quad\text{in $\Om$},
    \qquad
    \psi\vert_{t=0} = \psi_0
    \quad\text{on $\Sigma$,}
\end{align}
for prescribed initial data $\v_0$, $\phi_0$ and $\psi_0$, we have thus derived the system \eqref{NSCH}.

\section{Mathematical analysis: the case of matched densities}
\label{MathAnalysis}

\subsection{Notation and preliminaries}

We fix some notation and assumptions that are supposed to hold throughout the remainder of this paper.

\paragraph{Notation.}
\begin{enumerate}[label=$(\mathrm{N \arabic*})$, ref = $\mathrm{N \arabic*}$]
\item The set of natural numbers excluding zero is denoted by $\N$, and we write $\N_0 = \N\cup\{0\}$.
\item Let $\Omega$ be a bounded Lipschitz domain in $\R^d$ with $d\in \left\lbrace 2,3 \right\rbrace$. We write $\Gamma:=\partial\Omega$.
For any real numbers $k\geq 0$ and $p \in [1, \infty]$, the Lebesgue and Sobolev spaces for functions defined on $\Omega$ with values in $\R$  are denoted as $L^p(\Omega)$ and $W^{k,p}(\Omega)$. We write $\norm{\cdot}_{L^p(\Omega)}$ and $\norm{\cdot}_{W^{k,p}(\Omega)}$ to denote the standard norms on these spaces. If $p = 2$, we use the notation $H^k(\Omega) = W^{k,2}(\Omega)$. We point out that $H^0(\Omega)$ can be identified with $L^2(\Omega)$. Analogously, the 
Lebesgue and Sobolev spaces on $\Gamma$ are denoted by $L^p(\Gamma)$ and $W^{k,p}(\Gamma)$ with corresponding norms $\norm{\cdot}_{L^p(\Gamma)}$ and $\norm{\cdot}_{W^{k,p}(\Gamma)}$, respectively. 
In the case of vector fields on $\Omega$ with values in $\R^n$ for some $n\in\N$ with $n>1$, we use the notation $\bL^p(\Omega)$, $\mathbf{W}^{k,p}(\Omega)$ and $\bH^k(\Omega)$. For simplicity, their norms are denoted as in the scalar case by $\norm{\cdot}_{L^p(\Omega)}$,  $\norm{\cdot}_{W^{k,p}(\Omega)}$ and $\norm{\cdot}_{H^{k}(\Omega)}$, respectively.

Let $I$ be a closed interval in $\R$ and $X$ be a Banach space. The space $C(I;X)$ denotes the set of continuous functions from $I$ to $X$ and, for $k\in\N$, $C^k(I;X)$ denotes the space of $k$-times continuously differentiable functions from $I$ to $X$. In particular, we simply write $C(I)$ and $C^k(I)$ if $X=\R$. Moreover, $C_w(I;X)$ denotes the space of functions mapping from $I$ to $X$ which are continuous on $I$ with respect to the weak topology on $X$. This means that for any function $f\in C_w(I;X)$ and every sequence $(t_k)_{k\in\N}$ in $I$ with $t_k\to t$ in $I$ as $k\to\infty$, it holds $f(t_k) \to f(t)$ weakly in $X$ as $k\to\infty$. Furthermore, for any real numbers $k \geq 0$ and $p \in [1, \infty]$, the Bochner spaces of functions defined on an interval $I$ in $\R$ with values in $X$ are denoted by $L^p(I;X)$ and $W^{k,p}(I;X)$. 

\item For any Banach space $X$, we write $X'$ to denote its dual space. The associated duality pairing of elements $\phi\in X'$ and $\zeta\in X$ is denoted as $\ang{\phi}{\zeta}_X$. If $X$ is a Hilbert space, we write $(\cdot, \cdot)_X$ to denote its inner product. 
\item For any bounded Lipschitz domain $\Omega$, we define
\begin{align*}
\mean{u}_\Omega := \begin{cases}
\frac{1}{\abs{\Omega}} \ang{u}{1}_{H^1(\Omega)} & \text{ if } u \in H^1(\Omega)', \\
\frac{1}{\abs{\Omega}} \int_\Omega u \dx & \text{ if } u \in L^1(\Omega)
\end{cases}
\end{align*}
to denote the (generalized) spatial mean of $u$. Here, $\abs{\Omega}$ denotes the $d$-dimensional Lebesgue measure of $\Omega$. 
The spatial mean of a function $v \in H^1(\Gamma)'$ (or $v \in L^1(\Gamma)$, respectively) is denoted as $\mean{v}_\Ga$ and defined analogously.

\item For any bounded Lipschitz domain $\Omega$ and $d\in\{2,3\}$, we introduce the space 
$$
\bL^2_\Div(\Omega):= 
\left\lbrace \v \in L^2(\Omega; \R^d): \ \Div \, \v =0 \ \text{in} \ \Omega, \ \v \cdot \n =0 \ \text{on} \ \Gamma\right\rbrace
=
\overline{C_{0,\sigma}^\infty(\Omega;\R^d)}^{L^2(\Omega)},
$$
where $C_{0,\sigma}^\infty(\Omega;\R^d) := \left\lbrace \v \in C_c^\infty(\Omega; \R^d): \ \Div \, \v =0 \ \text{in} \ \Omega \right\rbrace$. Moreover, we define 
$$
\bH^1_\Div(\Omega)=
\bH^1(\Omega) \cap \bL^2_\Div(\Omega).
$$

\end{enumerate}

\paragraph{Assumptions.}%
To prove the existence of weak solutions in the case of matched densities, we make the following assumptions:
\begin{enumerate}[label=$(\mathrm{A \arabic*})$, ref = $\mathrm{A \arabic*}$]
    %%%%%
    \item \label{ass:dom} We consider a bounded domain $\Omega\subset \R^d$ with $d\in\{2,3\}$ with Lipschitz boundary $\Gamma=\partial\Omega$ and a final time $T>0$. We further use the notation
	\begin{align*}
		Q:=\Omega\times(0,T),\quad \Si:=\Ga\times(0,T).
	\end{align*}

    %%%%%
    \item \label{ass:const} The constants occurring in the system \eqref{NSCH:S} satisfy $\eps, \delta, \beta, \kappa>0$ and $L = 0$. 
	Since the choice of $\delta$, $\eps$ and $\kappa$ has no impact on the mathematical analysis, we will simply set $\delta=\eps=\kappa=1$ without loss of generality.

    %%%%%
	\item  \label{ass:pot}
	The potentials $F:\R\to[0,\infty)$ and $G:\R\to[0,\infty)$ are twice continuously differentiable 
	and there exist two exponents $p$ and $q$ satisfying
    \begin{align*}
        p\in\begin{cases}
            [2,\infty), &\text{if $d=2$,}\\
            [2,6], &\text{if $d=3$,}
        \end{cases}
        \quad\text{and}\quad
        q\in[2,\infty)
    \end{align*}
    as well as constants $c_{F''},c_{G''}\ge 0$ such that the second-order derivatives satisfy the growth conditions
    \begin{align}
    \label{Ass:F''}
    \abs{F''(s)} &\le
        c_{F''}(1+\abs{s}^{p-2}),
    \\
    \label{Ass:G''}
    \abs{G''(s)} &\le  
    c_{G''}(1+\abs{s}^{q-2}),
    \end{align}
    for all $s\in\R$.

	These assumptions already entail that there exist constants $c_F,c_G,c_{F'},c_{G'}\ge 0$ such that $F'$, $G'$, $F$ and $G$ satisfy the growth conditions
	\begin{align}
		\label{GR:F'}
		\abs{F'(s)} &\le c_{F'}(1+\abs{s}^{p-1}), \\
		\label{GR:G'}
		\abs{G'(s)} &\le c_{G'}(1+\abs{s}^{q-1}), \\
		\label{GR:F}
		F(s) &\le c_{F}(1+\abs{s}^p), \\
		\label{GR:G}
		G(s) &\le c_{G}(1+\abs{s}^q),
	\end{align}
	for all $s\in\R$.
	
	%%%%%
	\item  \label{ass:mob}
	The mobility functions $m_\Om:\R\to\R$ and $m_\Ga:\R\to\R$
	are continuous, bounded and uniformly positive. This means that there exist positive constants $m_\Om^*$, $M_\Om^*$, $m_\Ga^*$, $M_\Ga^*$ such that for all $s \in \R$,
	\begin{align}
	     0< m_\Om^* \leq m_\Om(s) \leq M_\Om^*
	    \quad\text{and}\quad
	     0< m_\Ga^* \leq m_\Ga(s) \leq M_\Ga^*.
	\end{align}

    %%%%%
    \item \label{ass:vis-fric}
	The viscosity of the mixture $\nu: \R \to \R $ and the friction parameter $\gamma: \R \to \R$ are continuous, bounded and uniformly positive.
    This means that there exist positive constants $\nu_0$, $\nu_1$, $\gamma_0$, $\gamma_1$ such that for all $s \in \R$,
	\begin{align}
	    0<\nu_0\leq \nu(s)\leq \nu_1, \quad\text{and}\quad
	    0<\gamma_0\leq \gamma(s) \leq \gamma_1.
	\end{align}

\end{enumerate}

\bigskip

\begin{remark}\label{REM:ASS}\normalfont 
		We point out that the polynomial double-well potential
		\begin{align*}
		W_\text{dw}(s)=\tfrac 1 4 (s^2-1)^2,\quad s\in\R,
		\end{align*} 
		is a suitable choice for $F$ and $G$ as it satisfies \eqref{ass:pot} with $p = 4$ and $q = 4$. However, singular potentials like the logarithmic Flory--Huggins potential or the double-obstacle potential are not admissible in this setting as they do not even satisfy \eqref{Ass:F''}--\eqref{Ass:G''}.
%	\end{enumerate}	
\end{remark}

\paragraph{Preliminaries.} We next introduce several function spaces, inner products, norms and operators that will be used throughout this paper.
\begin{enumerate}[label=$(\mathrm{P \arabic*})$, ref = $\mathrm{P \arabic*}$]
	\item For any real numbers $k\geq 0$ and $p\in[1,\infty]$, we set
	\begin{align*}
	\LL^p := L^p(\Omega)\times L^p(\Gamma), 
	\quad\text{and}\quad
	\HH^k := H^k(\Omega) \times H^k(\Gamma),
	\end{align*}
	and we identify $\LL^2$ with $\HH^0$. Note that $\HH^k$ is a Hilbert space with respect to the inner product
	\begin{align*}
	\bigscp{(\phi,\psi)}{(\zeta,\xi)}_{\HH^k} := \bigscp{\phi}{\zeta}_{H^k(\Omega)} + \bigscp{\psi}{\xi}_{H^k(\Gamma)}
	\quad\text{for all $(\phi,\psi),(\zeta,\xi)\in\HH^k$,}
	\end{align*}
	and its induced norm $\norm{\cdot}_{\HH^k}:= \scp{\cdot}{\cdot}_{\HH^k}^{1/2}$.
	
	\item \label{pre:D} 
	For $\beta>0$, we introduce the subspace
	\begin{align*}
		\Db := \left\{ (\phi,\psi) \in \HH^1 \;\big\vert\; \phi\vert_\Ga = \beta\psi \;\; \text{a.e. on}\; \Ga \right\} \subset \HH^1,
	\end{align*}
	endowed with the inner product
	%\begin{align*}
	$	\scp{\cdot}{\cdot}_\Db := \scp{\cdot}{\cdot}_{\HH^1}$
	%\end{align*}
	and its induced norm.
	The space $\Db$ is a Hilbert space. Moreover, we define the product
	\begin{align*}
		\bigang{(\phi,\psi)}{(\zeta,\xi)}_{\Db} := \scp{\phi}{\zeta}_{L^2(\Omega)} + \scp{\psi}{\xi}_{L^2(\Gamma)}
	\end{align*}
	for all $(\phi,\psi), (\zeta,\xi)\in \LL^2$. By means of the Riesz representation theorem, this product can be extended to a duality pairing on $\Dbp\times \Db$, which will also be denoted as $\ang{\cdot}{\cdot}_{\Db}$. 
	
	In particular, the spaces $\big(\Db,\LL^2,\Dbp\big)$ form a Gelfand triplet, and the operator norm on $\Dbp$ is given by
	\begin{align*}
		\norm{(\phi,\psi)}_{\Dbp} := \sup\Big\{\, \big|\ang{(\phi,\psi)}{(\zeta,\xi)}_{\Db}\big| \;\Big\vert\; (\zeta,\xi)\in\Db \text{ with } \norm{(\zeta,\xi)}_{\Db} = 1 \Big\},
	\end{align*}
	for all $(\phi,\psi) \in \Dbp$. 
	
\item \label{pre:poin} 
We further recall the following \textit{bulk-surface Poincar\'e inequalities}:

\begin{enumerate}[label=$(\mathrm{P 3.\arabic*})$, ref = $\mathrm{P 3.\arabic*}$]

\item \label{poincare:1}
For any $\alpha>0$, there exists a constant $C_P$ depending only on $\alpha$ and $\Omega$ such that
\begin{equation}
    \label{bs-Poincare}
    \norm{(\phi,\psi)}_{\LL^2}\leq C_P \norm{(\Grad\phi,\Gradg\psi)}_{\LL^2} 
\end{equation}
for all $(\phi,\psi)\in \Da$ with 
$\alpha\abs{\Omega}\mean{\phi}_\Om + \abs{\Gamma}\mean{\psi}_\Ga = 0$.
\\[1ex]
This bulk-surface Poincar\'e inequality is a special case of the one established in \cite[Lemma~A.1]{knopf-liu} (with the parameters therein being chosen as $K=0$ and $\alpha,\beta>0$).

\item \label{poincare:2}
There exists a constant $C_P'$ depending only on $\Omega$ such that 
\begin{equation}
    \label{bs-Poincare:2}
    \norm{u}_{L^2(\Omega)}
    \leq C_P' \left( \norm{\nabla u}_{L^2(\Omega)} 
    + \norm{u}_{L^2(\Gamma)} \right)
\end{equation}
for all $u \in H^1(\Omega)$. 
\\[1ex]
This bulk-surface Poincar\'e inequality is established in \cite[Chapter II, Section 1.4]{temam2012infinite}.

\end{enumerate}

\end{enumerate}

\subsection{Existence and uniqueness of weak solutions}

In this section, we consider the case of matched densities, that is $\rho\equiv\trho_1=\trho_2$. This means we consider the system \eqref{NSCH:S} supplemented with the boundary condition \eqref{NSCH:S:8} with $L=0$ and $\beta>0$. For the analysis, without loss of generality, we set $\rho = \kappa = \eps = \delta = 1$. We further employ the usual decomposition
\begin{align}
\label{Korteweg}
    - \, \Div\big(\Grad\phi \otimes \Grad\phi \big)
    = - \Grad\left(\frac 1 2 \abs{\Grad\phi}^2 + F(\phi)\right) + \mu \Grad\phi. 
\end{align}
Hence, replacing the pressure $p$ by
\begin{align*}
    \p := p + \left(\frac 1 2 \abs{\Grad\phi}^2 + F(\phi)\right),
\end{align*}
the system \eqref{NSCH:S} can be restated as follows
\begin{subequations}
\label{NSCH:MD}
\begin{alignat}{2}
    \label{NSCH:MD:1}
    & \delt \v
    + (\v\cdot\nabla) \v
    - \Div\big( 2\nu(\phi) \, \D\v \big)
    + \Grad \p
    = \mu \Grad\phi 
    &&\quad\text{in $Q$}, 
    \\[1ex]
    \label{NSCH:MD:2}
    &\Div \, \v  = 0,
    % \qquad \J = - \tfrac 12 (\trho_2 - \trho_1) 
    %     m_\Om(\phi) \Grad\mu
    &&\quad\text{in $Q$}, 
    \\[1ex]
    \label{NSCH:MD:3}
    &\delt \phi + \Div(\phi \v )
    = \Div\big( m_\Om(\phi) \Grad \mu \big)
    &&\quad\text{in $Q$}, 
    \\[1ex]
    \label{NSCH:MD:4}
    &\mu = -\Lap \phi + F'(\phi)
    &&\quad\text{in $Q$}, 
    \\[1ex]
    \label{NSCH:MD:5}
    &\delt \psi + \Divg(\psi \v_\tau ) 
    = \Divg\big( m_\Ga(\psi) \Gradg \theta \big) - \beta m_\Om(\psi)\, \deln \mu 
    &&\quad\text{on $\Sigma$}, 
    \\[1ex]
    \label{NSCH:MD:6}
    &\theta = - \Lapg \psi + G'(\psi)
    + \deln \phi
    &&\quad\text{on $\Sigma$}, 
    \\[1ex]
    \label{NSCH:MD:7}
    &\v\cdot \n = 0,
    \qquad
    \phi\vert_\Gamma = \psi, 
    \qquad
    \mu\vert_\Gamma = \beta\theta
    &&\quad\text{on $\Sigma$},
    \\[1ex]
    \label{NSCH:MD:8}
    & \big[2\nu(\psi) (\D\v\,\n) + \gamma(\psi) \v \big]_\tau
    = \big[ -\psi \Gradg \theta \big]_\tau
    &&\quad\text{on $\Sigma$}, 
    \\[1ex]
    \label{NSCH:MD:9}
    &\v\vert_{t=0} = \v_0,\quad \phi\vert_{t=0} = \phi_0
    &&\quad\text{in $\Omega$},
    \\[1ex]
    \label{NSCH:MD:10}
    &\psi\vert_{t=0} = \psi_0
    &&\quad\text{on $\Gamma$}.
\end{alignat}
\end{subequations}
Here, we recall that in the case of matched densities, the term $\J$ vanishes. Therefore, in the Navier--Stokes equation \eqref{NSCH:MD:1} and in the corresponding boundary condition \eqref{NSCH:MD:8}, the terms related to $\J$ do not appear any more. 
The total energy associated with this system is
\begin{align}
    \label{DEF:ENERGY}
    E(\v,\phi,\psi) 
    = \intO \frac{1}{2} \abs{\v}^2 \dx
        + \intO \frac 12 \abs{\Grad\phi}^2 + F(\phi) \dx
        + \intG \frac 12 \abs{\Gradg\psi}^2 + G(\psi) \dS.
\end{align}

The notion of weak solutions to system~\eqref{NSCH:MD} is defined as follows:

\begin{definition}[Weak solutions of system \eqref{NSCH:MD}]
    \label{DEF:WS}
    Suppose that the assumptions
     \eqref{ass:dom}--\eqref{ass:vis-fric} hold.
    Let $\v_0 \in \bL^2_\Div(\Omega)$ and $(\phi_0,\psi_0) \in \DD_1$ 
    be arbitrary.
    The quintuplet $(\v,\phi,\psi,\mu,\theta)$ is called a weak solution of the system \eqref{NSCH:MD} on $[0,T]$ if the following properties hold:
    \begin{enumerate}[label = \textnormal{(\roman*)}, leftmargin=*, topsep=0pt]
        \item \label{DEF:WS:REG}
        The functions $\v$, $\phi$, $\psi$, $\mu$ and $\theta$ have the following regularity:
        \begin{subequations}
        \label{WS:REG}
       \begin{align}
            &\v \in 
            L^\infty\big(0,T;\bL_\Div^2(\Omega)\big)
                \cap L^2\big(0,T;\bH^1_\Div(\Omega) \big), 
            \\
            &\left\{
            \begin{aligned}
                &\v \in W^{1,2} \big(0,T; \bH^1_\Div(\Omega)' \big) 
                 \cap C\big([0,T];\bL^2_\Div(\Omega)\big)  
                    \ &&\text{if } \ d=2,
                \\   
                &\v \in W^{1,\frac43} \big(0,T; \bH^1_\Div(\Omega)' \big) 
                 \cap C_w\big([0,T];\bL^2_\Div(\Omega)\big)
                    \ &&\text{if } \ d=3,
            \end{aligned} 
            \right.
            \\
            &(\phi,\psi) \in C([0,T];\LL^2)
                \cap H^1(0,T;\Dbp)
                \cap L^\infty(0,T;\DD_1),
            \\
            &(\mu,\theta) \in L^4(0,T;\LL^2)\cap L^2(0,T;\DD_\beta).
        \end{align}
        \end{subequations}
        \item \label{DEF:WS:INI}
        The functions $\v$, $\phi$ and $\psi$ satisfy the initial conditions
        \begin{align}
            \label{INI}
            \v\vert_{t=0} = \v_0,\quad \phi\vert_{t=0} = \phi_0
            \quad\text{a.e.~in $\Omega$},
            \quad\text{and}\quad
            \psi\vert_{t=0} = \psi_0
            \quad\text{a.e.~on $\Gamma$}.
        \end{align}
        \item \label{DEF:WS:WF}
        The functions $\v$, $\phi$, $\psi$, $\mu$ and $\theta$ satisfy the weak formulation
        \begin{subequations}
        \label{WF}
        \begin{align}
        \label{WF:1}
        &\begin{aligned}
        &\ang{\delt \v}{\w}_{\bH_\Div^1(\Omega)} 
            - \intO (\v \otimes \v) : \Grad\w \dx
            + \intO 2\nu(\phi) \D\v : \D\w \dx
        \\
        &\quad
        = \intO \mu \Grad\phi \cdot \w \dx
        - \intG \gamma(\psi) \v\cdot \w 
            + \psi\Gradg\theta \cdot \w 
            \dS,
        \end{aligned}    
        \\[1ex]
        \label{WF:2}
        &\begin{aligned}
        &\bigang{(\delt \phi,\delt \psi)}{(\zeta,\xi)}_{\Db}
            - \intO \phi\v \cdot \Grad\zeta \dx
            - \intG \psi\v \cdot \Gradg\xi \dS
        \\     
        &\quad= 
        - \intO m_\Om(\phi) \Grad \mu \cdot \Grad \zeta \dx
        	-\intG m_\Ga(\psi) \Gradg \theta \cdot \Gradg \xi \dS,
        \end{aligned}	
        \\[1ex]
        \label{WF:3}
        &\begin{aligned}
        &\intO \mu \, \eta \dx     
            + \intG \theta \, \vartheta \dS 
        \\
        &\quad 
        = \intO \Grad\phi \cdot \Grad \eta 
            + F'(\phi)\eta\dx
            + \intG \Gradg\psi \cdot \Gradg\vartheta 
            + G'(\psi)\vartheta \dS
            \end{aligned}
        \end{align}
        \end{subequations}
        for all test functions $\w\in \bH^1_\Div(\Omega)$, $(\zeta,\xi)\in\Db$, $(\eta,\vartheta)\in\DD_1$.
    \item \label{DEF:WS:MASS}
    The functions $\phi$ and $\psi$ satisfy the mass        conservation law
        \begin{align}
            \label{WS:MASS}
            \beta \intO \phi(t) \dx + \intG \psi(t) \dS 
            = \beta \intO \phi_0 \dx + \intG \psi_0 \dS
            =: m
            \quad\text{for all $t\in[0,T]$}.
        \end{align}
    \item \label{DEF:WS:ENERGY}
    The functions $\v$, $\phi$, $\psi$, $\mu$ and $\theta$ satisfy the weak energy dissipation law
        \begin{align}
            \label{WS:ENERGY}
            \begin{aligned}
          &E\big(\v(t),\phi(t),\psi(t)\big)
            + \int_0^t\intO 2\nu(\phi)\, \abs{\D\v}^2 \dx\dt
            + \int_0^t\intG \gamma(\psi) \abs{\v}^2 \dS\dt
            \\
            &\quad + \int_0^t\intG m_\Ga(\psi) \abs{\Gradg \theta}^2 \dS\dt
            + \int_0^t\intO m_\Om(\phi) \abs{\Grad \mu}^2 \dx\dt 
            \le E(\v_0,\phi_0,\psi_0)
            \end{aligned}
        \end{align}
        for all $t\in[0,T]$.
    \end{enumerate}
\end{definition}

The existence of such a weak solution is ensured by the following theorem.

\begin{theorem}
    \label{THM:WS}
    Suppose that the assumptions \eqref{ass:dom}--\eqref{ass:vis-fric} hold and
    let $\v_0 \in \bL^2_\Div(\Omega)$ and $(\phi_0,\psi_0) \in \DD_1$ 
    be arbitrary. Then, there exists a weak solution of system \eqref{NSCH:MD} in the sense of Definition~\ref{DEF:WS}.
    If we additionally suppose that the domain $\Omega$ is of class $C^k$ with $k\in\{2,3\}$, and in the case $d=3$, we further assume that assumption \eqref{ass:pot} holds with $p< 6$, then $(\phi,\psi) \in L^2(0,T;\HH^k)$
    as well as
    \begin{subequations}
    \label{EQ:MUTH:STRG}
    \begin{alignat}{2}
        \mu &= -\Lap \phi + F'(\phi) &&\quad\text{a.e.~in $Q$},\\
        \theta &= -\Lapg \psi + G'(\psi) + \deln\phi &&\quad\text{a.e.~on $\Sigma$}.
    \end{alignat}
    \end{subequations}
\end{theorem}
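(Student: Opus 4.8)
The plan is to construct a weak solution via a one–level Faedo--Galerkin approximation and to pass to the limit using compactness gained from the energy dissipation law \eqref{CONS:ENERGY}, exactly as announced after the statement. For the velocity I would fix a basis $\{\w_i\}_{i\in\N}$ of $\bH^1_\Div(\Om)$, orthonormal in $\bL^2_\Div(\Om)$. The decisive point is the discretization of $(\phi,\psi)$ and $(\mu,\theta)$: since $(\phi,\psi)$ is constrained to $\DD_1$ while $(\mu,\theta)$ is constrained to $\Db$ (because $L=0$ in \eqref{ass:const} forces $\mu\vert_\Ga=\beta\theta$), I would first perform a change of variables rescaling the surface components (cf. \eqref{CofV}) that places both pairs in one common constrained space; on it a single basis $\{(a_i,b_i)\}_{i\in\N}$—the eigenfunctions of the associated bulk--surface Laplace operator, orthonormal in $\LL^2$ and orthogonal in $\HH^1$—serves simultaneously for $(\phi_n,\psi_n)$ and for the algebraically determined $(\mu_n,\theta_n)$. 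Inserting the finite sums into the Galerkin projections of \eqref{WF:1}--\eqref{WF:3} yields an ODE system whose right-hand side is continuous in the coefficients (as $F',G',\nu,\gamma,\mom,\mga$ are continuous), so Cauchy--Peano gives a local solution.

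Next I would derive the discrete energy identity. Because $(\phi_n,\psi_n)$ and $(\mu_n,\theta_n)$ lie in the \emph{same} finite-dimensional subspace, the functions $\v_n$, $(\mu_n,\theta_n)$ and $(\delt\phi_n,\delt\psi_n)$ are all admissible test functions, and testing \eqref{WF:1} with $\v_n$, \eqref{WF:2} with $(\mu_n,\theta_n)$ and \eqref{WF:3} with $(\delt\phi_n,\delt\psi_n)$ reproduces verbatim the cancellation of the coupling terms $\mu_n\Grad\phi_n$ and $\psi_n\Gradg\theta_n$ underlying \eqref{CONS:ENERGY}; this is why the common subspace is indispensable. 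The resulting discrete analogue of \eqref{WS:ENERGY}, together with $F,G\ge 0$, the mass law \eqref{WS:MASS} and the bulk--surface Poincaré inequality \eqref{bs-Poincare}, furnishes uniform bounds for $\v_n$ in $L^\infty(0,T;\bL^2_\Div)\cap L^2(0,T;\bH^1_\Div)$, for $(\phi_n,\psi_n)$ in $L^\infty(0,T;\DD_1)$, and for $\Grad\mu_n,\Gradg\theta_n$ in $L^2(0,T;\LL^2)$, which in particular make the discrete solutions global. Controlling the spatial means of $(\mu_n,\theta_n)$ via \eqref{WF:3}, the growth conditions \eqref{GR:F'}--\eqref{GR:G'} and the embedding $\HH^1\emb\LL^p$ upgrades this to a uniform $L^2(0,T;\Db)$-bound. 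The novel integrability then follows by interpolation: the same estimate bounds $(\mu_n,\theta_n)$ in $L^\infty(0,T;\Dbp)$, and inserting both bounds into
\[
\norm{(\mu_n,\theta_n)}_{\LL^2}^2 \le \norm{(\mu_n,\theta_n)}_{\Db}\,\norm{(\mu_n,\theta_n)}_{\Dbp}
\]
and integrating in time yields the uniform $L^4(0,T;\LL^2)$-bound claimed in \eqref{WS:REG}.

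For the limit passage I would bound $\delt\v_n$ in $L^2(0,T;\bH^1_\Div(\Om)')$ for $d=2$ and in $L^{4/3}$ for $d=3$, and $\delt(\phi_n,\psi_n)$ in $L^2(0,T;\Dbp)$, and invoke Aubin--Lions--Simon to extract a subsequence with $\v_n\to\v$ strongly in $L^2(0,T;\bL^2_\Div)$, $(\phi_n,\psi_n)\to(\phi,\psi)$ strongly in $C([0,T];\LL^2)$ and in $L^2(0,T;\HH^s)$ for $s<1$, while $\D\v_n\wto\D\v$, $\Grad\mu_n\wto\Grad\mu$ and $\Gradg\theta_n\wto\Gradg\theta$ weakly in $L^2$. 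The delicate point is that this does not yet give strong convergence of $\Grad\phi_n$, which is needed to identify the Korteweg-type force $\mu\Grad\phi$ in \eqref{WF:1}. I would recover it by a norm-convergence argument: testing \eqref{WF:3} with $(\phi_n,\psi_n)$ expresses $\int_0^T\intO|\Grad\phi_n|^2\dx\dt+\int_0^T\intG|\Gradg\psi_n|^2\dS\dt$ through terms that all converge ($\mu_n\phi_n$, $\theta_n\psi_n$ by weak--strong pairing, $F'(\phi_n)\phi_n$, $G'(\psi_n)\psi_n$ by the growth conditions and strong $\LL^2$-convergence), with limit equal to the Dirichlet energy of $(\phi,\psi)$ by the limit version of \eqref{WF:3}; combined with weak convergence this forces $(\Grad\phi_n,\Gradg\psi_n)\to(\Grad\phi,\Gradg\psi)$ strongly in $L^2$, without appealing to $\HH^2$-regularity. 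Every nonlinearity then passes to the limit—$\nu(\phi_n)$, $\gamma(\psi_n)$, $F'(\phi_n)$, $G'(\psi_n)$ by continuity and dominated convergence, the convective terms and $\mu_n\Grad\phi_n$ by weak--strong pairing, and $[\psi_n\Gradg\theta_n]_\tau$ by strong trace convergence of $\psi_n$ against weak convergence of $\Gradg\theta_n$. Weak lower semicontinuity of the dissipative terms gives \eqref{WS:ENERGY}, and the regularity \eqref{WS:REG} lets one attach the initial data \eqref{INI}. I expect the two main obstacles to be precisely these: engineering the approximation so that the discrete energy identity holds exactly, and securing the strong gradient convergence that underlies the identification of the coupling terms.

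For the higher regularity on a domain of class $C^k$, $k\in\{2,3\}$, I would fix a.e.\ $t$ and read \eqref{WF:3} as the weak form of the coupled bulk--surface elliptic problem $-\Lap\phi+F'(\phi)=\mu$ in $\Om$, $-\Lapg\psi+G'(\psi)+\deln\phi=\theta$ on $\Ga$ with $\phi\vert_\Ga=\psi$, whose data $(\mu,\theta)$ lie in $\LL^2$ for a.e.\ $t$ and in $L^2(0,T;\LL^2)$ by the previous step. Elliptic regularity for this coupled system then yields $(\phi,\psi)\in L^2(0,T;\HH^k)$ and the almost-everywhere identities \eqref{EQ:MUTH:STRG}. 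The restriction $p<6$ in the case $d=3$ serves to make $F'(\phi)$ integrable enough for the first regularity step admissible; once $\phi\in\HH^2$ is reached one has $\phi\in L^\infty(\Om)$ by $H^2(\Om)\emb L^\infty(\Om)$ (valid for $d\le 3$), so that $F'(\phi)\in L^2(\Om)$ and the argument closes by a short bootstrap. For $d=2$ no growth restriction is needed because $H^1(\Om)\emb L^r(\Om)$ for every finite $r$, so that $F'(\phi)\in L^2(\Om)$ holds under \eqref{ass:pot} for all admissible $p$.
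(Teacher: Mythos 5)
Your overall architecture coincides with the paper's: the change of variables $\psi=\alpha\Psi$, $\theta=\alpha^{-1}\Theta$ with $\alpha=\sqrt{\beta}$ so that $(\phi,\Psi)$ and $(\mu,\Theta)$ both live in $\DD_\alpha$; a single bulk--surface eigenbasis for both pairs plus Stokes eigenfunctions for the velocity; the discrete energy identity obtained by testing with $\v_k$, $(\mu_k,\Theta_k)$ and $-(\delt\phi_k,\delt\Psi_k)$; the $L^\infty(0,T;\DD_\alpha')$ bound on the chemical potentials from the weak form of \eqref{WF:3} combined with the duality interpolation $\norm{(\mu_k,\Theta_k)}_{\LL^2}^2\le C\norm{(\mu_k,\Theta_k)}_{\DD_\alpha'}\,\norm{(\Grad\mu_k,\Gradg\Theta_k)}_{\LL^2}+C\norm{(\mu_k,\Theta_k)}_{\DD_\alpha'}^2$ giving $L^4(0,T;\LL^2)$; and Aubin--Lions compactness. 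One deviation is your insistence on strong $L^2$ convergence of $(\Grad\phi_n,\Gradg\psi_n)$: the paper never needs it, since the Korteweg term can be passed to the limit by pairing the strong convergence $\phi_k\to\phi$ in $C([0,T];L^r(\Om))$, $r<6$, against the weak convergence of $\Grad\mu_k$ (after moving the gradient using $\Div\,\w=0$ and $\w\cdot\n=0$). Your norm-convergence detour is a legitimate alternative in principle, but at the admissible endpoint $p=6$ for $d=3$ the term $\int_Q F'(\phi_n)(\phi_n-\phi)$ cannot be controlled with the available convergences ($F'(\phi_n)$ is bounded only in $L^{6/5}$ while $\phi_n\to\phi$ strongly only in $L^r$, $r<6$, with no uniform integrability of $|\phi_n|^p$ in $L^1(\Om)$), so this step introduces a gap that the paper's route avoids.

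The more substantive gap is in the higher-regularity part. For $4<p<6$ and $d=3$, $\phi(t)\in H^1(\Om)$ only gives $F'(\phi(t))\in L^{6/(p-1)}(\Om)$ with $6/(p-1)<2$, so you cannot simply ``read off'' $\LL^2$ data for the coupled elliptic system, and your bootstrap has no starting point (the cited bulk--surface elliptic regularity is an $L^2$-theory, so the intermediate $W^{2,r}$ ladder you would need is not available off the shelf). The paper's mechanism is a self-improving estimate: Gagliardo--Nirenberg yields $\norm{F'(\phi_k)}_{L^2(\Om)}\le C+\epsilon\norm{\phi_k}_{H^2(\Om)}$ --- this is exactly where $p<6$ enters, as the interpolation exponent on the $H^2$-norm must be strictly less than one --- and the $\epsilon$-term is absorbed into the elliptic estimate; moreover this is carried out at the Galerkin level, where $(\phi_k,\Psi_k)\in\HH^2$ is known a priori from the eigenfunction expansion, and only then passed to the limit. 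The $\HH^3$ step requires the analogous absorption for $F''(\phi)\Grad\phi$ via Agmon's inequality: your remark that $\phi\in L^\infty(\Om)$ ``closes the argument'' does not deliver the required time-integrability, since the resulting bound $\norm{F'(\phi)}_{H^1(\Om)}\le C(1+\norm{\phi}_{H^2(\Om)}^{p-2})$ is not square-integrable in time when $p>3$ and $\phi$ is only in $L^2(0,T;H^2(\Om))$.
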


\bigskip

In two dimensions, we show the uniqueness of the weak solutions under additional assumptions on the domain $\Omega$, the viscosity $\nu$, the mobilities $\mom$ and $\mga$ and the potentials $F$ and $G$.

\begin{theorem}
    \label{THM:UNI}
    Suppose that the assumptions \eqref{ass:dom}--\eqref{ass:vis-fric} hold with $d=2$.
    Without loss of generality, we assume $p\ge 3$.
    We additionally assume that $\Omega$ is of class $C^3$, $\gamma$ is differentiable and $\gamma'$ is locally bounded,
    the functions $\nu$, $\mom$ and $\mga$ are constant, the potentials $F$ and $G$ satisfy the relation
    \begin{align}
    \label{REL:POT}
        F(s) = \beta G(s) \quad\text{for all $s\in\R$,}
    \end{align}
    \revised{and there exists a constant $c_{F'''}\ge 0$ such that}
    \begin{align*}
    %\label{ASS:F'''}
        \revised{|F'''(s)| \le c_{F'''}(1+|s|^{p-3}) \quad\text{for all $s\in\R$.} }
    \end{align*}
    Then, the weak solution of system \eqref{NSCH:S} given by Theorem~\ref{THM:WS} is unique. In addition, given two weak solutions $(\v_1,\phi_1,\psi_1,\mu_1,\theta_1)$ and $(\v_2,\phi_2,\psi_2,\mu_2,\theta_2)$ corresponding to the 
initial data $(\v_{0}^1, (\phi_0^1, \psi_0^1))$, $(\v_{0}^2, (\phi_0^2, \psi_0^2)) \in \bL^2_\Div(\Omega)\times\DD_1$, the following continuous dependence estimate
\begin{equation}
\label{cont-dep*}
\begin{split}
& \norm{ \v_1(t)-\v_2(t)}_{L^2(\Omega)}^2 +  \norm{ \phi_1(t)-\phi_2(t)}_{H^1(\Omega)}^2
 + \norm{\psi_1(t)-\psi_2(t)}_{H^1(\Ga)}^2 \\
 &\leq 
 C\mathrm{e}^{C\int_0^T \mathcal{F}(t) \dt}
 \left( \norm{ \v_0^1-\v_0^2}_{L^2(\Omega)}^2 +  \norm{ \phi_0^1-\phi_0^2}_{H^1(\Omega)}^2
 + \norm{\psi_0^1-\psi_0^2}_{H^1(\Ga)}^2 \right),
\end{split}
 \end{equation}
holds for all $t\in [0,T]$, where 
$$
\mathcal{F}:= 
       1+ \norm{\v_1}_{L^4(\Omega)}^4+
       \norm{\v_2}_{H^1(\Omega)}^2 +
       \norm{\mu_2}_{H^1(\Omega)}^2+
       \norm{\theta_2}_{H^1(\Ga)}^2+
       \norm{\phi_1}_{W^{1,4}(\Omega)}^2,
$$
and the constant $C>0$ depends only on $\Omega$, the parameters of the system and the norms of the initial data.
\end{theorem}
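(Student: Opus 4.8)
The strategy is to control the difference of two weak solutions by the quadratic part of the total energy \eqref{DEF:ENERGY} and to close a Grönwall inequality, which simultaneously yields \eqref{cont-dep*} and, for coinciding data, uniqueness. Writing $\hat\v=\v_1-\v_2$, $\hat\phi=\phi_1-\phi_2$, $\hat\psi=\psi_1-\psi_2$, $\hat\mu=\mu_1-\mu_2$, $\hat\theta=\theta_1-\theta_2$, I would set
\begin{align*}
    \Phi(t):=\tfrac12\norm{\hat\v}_{L^2(\Om)}^2+\tfrac12\norm{\Grad\hat\phi}_{L^2(\Om)}^2+\tfrac12\norm{\Gradg\hat\psi}_{L^2(\Ga)}^2+\tfrac{\beta}{2}\norm{\hat\phi}_{L^2(\Om)}^2+\tfrac12\norm{\hat\psi}_{L^2(\Ga)}^2,
\end{align*}
which is equivalent to $\norm{\hat\v}_{L^2(\Om)}^2+\norm{\hat\phi}_{H^1(\Om)}^2+\norm{\hat\psi}_{H^1(\Ga)}^2$. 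Since $\Om$ is of class $C^3$, Theorem~\ref{THM:WS} provides $(\phi_i,\psi_i)\in L^2(0,T;\HH^3)$ and the pointwise relations \eqref{EQ:MUTH:STRG}; in particular $(\hat\mu,\hat\theta)\in\Db$ because $\hat\mu\vert_\Ga=\beta\hat\theta$. The aim is to establish $\ddt\Phi\le C\mathcal F\,\Phi$ up to terms absorbed by the dissipation; since $\mathcal F\in L^1(0,T)$ by the regularity \eqref{WS:REG}, Grönwall's lemma then gives \eqref{cont-dep*}, and uniqueness is the special case of identical data.

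I would test the difference of \eqref{WF:1} with $\w=\hat\v$, the difference of \eqref{WF:2} with $(\zeta,\xi)=(\hat\mu,\hat\theta)$, and — to control the $L^2$-part of $\Phi$ — also with $(\zeta,\xi)=(\beta\hat\phi,\hat\psi)\in\Db$. Adding these identities to the time derivative of the gradient part of $\Phi$ (handled in the next paragraph), the dissipative quantities $2\nu\norm{\D\hat\v}_{L^2}^2$, $\mom\norm{\Grad\hat\mu}_{L^2}^2$, $\mga\norm{\Gradg\hat\theta}_{L^2(\Ga)}^2$ and $\int_\Ga\gamma(\psi_1)\abs{\hat\v}^2$ appear with the correct sign. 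The key algebraic point, which reproduces the cancellation behind \eqref{CONS:ENERGY}, is that the bulk Korteweg force and the bulk convective term combine — after integration by parts using $\Div\v_i=0$ and $\v_i\cdot\n=0$ — into the single residual $\int_\Om(\mu_2\hat\v-\hat\mu\v_2)\cdot\Grad\hat\phi$, while on the surface the elastic term $-\int_\Ga(\psi_1\Gradg\theta_1-\psi_2\Gradg\theta_2)\cdot\hat\v$ and the surface convective term cancel directly, leaving only $\int_\Ga\hat\psi\,(\v_2\cdot\Gradg\hat\theta-\hat\v\cdot\Gradg\theta_2)$. Crucially, no uncontrolled $\Divg\hat\v$ term survives once the velocity and phase equations are combined.

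The step that forces both structural assumptions is the rigorous identification
\begin{align*}
    \ddt\Big(\tfrac12\norm{\Grad\hat\phi}_{L^2(\Om)}^2+\tfrac12\norm{\Gradg\hat\psi}_{L^2(\Ga)}^2\Big)=\bigang{(\delt\hat\phi,\delt\hat\psi)}{(\hat\mu,\hat\theta)}_{\Db}-\bigang{(\delt\hat\phi,\delt\hat\psi)}{(N_\Om,N_\Ga)}_{\Db},
\end{align*}
where $N_\Om:=F'(\phi_1)-F'(\phi_2)$ and $N_\Ga:=G'(\psi_1)-G'(\psi_2)$. The relation $F=\beta G$ is exactly what guarantees $N_\Om\vert_\Ga=\beta N_\Ga$, i.e. $(N_\Om,N_\Ga)\in\Db$; without it the pairing $\bigang{(\delt\hat\phi,\delt\hat\psi)}{(N_\Om,N_\Ga)}_{\Db}$ would not even be defined, as $(\delt\hat\phi,\delt\hat\psi)$ only lies in $\Dbp$. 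By \eqref{EQ:MUTH:STRG} one has $(\hat\mu-N_\Om,\hat\theta-N_\Ga)=(-\Lap\hat\phi,-\Lapg\hat\psi+\deln\hat\phi)\in\Db$, and the chain formula of Appendix~\ref{A} — whose validity rests on the $\HH^3$-regularity granted by the $C^3$ assumption — yields the displayed identity, the boundary contributions $\deln\hat\phi\,\delt\hat\phi$ and $\deln\hat\phi\,\delt\hat\psi$ cancelling because $\hat\psi=\hat\phi\vert_\Ga$. I expect this to be the main obstacle: without $C^3$ and $F=\beta G$, neither the chain rule nor the admissibility of the potential pairing can be justified.

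It then remains to bound every remaining term by $\eta$ times the dissipation plus $C\mathcal F\,\Phi$, using only two-dimensional tools: Ladyzhenskaya's inequality, $H^1(\Om)\emb L^4(\Om)$, $H^1(\Ga)\emb L^\infty(\Ga)$, the trace theorem, Agmon's inequality together with the elliptic estimate relating $\norm{\phi_i}_{H^3}$ to $\norm{\mu_i}_{H^1}$, and Korn's second inequality to pass from $\norm{\D\hat\v}_{L^2}$ to $\norm{\hat\v}_{H^1}$. The Navier--Stokes convective residual reduces to $\int_\Om(\hat\v\cdot\Grad)\v_1\cdot\hat\v$ and is controlled by $\norm{\v_1}_{L^4}^4\norm{\hat\v}_{L^2}^2$; integration by parts turns $\int_\Om(\mu_2\hat\v-\hat\mu\v_2)\cdot\Grad\hat\phi$ into terms absorbed by $\mom\norm{\Grad\hat\mu}_{L^2}^2$ or bounded through $\norm{\mu_2}_{H^1}^2$ and $\norm{\v_2}_{H^1}^2$; the surface residual is handled via $\norm{\theta_2}_{H^1(\Ga)}^2$ and $\norm{\v_2}_{H^1}^2$; the friction difference uses the local boundedness of $\gamma'$ and $H^1(\Ga)\emb L^\infty(\Ga)$ to produce $\norm{\v_2}_{H^1}^2$; and the potential pairing is estimated by $\norm{(\delt\hat\phi,\delt\hat\psi)}_{\Dbp}\,\norm{(N_\Om,N_\Ga)}_{\HH^1}$, the first factor coming from \eqref{WF:2} (dissipation plus convection) and the second bounded by $C(1+\norm{\phi_1}_{W^{1,4}})\norm{(\hat\phi,\hat\psi)}_{\HH^1}$, which accounts for the term $\norm{\phi_1}_{W^{1,4}}^2$ in $\mathcal F$. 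Collecting these bounds and choosing $\eta$ small gives $\ddt\Phi\le C\mathcal F\,\Phi$, and Grönwall's lemma yields \eqref{cont-dep*}; taking identical initial data forces $\Phi\equiv0$, hence uniqueness.
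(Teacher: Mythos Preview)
Your approach is essentially the same as the paper's: test the Navier--Stokes difference with $\hat\v$, test the Cahn--Hilliard difference both with the chemical potential pair and with the phase-field pair, invoke the bulk--surface chain rule, exploit $F=\beta G$ so that the potential differences form an admissible test pair, estimate all remainders with two-dimensional embeddings, and close by Gr\"onwall.

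Two technical points deserve comment. First, Proposition~\ref{chainrule} is stated for a \emph{single} space $\DD_\sigma$ (it requires $(\phi,\psi)\in\DD_\sigma$ and $(\delt\phi,\delt\psi)\in\DD_\sigma'$ with the same $\sigma$), whereas in the original variables you have $(\hat\phi,\hat\psi)\in\DD_1$ but $(\delt\hat\phi,\delt\hat\psi)\in\Dbp$. The paper resolves this mismatch by the change of variables $\Psi=\beta^{-1/2}\psi$, $\Theta=\beta^{1/2}\theta$, which places both the phase fields and the chemical potentials in $\DD_{\sqrt\beta}$ and makes the proposition directly applicable with $\sigma=\sqrt\beta$, $\kappa=\beta$. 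Your version of the chain rule is correct and is equivalent under this change of variables, but it is not literally what Appendix~\ref{A} delivers.

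Second, the test with $(\beta\hat\phi,\hat\psi)$ does more than control the $L^2$-part of $\Phi$: after integrating $\int_\Om\Grad\hat\mu\cdot\Grad\hat\phi$ by parts and inserting \eqref{EQ:MUTH:STRG}, it produces the extra dissipation $\|\Delta\hat\phi\|_{L^2(\Om)}^2+\|\hat\theta\|_{L^2(\Ga)}^2$. This is essential, because $\|N_\Om\|_{H^1(\Om)}$ contains a contribution of order $\|\hat\phi\|_{W^{1,3}(\Om)}$ (from the term $F''(\phi_2)\Grad\hat\phi$) that cannot be bounded by $\|\hat\phi\|_{H^1}$ alone; the paper absorbs it via the bulk--surface elliptic estimate $\|(\hat\phi,\hat\psi)\|_{\HH^2}^2\le C\big(\|\Delta\hat\phi\|_{L^2}^2+\|\hat\theta\|_{L^2(\Ga)}^2+\|\hat\psi\|_{L^2(\Ga)}^2\big)$. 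Your claimed bound $\|(N_\Om,N_\Ga)\|_{\HH^1}\le C(1+\|\phi_1\|_{W^{1,4}})\|(\hat\phi,\hat\psi)\|_{\HH^1}$ is therefore slightly too optimistic, but the missing piece is already present in your scheme once you unpack the $(\beta\hat\phi,\hat\psi)$-test.
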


\subsection{Proofs}

\begin{proof}[Proof of Theorem~\ref{THM:WS}]
We construct a weak solution to system \eqref{NSCH:MD} by discretizing the weak formulation \eqref{WF} through a Faedo--Galerkin scheme. 

In order to obtain suitable {\it uniform} estimates for the Galerkin approximation, it is necessary that the phase-fields $(\phi,\psi)$ and the chemical potentials $(\mu,\theta)$ are approximated in the same finite-dimensional subspace. In turn this is only possible if the phase-fields and the chemical potentials are coupled by the same Dirichlet type boundary condition (cf. \eqref{NSCH:MD:7}), which is not the case for general $\beta>0$. However, by a change of variables, we can reformulate system \eqref{NSCH:MD} in such a way that the Dirichlet type boundary conditions for the redefined variables are exactly the same. To this end, we set $\alpha:=\sqrt{\beta}>0$, and we introduce the functions
\begin{alignat}{2}
\label{CofV}
\left\{\;
\begin{aligned}
    \Psi &:= \alpha^{-1} \psi,
    &&\quad\text{i.e.,}\quad
    \psi = \alpha\Psi,
    \\
    \Psi_0 &:= \alpha^{-1} \psi_0,
    &&\quad\text{i.e.,}\quad
    \psi_0 = \alpha\Psi_0,
    \\
    \Theta &:= \alpha \theta,
    &&\quad\text{i.e.,}\quad
    \theta = \alpha^{-1} \Theta,
    \\
    \nga(\Psi) &:= \alpha^{-2} \mga(\alpha \Psi) 
    = \alpha^{-2} \mga(\psi).
    \end{aligned}
\right.
\end{alignat}
By this change of variables, system \eqref{NSCH:MD} is equivalent to
\begin{subequations}
\label{NSCH:MD*}
\begin{alignat}{2}
    \label{NSCH:MD*:1}
    & \delt \v
    + (\v\cdot\nabla) \v
    - \Div\big( 2\nu(\phi) \, \D\v \big)
    + \Grad \p
    = \mu \Grad\phi 
    &&\quad\text{in $Q$}, 
    \\[1ex]
    \label{NSCH:MD*:2}
    &\Div \, \v = 0,
    % \qquad \J = - \tfrac 12 (\trho_2 - \trho_1) 
    %     m_\Om(\phi) \Grad\mu
    &&\quad\text{in $Q$}, 
    \\[1ex]
    \label{NSCH:MD*:3}
    &\delt \phi + \Div(\phi \v )
    = \Div\big( m_\Om(\phi) \Grad \mu \big)
    &&\quad\text{in $Q$}, 
    \\[1ex]
    \label{NSCH:MD*:4}
    &\mu = -\Lap \phi + F'(\phi)
    &&\quad\text{in $Q$}, 
    \\[1ex]
    \label{NSCH:MD*:5}
    &\delt \Psi + \Divg(\Psi \v_\tau ) 
    = \Divg\big( \nga(\Psi) \Gradg \Theta \big) - \alpha\, m_\Om(\alpha\Psi)\, \deln \mu 
    &&\quad\text{on $\Sigma$}, 
    \\[1ex]
    \label{NSCH:MD*:6}
    &\Theta = - \alpha^2 \, \Lapg \Psi + \alpha G'(\alpha\Psi)
    + \alpha \deln \phi
    &&\quad\text{on $\Sigma$}, 
    \\[1ex]
    \label{NSCH:MD*:7}
    &\v\cdot \n = 0
    \qquad
    \phi\vert_\Gamma = \alpha \Psi, 
    \qquad
    \mu\vert_\Gamma = \alpha \Theta
    &&\quad\text{on $\Sigma$},
    \\[1ex]
    \label{NSCH:MD*:8}
    & \big[2\nu(\alpha\Psi) (\D\v\,\n) + \gamma(\alpha\Psi) \v \big]_\tau
    = \big[ -\Psi \Gradg \Theta \big]_\tau
    &&\quad\text{on $\Sigma$}, 
    \\[1ex]
    \label{NSCH:MD*:9}
    &\v\vert_{t=0} = \v_0,\quad \phi\vert_{t=0} = \phi_0
    &&\quad\text{in $\Omega$},
    \\[1ex]
    \label{NSCH:MD*:10}
    &\Psi\vert_{t=0} = \Psi_0
    &&\quad\text{on $\Gamma$}.
\end{alignat}
\end{subequations}

In view of Definition~\ref{DEF:WS}, a weak solution $(\v, \phi, \Psi, \mu, \Theta)$ of system \eqref{CofV} needs to have the regularity
\begin{subequations}
\label{WS:REG:alpha}
\begin{align}
    &\v \in 
    L^\infty\big(0,T;\bL_\Div^2(\Omega)\big)
        \cap L^2\big(0,T;\bH^1_\Div(\Omega) \big), 
    \\
    &\left\{
    \begin{aligned}
        &\v \in W^{1,2} \big(0,T; \bH^1_\Div(\Omega)' \big) 
         \cap C\big([0,T];\bL^2_\Div(\Omega)\big)  
            \ &&\text{if } \ d=2,
        \\   
        &\v \in W^{1,\frac43} \big(0,T; \bH^1_\Div(\Omega)' \big) 
         \cap C_w\big([0,T];\bL^2_\Div(\Omega)\big)
            \ &&\text{if } \ d=3,
    \end{aligned} 
    \right.
    \\
    &(\phi,\Psi) \in C([0,T];\LL^2)
        \cap H^1(0,T;\DD_\alpha')
        \cap L^\infty(0,T;\DD_\alpha),
    \\
    &(\mu,\Theta) \in L^4(0,T;\LL^2)\cap L^2(0,T;\DD_\alpha),
\end{align}
\end{subequations}
and satisfy the weak formulation which reads as
\begin{subequations}
\label{WF:alpha}
\begin{align}
\label{WF:1:alpha}
&\begin{aligned}
&\ang{\delt \v}{\w}_{\bH_\Div^1(\Omega)} 
    - \intO (\v \otimes \v) : \Grad\w \dx
    + \intO 2\nu(\phi) \D\v : \D\w \dx
\\
&\quad
= \intO \mu \Grad\phi \cdot \w \dx
- \intG \gamma(\alpha \Psi) \v\cdot \w 
    + \Psi\Gradg\Theta \cdot \w \dS,
\end{aligned}    
\\[1ex]
\label{WF:2:alpha}
&\begin{aligned}
&\bigang{(\delt \phi,\delt \Psi)}{(\zeta,\xi)}_{\DD_\alpha}
- \intO \phi\v \cdot \Grad\zeta \dx
    - \intG \Psi \v \cdot \Gradg\xi \dS
\\     
&\quad= 
- \intO m_\Om(\phi) \Grad \mu \cdot \Grad \zeta \dx
	-\intG \eta_\Ga(\Psi) \Gradg \Theta \cdot \Gradg \xi \dS,
\end{aligned}	
\\[1ex]
\label{WF:3:alpha}
&\begin{aligned}
&\intO \mu \, \eta \dx     
    + \intG \Theta \, \vartheta \dS 
\\
&\quad 
= \intO \Grad\phi \cdot \Grad \eta 
    + F'(\phi)\eta\dx
    + \intG \alpha^2 \Gradg\Psi \cdot \Gradg\vartheta 
    + \alpha G'(\alpha \Psi)\vartheta \dS
    \end{aligned}
\end{align}
\end{subequations}
for all test functions $\w\in \bH^1_\Div(\Omega)$ and $(\zeta,\xi),\,(\eta,\vartheta)\in\DD_\alpha$.

We now prove the existence of a weak solution of system \eqref{NSCH:MD*} (in the aforementioned sense) and afterwards,
we use the change of variables \eqref{CofV} to infer the existence of a weak solution to the original model \eqref{NSCH:MD}.
The weak formulation \eqref{WF:alpha} can indeed be discretized by a suitable Faedo--Galerkin scheme.

\noindent\textbf{Step 1: Discretization via a bulk-surface Faedo--Galerkin scheme.}
It has already been established in \cite[Theorem~4.4]{knopf-liu} that the second-order elliptic eigenvalue problem
\begin{align}
    \label{EIG}
    \left\{
    \begin{aligned}
        -\Lap \zeta &= \lambda \zeta &&\text{in $\Omega$},\\
        -\Lapg \xi + \alpha \deln \zeta &= \lambda \xi &&\text{on $\Gamma$},\\
        \zeta\vert_\Gamma &= \alpha \xi &&\text{on $\Gamma$}
    \end{aligned}
    \right.
\end{align}
has countably many eigenvalues $\{\lambda_i\}_{i\in\N}$ with $\lambda_i\ge 0$ for all $i\in\N$, and the corresponding eigenfunctions $\{(\zeta_k,\xi_k)\}_{k\in\N} \subset \DD_\alpha$
can be chosen such that they form an orthonormal basis of $\LL^2$.
Here, we fix the eigenfunction associated with the first eigenvalue $\lambda_1=0$ as 
\begin{align}
    \label{1STEIG}
    (\zeta_1,\xi_1) = \frac{1}{\sqrt{\alpha^2|\Omega|+|\Gamma|}}(\alpha,1).
\end{align}
Moreover, in view of their construction (see \cite{knopf-liu}), the eigenfunctions $(\zeta_k,\xi_k)$ with $k \ge 2$ satisfy the mean value constraint
\begin{align}
    \label{EIG:MEAN}
    \alpha |\Omega| \meano{\zeta_k} + |\gamma| \meang{\xi_k} = 0.
\end{align} 
For any $k\in\N$, we introduce the finite dimensional subspace 
\begin{align}
    \label{DEF:ZK}
    \ZZK := \text{span}\,\{(\zeta_1,\xi_1),...,(\zeta_k,\xi_k)\} \subset \Da,
\end{align}
and we write
\begin{align*}
    \mathbb{P}_{\ZZK}(\zeta,\xi)
    = \big(\mathbb{P}^\Om_{\ZZK}(\zeta,\xi),\mathbb{P}^\Ga_{\ZZK}(\zeta,\xi)\big)
    \quad\text{for any $(\zeta,\xi)\in \DD_\alpha$},
\end{align*}
to denote the $\LL^2$-orthogonal projection onto $\ZZK$.

Next, we consider the Stokes operator with Navier boundary conditions (see, e.g., \cite[Appendix~A]{Abels2012})
\begin{equation}
\mathbf{A}: \mathcal{D}(\mathbf{A}) \subset \bL^2_\Div(\Omega) \rightarrow \bL^2_\Div(\Omega): \quad 
\v \mapsto \mathbf{A}\v:=
- \mathbb{P} \Div (2 D \v),
\end{equation}
where $\mathbb{P}$ is the Leray projection operator, with domain 
\begin{equation}
\mathcal{D}(\mathbf{A})
=
\Big\{ \v\in \bH^2(\Omega) \cap \bL^2_\Div(\Omega): \big[2 (\D\v\,\n) + \v \big]_\tau=0 \;\text{on $\Gamma$} \Big\}.
\end{equation}
Since $\mathbf{A}$ is a positive and self-adjoint operator on $\bL^2_\Div(\Omega)$ with compact inverse, and 
$$
\left( \bL^2_\Div(\Omega), \mathcal{D}(\mathbf{A}) \right)_{\frac12, 2}= \bH^1_\Div (\Omega),
$$ 
there exists a sequence of countably many positive eigenvalues $\big\lbrace \widetilde{\lambda}_i \big\rbrace_{i \in \mathbb{N}}$ and the corresponding eigenfunctions $\lbrace \w_i \rbrace_{i \in \mathbb{N}}\subset \bH^1_\Div(\Omega)$, 
which are determined by
\begin{equation}
    \int_{\Omega} 2 D \w_i : \nabla \v \dx + \int_{\Gamma} \w_i \cdot \v \dS = \widetilde{\lambda}_i \int_{\Omega}  \w_i \cdot \v \dx \quad \text{for all $\v \in \bH^1_\Div (\Omega)$},
\end{equation}
can be chosen in such a way that they form an orthonormal basis of $\bL^2_\Div(\Omega)$. 
For any $k \in \mathbb{N}$, we define the finite dimensional subspace 
$$
\UU_k = \text{span}\,\{\w_1,...,\w_k\} \subset \bH^1_\Div(\Omega),
$$
and the $\bL^2_\Div(\Omega)$-orthogonal projection onto $\UU_k$ is denoted by $\mathbb{P}_{\UU_k}$.

Next, for any $k\in\N$ and $t\in[0,T]$, we make the ansatz
\begin{subequations}
\label{DEF:APPROX}
\begin{alignat}{2}
    \label{DEF:VK}
    \v_k(t) 
    &:= \sum_{i=1}^k a^k_i(t)\, \w_i,
    &&\quad\text{a.e. in $\Omega$},
    \\
    \label{DEF:PHIPSIK}
    \big(\phi_k(t),\Psi_k(t)\big) 
    &:= \sum_{i=1}^k b^k_i(t)\, (\zeta_i,\xi_i),
    &&\quad\text{a.e. in $\Omega\times\Gamma$},
    \\
    \label{DEF:MUTHK}
    \big(\mu_k(t),\Theta_k(t)\big) 
    &:= \sum_{i=1}^k c^k_i(t)\, (\zeta_i,\xi_i),
    &&\quad\text{a.e. in $\Omega\times\Gamma$}.
\end{alignat}
\end{subequations}
Here, the scalar, time-dependent coefficients $a^k_i$, $b^k_i$ and $c^k_i$, $i=1,...,k$ are assumed to be continuously differentiable functions that are still to be determined. They need to be designed in such a way that the discretized weak formulation 
\begin{subequations}
    \label{WFD}
    \begin{align}
    \label{WFD:1}
    &\begin{aligned}
    &\ang{\delt \v_k}{\w}_{\bH_\Div^1(\Omega)} 
        - \intO (\v_k \otimes \v_k) \Grad\w \dx 
        + \intO 2\nu(\phi_k) \D\v_k : \D\w \dx
    \\
    &\quad
    = \intO \mu_k \Grad\phi_k \cdot \w \dx
    - \intG \gamma(\alpha\Psi_k) \v_k\cdot \w 
        + \Psi_k\Gradg\Theta_k \cdot \w \dS,
        \end{aligned}
    \\[1ex]
    \label{WFD:2}
    &\begin{aligned}
    &\bigang{(\delt \phi_k,\delt \Psi_k)}{(\zeta,\xi)}_{\Da} 
        -  \intO \phi_k\v_k \cdot \Grad\zeta \dx
        - \intG \Psi_k\v_k \cdot \Gradg\xi \dS
    \\     
    &\quad= -  \intO m_\Om(\phi_k) \Grad \mu_k : \Grad \zeta \dx
    	-\intG n_\Ga(\Psi_k) \Gradg \Theta_k : \Gradg \xi \dS,
    	\end{aligned}
    \\[1ex]
    \label{WFD:3}
    &\begin{aligned}
    &\intO \mu_k \cdot \zeta \dx     
        + \intG \Theta_k \cdot \xi \dS
    \\
    &\quad 
    = \intO \Grad\phi_k \cdot \Grad \zeta 
        + F'(\phi_k) \zeta \dx
        + \intG \alpha^2 \Gradg\Psi_k \cdot \Gradg \xi 
        + \alpha G'(\alpha\Psi_k)\xi \dS
    \end{aligned}
    \end{align}
\end{subequations}
holds for all $\w\in \UU_k$ and all $(\zeta,\xi)\in\ZZK$,
supplemented with the initial conditions
\begin{alignat}{2}
    \label{WFD:INI:1}
    \v_k\vert_{t=0}
    &= \mathbb{P}_{\UU_k}(\v_0) \in \UU_k
    &&\quad\text{in $\Omega$,}
    \\
    \label{WFD:INI:2}
    (\phi_k,\Psi_k)\vert_{t=0} 
    &= \mathbb{P}_{\ZZK}(\phi_0,\Psi_0) \in \ZZK
    &&\quad\text{in $\Omega\times\Gamma$},
\end{alignat}
where $\Psi_0:=\alpha^{-1}\psi_0$.

Let now $\mathbf{a}^k := (a_1^k,...,a_{k}^k)$, $\mathbf{b}^k := (b_1^k,...,b_{k}^k)$ and $\mathbf{c}^k := (c_1^k,...,c_{k}^k)$ denote the coefficient vectors. 
Testing \eqref{WFD:1} with $\w_1,...,\w_k$ and \eqref{WFD:2} with $(\zeta_1,\xi_1),...,(\zeta_k,\xi_k)$, we conclude that $(\mathbf{a}^k,\mathbf{b}^k)^\top$ is determined by a system of $2k$ ordinary differential equations whose right-hand side depends continuously on $\mathbf{a}^k$, $\mathbf{b}^k$ and $\mathbf{c}^k$. Due to \eqref{WFD:INI:1} and \eqref{WFD:INI:2}, this ODE system is subject to the initial condition
\begin{alignat*}{2}
    [\mathbf a^k]_i(0) &= a_i^k(0) = \bigscp{\v_0}{\w_i}_{\bL^2(\Omega)},
    &&\quad i\in\{1,...,k\},
    \\
    [\mathbf b^k]_i(0) &= b_i^k(0) = \bigscp{(\phi_0,\Psi_0)}{(\zeta_i,\xi_i)}_{\LL^2},
    &&\quad i\in\{1,...,k\}.
\end{alignat*}
Moreover, testing \eqref{WFD:3} with $(\zeta_1,\xi_1),...,(\zeta_k,\xi_k)$, we infer that $\mathbf{c}^k$ is explicitly given by a system of $k$ algebraic equations whose right-hand side depends continuously on $\mathbf{b}^k$. We now replace the variable $\mathbf{c}^k$ appearing in the right-hand side of the aforementioned ODE system to obtain a closed $2k$-dimensional ODE system describing $(\mathbf{a}^k,\mathbf{b}^k)^\top$ whose right-hand side depends continuously on $(\mathbf{a}^k,\mathbf{b}^k)^\top$. We can thus apply the Cauchy--Peano theorem to infer the existence of at least one local solution
$(\mathbf{a}^k,\mathbf{b}^k)^\top: [0,T_k^*)\cap [0,T] \to \R^{2k}$
with $T_k^*>0$ to the corresponding initial value problem. Without loss of generality, we assume that $T_k^*\le T$ and that $(\mathbf{a}^k,\mathbf{b}^k)^\top$ is right-maximal, meaning that $T_k^*$ is chosen as large as possible. Now, we can reconstruct $\mathbf{c}^k:[0,T_k^*) \to \R^{k}$ by the aforementioned system of $k$ algebraic equations. In view of \eqref{DEF:APPROX}, we thus obtain functions
\begin{align*}
    \v_k \in C^1\big( [0,T_k^*) ; \bH^1_{\Div}(\Omega) \big),
    \quad
    (\phi_k,\Psi_k) 
    \in C^1\big( [0,T_k^*) ; \Da \big),
    \quad
    (\mu_k,\Theta_k)
    \in C^1\big( [0,T_k^*) ; \Da \big),
\end{align*}
which fulfill the discretized weak formulation \eqref{WFD} on the time interval $[0,T_k^*)$. 

\noindent\textbf{Step 2: Uniform estimates.}
We establish suitable estimates for each approximate solution $(\v_k,\phi_k,\psi_k,\mu_k,\theta_k)$ which are uniform with respect to the index $k$.
In particular, let $T_k<T_k^*$ be arbitrary. In the following, let $C$ denote generic positive constants depending only on the initial data and the constants introduced in \eqref{ass:const}--\eqref{ass:vis-fric} including the final time $T$, but is independent of $k$ and $T_k$.

Testing \eqref{WFD:1} with $\v_k$, \eqref{WFD:2} with $(\mu_k,\theta_k)$, and \eqref{WFD:3} with $-(\delt\phi_k,\delt\Psi_k)$, adding the resulting equations, and integrating with respect to time from $0$ to $t$,
we derive the discretized energy identity
\begin{align}
\label{DISC:ENERGY}
\begin{aligned}
&E\big(\v_k(t),\phi_k(t),\alpha\Psi_k(t))
+ \int_0^t \intO 2\nu(\phi_k)\, \abs{\D\v_k}^2 \dx \dt
+ \int_0^t \intG \gamma(\alpha\Psi_k) \abs{\v_k}^2 \dS \dt
 \\
&\qquad + \int_0^t \intO \mom(\phi_k) \abs{\Grad \mu_k}^2 \dx \dt
+ \int_0^t \intG n_\Ga(\psi_k) \abs{\Gradg \Theta_k}^2 \dS \dt \\[1ex]
&
= E\big(\v_k(0),\phi_k(0),\alpha\Psi_k(0)) 
\end{aligned}
\end{align}
for all $t\in[0,T_k]$. Here, $E$ stands for the total energy introduced in \eqref{DEF:ENERGY}. 
Due to the growth conditions on $F$ and $G$ (see \eqref{ass:pot}), the Sobolev embeddings $H^1(\Omega)\hookrightarrow L^6(\Omega)$ and $H^1(\Ga)\hookrightarrow L^p(\Ga)$ for any $1\leq p<\infty$, and the properties of the projections $\mathbb{P}_{\UU_k}$ and $\mathbb{P}_{\ZZK}$,
we use the initial conditions \eqref{WFD:INI:1} and \eqref{WFD:INI:2} to infer that
\begin{align*}
   &E\big(\v_k(0),\phi_k(0),\alpha\Psi_k(0)) 
   \notag\\
   &\leq \frac12 \norm{\mathbb{P}_{\UU_k}(\v_0)}_{L^2(\Omega)}^2
   +\frac12 \norm{\nabla \mathbb{P}^\Om_{\ZZK}(\phi_0,\Psi_0)}_{L^2(\Omega)}^2
   +\frac{\alpha^2}2 \norm{\nabla \mathbb{P}^\Ga_{\ZZK}(\phi_0,\Psi_0)}_{L^2(\Ga)}^2
   \notag\\
   &\quad + c_F \left( |\Omega|+ \norm{\mathbb{P}^\Om_{\ZZK}(\phi_0,\Psi_0)}_{L^p(\Omega)}^p\right)
   + c_G \left( |\Ga|+ \norm{\mathbb{P}^\Ga_{\ZZK}(\phi_0,\Psi_0)}_{L^q(\Ga)}^q\right) \notag
   \\[1ex]
   & \leq \frac12 \norm{\v_0}_{L^2(\Omega)}^2
   + C \norm{(\phi_0,\Psi_0)}_{\HH^1}^2
   \notag\\
   &\quad + C \left( 1 + \norm{\mathbb{P}^\Om_{\ZZK}(\phi_0,\Psi_0)}_{H^1(\Omega)}^p\right)
   + C \left( 1 + \norm{\mathbb{P}^\Ga_{\ZZK}(\phi_0,\Psi_0)}_{H^1(\Ga)}^q\right) \notag\\[1ex]
   & \leq \frac12 \norm{\v_0}_{L^2(\Omega)}^2
   + C \norm{(\phi_0,\Psi_0)}_{\HH^1}^2
   \notag\\
   &\quad + C \left( 1 + \norm{(\phi_0,\Psi)}_{\HH^1}^p\right)
   + C \left( 1 + \norm{(\phi_0,\Psi_0)}_{\HH^1}^q\right), \notag
\end{align*}
which, in turn, entails that
\begin{align}
\label{EST:EK}
    E\big(\v_k(0),\phi_k(0),\alpha\Psi_k(0)) \le C.
\end{align}
Recalling that $F,G \ge 0$ and that the functions $\nu$, $\gamma$, $\mom$ and $\mga$ (and thus also $n_\Ga$) are uniformly positive (see \eqref{ass:mob}--\eqref{ass:vis-fric}), we conclude the uniform bounds
\begin{align}
    \label{EST:AP:1}
    \norm{\v_k}_{L^\infty(0,T_k;L^2(\Om))} 
    + \norm{\Grad \v_k}_{L^2(0,T_k;L^2(\Om))} 
    + \norm{\v_k}_{L^2(0,T_k;L^2(\Ga))}
    &\le C,
    \\
    \label{EST:AP:2}
    \norm{\Grad\phi_k}_{L^\infty(0,T_k;L^2(\Om))} 
    + \norm{\Gradg\Psi_k}_{L^\infty(0,T_k;L^2(\Ga))}
    &\le C,
    \\
    \label{EST:AP:3}
    \norm{\Grad\mu_k}_{L^2(0,T_k;L^2(\Om))} 
    + \norm{\Gradg\Theta_k}_{L^2(0,T_k;L^2(\Ga))}
    &\le C.
\end{align}
Testing \eqref{WFD:1} with the first eigenfunction $(\zeta_1,\xi_1)$ (see \eqref{1STEIG}), we obtain
\begin{align*}
\ddt \left( \alpha \int_\Omega \phi_k(t) \dx + \int_\Ga \Psi_k(t) \dS \right)=0,
\end{align*}
which implies
\begin{equation}
    \label{cons-mass}
    \alpha \intO \phi_k(t) \dx + \intG \Psi_k(t) \dS
        = \alpha \intO \phi_0 \dx + \intG \Psi_0 \dS,
\end{equation}
for all $t\in[0,T_k]$.
Here, we have used that 
\begin{align*}
&\alpha \abs{\Omega}\mean{\mathbb{P}^\Om_{\ZZK}(\phi_0,\Psi_0)}_\Om + \abs{\Gamma}\mean{\mathbb{P}^\Ga_{\ZZK}(\phi_0,\Psi_0)}_\Ga
= \bigscp{\mathbb{P}_{\ZZK}(\phi_0,\Psi_0)}{(\alpha,1)}_{\LL^2} 
\\
&\quad = \bigscp{(\phi_0,\Psi_0)}{(\alpha,1)}_{\LL^2} 
= \alpha \abs{\Omega}\mean{\phi_0}_\Om + \abs{\Gamma}\mean{\Psi_0}_\Ga,
\end{align*}
which follows by means of \eqref{1STEIG} and \eqref{EIG:MEAN}.
Defining the constant
$$
    c_m := \frac{\alpha\abs{\Om}\mean{\phi_0}_\Om + \abs{\Ga}\mean{\Psi_0}_\Ga}{\alpha^2\abs{\Om} + \abs{\Ga}},
$$
we have $(\alpha c_m,c_m) \in \Da$ and invoking \eqref{cons-mass}, a straightforward computation yields  
\begin{align*}
    \big(\phi_k(t)-\alpha c_m, \Psi_k(t) - c_m\big) \in \Da
    \quad\text{and}\quad
    \alpha \abs{\Om}\mean{\phi_k(t)-\alpha c_m}_\Om + \abs{\Ga}\mean{\Psi_k(t) - c_m}_\Ga
    = 0
\end{align*}
for all $t\in[0,T_k]$.
Hence, applying the bulk-surface variant of the Poincar\'e inequality \eqref{bs-Poincare} (with $\beta=\alpha^2$), we deduce that
\begin{align*}
    \norm{(\phi_k,\Psi_k)}_{\LL^2}
    &\leq \norm{(\phi_k - \alpha c_m,\Psi_k - c_m)}_{\LL^2} 
        + \norm{(\alpha c_m,c_m)}_{\LL^2}
    \\[1ex]
    &\leq C_P \norm{(\Grad \phi_k, \Grad_\Ga \Psi_k)}_{\LL^2}
    +C
\end{align*}
for all $t\in[0,T_k]$. 
In view of \eqref{EST:AP:2}, we thus conclude
\begin{align}
    \label{EST:AP:4}
    \norm{\phi_k}_{L^\infty(0,T_k;H^1(\Om))} 
    + \norm{\Psi_k}_{L^\infty(0,T_k;H^1(\Ga))}
    &\le C.
\end{align}

Next, we derive a uniform estimate for $(\mu_k,\Theta_k)$ in the full $\HH^1$-norm. For this aim, we test \eqref{WFD:3} by
$(\zeta,\xi):=\mathbb{P}_{\ZZK}(\zeta^*,\xi^*)$, where $(\zeta^*,\xi^*)\in \Da$ is an arbitrary test function. 
By the growth conditions \eqref{GR:F'}--\eqref{GR:G'} from \eqref{ass:pot} and the Sobolev embeddings $H^1(\Omega)\hookrightarrow L^6(\Omega)$ and $H^1(\Ga)\hookrightarrow L^p(\Ga)$ for any $1\leq p<\infty$, we have
\begin{align}
\begin{aligned}
  \abs{\bigang{(\mu_k,\Theta_k)}{(\zeta^*,\xi^*)}_{\Da}}
  &= \abs{\bigang{(\mu_k,\Theta_k)}{(\zeta,\xi)}_{\Da}}
    \\[1ex]
  &\leq 
  \norm{\Grad\phi_k}_{L^2(\Omega)} \norm{\Grad \zeta}_{L^2(\Omega)} 
  +\norm{F'(\phi_k)}_{L^\frac65(\Omega)} \norm{\zeta}_{L^6(\Omega)}
   \\
  &\quad 
  + \alpha^2 \norm{\Gradg\Psi_k}_{L^2(\Gamma)} \norm{\Gradg \xi}_{L^2(\Gamma)} 
  + \alpha\norm{G'(\alpha\Psi_k)}_{L^2(\Gamma)} \norm{\xi}_{L^2(\Gamma)}   
  \\
  &\leq C\left( 1+ \norm{\Grad\phi_k}_{L^2(\Omega)} 
  + \norm{\phi_k}_{L^6(\Omega)}^5
  \right) \norm{(\zeta^*,\xi^*)}_{\HH^1}
   \\
  &\quad 
  +C \left(1+ \norm{\Gradg \Psi_k}_{L^2(\Gamma)} 
  + \norm{\Psi_k}_{L^{2q}(\Gamma)}^q
  \right)\norm{(\zeta^*,\xi^*)}_{\HH^1}
   \\
  &\leq 
  C\left( 1+ \norm{\phi_k}_{H^1(\Omega)}^5 
  + \norm{\Psi_k}_{H^1(\Ga)}^q
  \right) \norm{(\zeta^*,\xi^*)}_{\HH^1},
  \end{aligned}
  \end{align}
 for all $t\in[0,T_k]$.
 Taking the supremum over all $(\zeta^*,\xi^*) \in \Da$ with $\norm{(\zeta^*,\xi^*)}_{\Da} \le 1$, 
and exploiting \eqref{EST:AP:4}, we find that 
\begin{equation}
\label{EST:AP:5-2}
\norm{(\mu_k,\Theta_k)}_{L^\infty(0,T_k;\DD_\alpha')}\leq C.
\end{equation}
By a duality argument, we infer the existence of a positive constant $C_{\Da}$ such that 
\begin{equation}
\label{DUAL-EST}
\norm{(\bar\zeta,\bar\xi)}_{\LL^2}^2 
= \bigang{(\bar\zeta,\bar\xi)}{(\bar\zeta,\bar\xi)}_{\Da}
\leq 
C_{\Da} \norm{(\bar\zeta,\bar\xi)}_{\DD_\alpha'} \norm{(\Grad \bar\zeta, \Gradg \bar\xi)}_{\LL^2} + C_{\Da} \norm{(\bar\zeta,\bar\xi)}_{\DD_\alpha'}^2
\end{equation}
for all $(\bar\zeta,\bar\xi) \in \Da$.
Hence, applying \eqref{DUAL-EST} with $(\bar\zeta,\bar\xi)=(\mu_k,\Theta_k)$ and using \eqref{EST:AP:3} and \eqref{EST:AP:5-2}, we arrive at
\begin{equation}
\label{EST:AP:5}
\norm{(\mu_k,\Theta_k)}_{L^4(0,T_k;\LL^2)}\leq C.
\end{equation}
In addition, we infer from \eqref{EST:AP:3} and \eqref{EST:AP:5} that
\begin{align}
    \label{EST:AP:6}
    \norm{(\mu_k,\Theta_k)}_{L^2(0,T_k;\HH^1)} 
    &\le C.
\end{align}

Lastly, we derive uniform estimates on the time derivatives. Therefore, let $\w^*\in \bH^1_\Div(\Omega)$ and $(\zeta^*,\xi^*) \in \Da$ be arbitrary test functions. We now test \eqref{WFD:1} with $\w:=\mathbb{P}_{\UU_k}(\w^*)$. 

In three dimensions, recalling that the functions $\nu$ and $\gamma$ are bounded, we use Hölder's inequality, the continuous embeddings $H^1(\Om) \emb L^6(\Om)$ and $H^1(\Om) \emb L^4(\Ga)$ as well as the Gagliardo-Nirenberg interpolation inequality to obtain the estimate
\begin{align*}
\begin{aligned}
    \abs{ \bigang{\delt \v_k}{\w^*}_{\bH_\Div^1(\Omega)} }
    &= \abs{ \bigang{\delt \v_k}{\w}_{\bH_\Div^1(\Omega)} }
    \\[1ex]&
    \le \norm{\v_k}_{L^4(\Om)}^2 
        \norm{\nabla \w}_{L^2(\Om)}
    + C \norm{\Grad\v_k}_{L^2(\Om)} 
        \norm{\Grad\w}_{L^2(\Om)}
    \\&\qquad
    + \norm{\mu_k}_{L^6(\Om)}
        \norm{\Grad\phi_k}_{L^2(\Om)}
        \norm{\w}_{L^3(\Om)}
    + C \norm{\v_k}_{L^2(\Ga)} 
        \norm{\w}_{L^2(\Ga)}
    \\&\qquad
    + \norm{\Psi_k}_{L^4(\Ga)}
        \norm{\Grad \Theta_k}_{L^2(\Ga)}
        \norm{\w}_{L^4(\Ga)}
    \\[1ex]&
    \le C \norm{\v_k}_{L^2(\Om)}^\frac12 
      \norm{\v_k}_{H^1(\Om)}^\frac32
        \norm{\w^*}_{H^1(\Om)}
    + C \norm{\Grad\v_k}_{L^2(\Om)} 
        \norm{\w^*}_{H^1(\Om)}
    \\&\qquad
    + C \norm{\mu_k}_{H^1(\Om)}
        \norm{\Grad\phi_k}_{L^2(\Om)}
        \norm{\w^*}_{H^1(\Om)}
    + C \norm{\v_k}_{H^1(\Om)} 
        \norm{\w^*}_{H^1(\Om)}
    \\&\qquad
    + \norm{\Psi_k}_{H^1(\Ga)}
        \norm{\Theta_k}_{H^1(\Ga)}
        \norm{\w^*}_{H^1(\Om)}
\end{aligned}
\end{align*}
for all $t\in[0,T_k]$.
Taking the supremum over all $\w^*\in \bH^1_\Div(\Omega)$ with $\norm{\w^*}_{\bH^1_\Div(\Omega)} \le 1$, taking the power $\frac43$ and integrating over $[0,T_k]$, we now deduce that 
\begin{align*}
\begin{aligned}
    \norm{\delt\v_k}_{L^\frac43(0,T_k;\bH^1_\Div(\Omega)')}^\frac43 
    & \le C \norm{\v_k }_{L^\infty(0,T_k; L^2(\Om))}^\frac23 
      \norm{\v_k }_{L^2(0,T_k;H^1(\Om))}^2 
     \\&\quad
      + C\, T_k^\frac13 \norm{\v_k }_{L^2(0,T_k;H^1(\Om))}^\frac43 
     \\&\quad
     + C\, T_k^\frac13 \| \phi_k\|_{L^\infty(0,T_k; H^1(\Om))}^\frac43 \| \mu_k\|_{L^2(0,T_k;H^1(\Om))}^\frac43
     \\&\quad
     + C\, T_k^\frac13 \| \Psi_k\|_{L^\infty(0,T_k; H^1(\Ga))}^\frac43 \|\Theta_k\|_{L^2(0,T_k; H^1(\Ga))}^\frac43. 
     \end{aligned}
\end{align*}
By exploiting \eqref{EST:AP:1}, \eqref{EST:AP:4} and \eqref{EST:AP:6}, and recalling that $T_k\leq T$, we simply reach
 \begin{align}
    \label{EST:AP:7}
    \norm{\delt\v_k}_{L^\frac43(0,T_k;\bH^1_\Div(\Omega)')} \le C, \quad \text{if $d=3$}.
\end{align}
In two dimensions, arguing as above and using the Ladyzhenskaya inequality for the convective term, we first obtain
\begin{align*}
    \begin{aligned}
    \abs{ \bigang{\delt \v_k}{\w^*}_{\bH_\Div^1(\Omega)} }
    &= \abs{ \bigang{\delt \v_k}{\w}_{\bH_\Div^1(\Omega)} }
    \\[1ex]&
    \le C \norm{\v_k}_{L^2(\Om)}
      \norm{\v_k}_{H^1(\Om)}
        \norm{\w^*}_{H^1(\Om)}
    + C \norm{\Grad\v_k}_{L^2(\Om)} 
        \norm{\w^*}_{H^1(\Om)}
    \\&\qquad
    + C \norm{\mu_k}_{H^1(\Om)}
        \norm{\Grad\phi_k}_{L^2(\Om)}
        \norm{\w^*}_{H^1(\Om)}
    + C \norm{\v_k}_{H^1(\Om)} 
        \norm{\w^*}_{H^1(\Om)}
    \\&\qquad
    + \norm{\Psi_k}_{H^1(\Ga)}
        \norm{\Theta_k}_{H^1(\Ga)}
        \norm{\w^*}_{H^1(\Om)}.
\end{aligned}
\end{align*}
Thus, after taking the supremum over all $\w^*\in \bH^1_\Div(\Omega)$ with $\norm{\w^*}_{\bH^1_\Div(\Omega)} \le 1$, taking the square and integrating over $[0,T_k]$, we find
\begin{align*}
\begin{aligned}
    \norm{\delt\v_k}_{L^2(0,T_k;\bH^1_\Div(\Omega)')}^2 
    & \le C \norm{\v_k }_{L^\infty(0,T_k; L^2(\Om))}^2
      \norm{\v_k }_{L^2(0,T_k;H^1(\Om))}^2 
      + C \norm{\v_k }_{L^2(0,T_k;H^1(\Om))}^2 
     \\&\quad
     + C  \| \phi_k\|_{L^\infty(0,T_k; H^1(\Om))}^2 \| \mu_k\|_{L^2(0,T_k;H^1(\Om))}^2
     \\&\quad
     + C \| \Psi_k\|_{L^\infty(0,T_k; H^1(\Ga))}^2 \|\Theta_k\|_{L^2(0,T_k; H^1(\Ga))}^2,
     \end{aligned}
\end{align*}
which immediately entails that 
\begin{align}
    \label{EST:AP:7-2D}
    \norm{\delt\v_k}_{L^2(0,T_k;\bH^1_\Div(\Omega)')} \le C, \quad \text{if $d=2$}.
\end{align}

Concerning the bulk and surface phase-fields, we test \eqref{WFD:2} with $(\zeta,\xi):=\mathbb{P}_{\ZZK}(\zeta^*,\xi^*)$.
Recalling that the functions $\mom$ and $\mga$ (and thus also $n_\Ga$) are bounded, we use Hölder's inequality along with the continuous embeddings $H^1(\Omega) \emb L^4(\Omega)$, $H^1(\Gamma) \emb L^4(\Gamma)$ and $H^1(\Omega) \emb L^4(\Gamma)$ to infer that
\begin{align*}
\begin{aligned}
    \abs{\bigang{(\delt \phi_k,\delt \Psi_k)}{(\zeta^*,\xi^*)}_{\Da}}
    &= \abs{\bigang{(\delt \phi_k,\delt \Psi_k)}{(\zeta,\xi)}_{\Da}}
    \\[1ex]&
    \le \norm{\phi_k}_{H^1(\Om)} 
        \norm{\v_k}_{H^1(\Om)}
        \norm{(\zeta^*,\xi^*)}_{\HH^1}
        + C \norm{\mu_k}_{H^1(\Om)}
        \norm{(\zeta^*,\xi^*)}_{\HH^1}
    \\&\quad 
        + \norm{\Psi_k}_{H^1(\Ga)} 
        \norm{\v_k}_{H^1(\Omega)}
        \norm{(\zeta^*,\xi^*)}_{\HH^1}
        + C \norm{\Theta_k}_{H^1(\Ga)}
        \norm{(\zeta^*,\xi^*)}_{\HH^1}.
    \end{aligned}
\end{align*}
Hence, after taking the supremum over all $(\zeta^*,\xi^*) \in \Da$ with $\norm{(\zeta^*,\xi^*)}_{\Da} \le 1$, we take the square of the resulting inequality and integrate over $[0,T_k]$. This yields
\begin{align*}
    \norm{(\delt\phi_k,\delt\Psi_k)}_{L^2(0,T_k;\DD_\alpha')}^2
        &\le \norm{\phi_k}_{L^\infty(0,T;H^1(\Om))}^2 
        \norm{\v_k}_{L^2(0,T;H^1(\Om))}^2
        + C \norm{\mu_k}_{L^2(0,T;H^1(\Om))}^2
    \notag\\&\qquad
        + \norm{\Psi_k}_{L^\infty(0,T;H^1(\Ga))}^2 
        \norm{\v_k}_{L^2(0,T;H^1(\Om))}^2
        + C \norm{\Theta_k}_{L^2(0,T;H^1(\Ga))}^2.
\end{align*}
In view of the uniform estimates \eqref{EST:AP:1}, \eqref{EST:AP:4} and \eqref{EST:AP:6}, we thus conclude
\begin{align}
    \label{EST:AP:8}
    \norm{(\delt\phi_k,\delt\Psi_k)}_{L^2(0,T_k; \DD_\alpha')} \le C.
\end{align}

\noindent\textbf{Step 3: Extension of the approximate solution onto $[0,T]$.}
We recall from Step~1 that the coefficient vector 
$(\mathbf a^k,\mathbf b^k)^\top$ is determined as a solution of a nonlinear ODE system. Using the definition of the approximate solutions given in \eqref{DEF:APPROX} as well as the uniform a priori estimates \eqref{EST:AP:1} and \eqref{EST:AP:4}, we obtain that for any $T_k\in[0,T_k^*)$, all $t\in [0,T_k]$, and all $i\in\{1,...,k\}$, 
\begin{align*}
    |a_i^k(t)| + |b_j^k(t)| 
    &= \bigabs{ \scp{\v_k(t)}{\w_i}_{L^2(\Omega)} } 
    + \bigabs{\scp{\big(\phi_k(t),\Psi_k(t)\big)}{(\zeta_i,\xi_i)}_{\LL^2}} \\
    &\le \norm{\v_k}_{L^\infty(0,T_k;L^2(\Omega))}
    + \norm{(\phi_k,\Psi_k)}_{L^\infty(0,T_k;\LL^2)} 
    \le C.
\end{align*}
This means that the solution $(\mathbf a^k,\mathbf b^k)^\top$ is bounded on the time interval $[0,T_k^*)$ by a constant that does neither depend on $T_k$ nor on $k$.
Consequently, by the classical ODE theory, the solution can thus be extended beyond the time $T_k^*$. However, as $(\mathbf a^k,\mathbf b^k)^\top$ was assumed to be a right-maximal solution, meaning that $T_k$ was assumed to be chosen as large as possible, this is a contradiction.
We thus conclude that the solution $(\mathbf a^k,\mathbf b^k)^\top$ actually exists on the whole time interval $[0,T]$. As the coefficients $\mathbf c^k$ can be reconstructed from $(\mathbf a^k,\mathbf b^k)^\top$ via the vector-valued algebraic equation mentioned in Step~1, we further infer that the vector-valued coefficient function $(\mathbf c^k)^\top$ also exists on $[0,T]$. This directly entails that the approximate solution $(\v_k,\phi_k,\Psi_k,\mu_k,\Theta_k)$ actually exists on $[0,T]$ and satisfies the discretized weak formulation \eqref{WFD} everywhere in $[0,T]$.
Moreover, as the choice of $T_k$ did not play any role in the derivation of the uniform estimates, we further conclude that the estimates \eqref{EST:AP:1}, \eqref{cons-mass}, \eqref{EST:AP:4}, \eqref{EST:AP:5}, \eqref{EST:AP:6}, \eqref{EST:AP:7}, \eqref{EST:AP:7-2D} and \eqref{EST:AP:8} remain true when $T_k$ is replaced by $T$. In summary, we thus know that the approximate weak solution $(\v_k,\phi_k,\Psi_k,\mu_k,\Theta_k)$ satisfies the uniform estimates 
\begin{subequations}
\label{EST:AP:FINAL}
\begin{align}
    \label{EST:AP:3D}
    \begin{aligned}
    &\norm{\delt\v_k}_{L^\frac43(0,T;\bH^1_\Div(\Omega)')}
    + \norm{\v_k}_{L^\infty(0,T;L^2(\Om))} 
    + \norm{\v_k}_{L^2(0,T;H^1(\Om))} 
    + \norm{\v_k}_{L^2(0,T;L^2(\Ga))}
    \\&\;\; 
    + \norm{(\delt\phi_k,\delt\Psi_k)}_{L^2(0,T;\DD_\alpha')}
    + \norm{(\phi_k,\Psi_k)}_{L^\infty(0,T;\DD_\alpha)} 
    \\&\;\;
    + \norm{(\mu_k,\Theta_k)}_{L^4(0,T;\LL^2)}
    + \norm{(\mu_k,\Theta_k)}_{L^2(0,T;\HH^1)}
    \le C, \quad \text{if $d=3$},
    \end{aligned}
    \\[1ex]
    \label{EST:AP:2D}
    \begin{aligned}
    &\norm{\delt\v_k}_{L^2(0,T;\bH^1_\Div(\Omega)')}
    + \norm{\v_k}_{L^\infty(0,T;L^2(\Om))} 
    + \norm{\v_k}_{L^2(0,T;H^1(\Om))} 
    + \norm{\v_k}_{L^2(0,T;L^2(\Ga))}
    \\&\;\; 
    + \norm{(\delt\phi_k,\delt\Psi_k)}_{L^2(0,T;\DD_\alpha')}
    + \norm{(\phi_k,\Psi_k)}_{L^\infty(0,T;\DD_\alpha)} 
    \\&\;\;
    + \norm{(\mu_k,\Theta_k)}_{L^4(0,T;\LL^2)}
    + \norm{(\mu_k,\Theta_k)}_{L^2(0,T;\HH^1)}
    \le C, \quad \text{if $d=2$}.
    \end{aligned}
\end{align}
\end{subequations}

\noindent\textbf{Step 4: Convergence to a weak solution.}
In view of the uniform estimates \eqref{EST:AP:3D} and \eqref{EST:AP:2D}, the Banach--Alaoglu theorem and the Aubin--Lions--Simon lemma (see, e.g., \cite[Theorem~II.5.16]{boyer}) imply the existence of functions $\v$, $\phi$, $\Psi$, $\mu$ and $\theta$ 
such that
\begin{alignat}{2}
    \label{CONV:1}
    \delt\v_k &\to \delt\v
    &&\quad\text{weakly in $L^\frac43(0,T;\bH^1_\Div(\Omega)')$ if $d=3$,}
    \\
    &&&\quad\text{weakly in $L^2(0,T;\bH^1_\Div(\Omega)')$ if $d=2$,}
    \\[1ex]
    \label{CONV:2}
    \v_k &\to \v
    &&\quad\text{weakly-star in $L^\infty(0,T;\bL_\Div^2(\Omega))$,}
    \notag \\
    &&&\quad\text{weakly in $L^2(0,T;\bH^1(\Omega)) \cap L^2(0,T;\bL^2(\Gamma))$,}
    \notag \\
    &&&\quad\text{strongly in $L^2(0,T; \bL_\Div^2(\Omega))\cap C([0,T];\bH^1_\Div(\Omega)')$,}
    \\[1ex]
    \label{CONV:3}
    (\delt\phi_k,\delt\Psi_k) &\to (\delt\phi,\delt\Psi) 
    &&\quad\text{weakly in $L^2(0,T;\DD_\alpha')$,}
    \\[1ex]
    \label{CONV:4}
    (\phi_k,\Psi_k) &\to (\phi,\Psi) 
    &&\quad\text{weakly-star in $L^\infty(0,T;\Da)$,}
    \notag \\
    &&&\quad\text{strongly in $C([0,T];\HH^s)$ for all $s\in [0,1)$,}
    \\[1ex]
    \label{CONV:5}
    (\mu_k,\Theta_k) &\to (\mu,\Theta)
    &&\quad\text{weakly in $L^4(0,T;\LL^2)\cap L^2(0,T;\Da)$,}
\end{alignat}
as $k\to\infty$, along a non-relabeled subsequence. In view of \eqref{CofV}, we set $\psi:=\alpha\Psi$ and $\theta:=\alpha^{-1}\Theta$.
Using Sobolev's embedding theorem, we infer from \eqref{CONV:4} that
\begin{alignat}{2}
    \label{CONV:6}
    \phi_k &\to \phi
    &&\quad\text{strongly in $C([0,T];L^r(\Omega))$ for all $r\in [2,6)$, and a.e.~in $Q$,}
    \\[1ex]
    \label{CONV:7}
    \Psi_k &\to \Psi
    &&\quad\text{strongly in $C([0,T];L^r(\Gamma))$ for all $r\in [2,\infty)$, and a.e.~on $\Sigma$,}
\end{alignat}
as $k\to\infty$, after another subsequence extraction. Recalling the growth assumptions on $G$ and $G'$ imposed in \eqref{ass:pot}, we use the above convergences along with Lebesgue's general convergence theorem (see \cite[Section~3.25]{Alt}) to deduce that
\begin{alignat}{2}
    \label{CONV:9}
    G(\alpha\Psi_k) &\to G(\alpha\Psi)
    &&\quad\text{strongly in $L^1(\Sigma)$, and a.e.~on $\Sigma$,}
    \\
    \label{CONV:10}
    G'(\alpha\Psi_k) &\to G'(\alpha\Psi)
    &&\quad\text{strongly in $L^2(\Sigma)$, and a.e.~on $\Sigma$,}
\end{alignat}
as $k\to\infty$. Moreover, using the growth assumptions on $F'$ from \eqref{ass:pot}, the uniform estimate \eqref{EST:AP:4} and the continuous embedding $H^1(\Omega)\emb L^6(\Omega)$, we infer that $F'(\phi_k)$ is bounded in $L^{6/5}(Q)$ uniformly in $k\in\N$. Hence, there exists a function $f\in L^{6/5}(Q)$  such that $F'(\phi_k) \to f$ weakly in $L^{6/5}(Q)$ as $k\to\infty$. In view of \eqref{CONV:6}, we also have $F'(\phi_k) \to F'(\phi)$ a.e.~in $Q$ as $k\to\infty$. By a convergence principle based on Egorov's theorem (see \cite[Proposition~9.2c]{DiBenedetto}), we infer $f=F'(\phi)$. We have thus shown that
\begin{alignat}{2}
    \label{CONV:11}
    F'(\phi_k) &\to F'(\phi)
    &&\quad\text{weakly in $L^{6/5}(Q)$, and a.e.~in $Q$,}
\end{alignat}
as $k\to\infty$.
Furthermore, it follows from \eqref{CONV:6} and \eqref{CONV:7} that
\begin{alignat}{3}
    \label{CONV:12}
    \nu(\phi_k) &\to \nu(\phi)
    &\quad\text{and}\quad
    \mom(\phi_k) &\to \mom(\phi)
    &&\quad\text{a.e. in $Q$,}
    \\
    \label{CONV:13}
    \gamma(\alpha\Psi_k) &\to \gamma(\alpha\Psi)
    &\quad\text{and}\quad
    \nga(\Psi_k) &\to \nga(\Psi)
    &&\quad\text{a.e. in  $\Sigma$,}
\end{alignat}
as $k\to\infty$. Moreover, we also have from \eqref{ass:mob} and \eqref{ass:vis-fric} that
 \begin{alignat}{3}
    \label{CONV:14}
    \nu(\phi_k) &\to \nu(\phi)
    &\quad\text{and}\quad
    \mom(\phi_k) &\to \mom(\phi)
    &&\quad\text{in $L^r(Q)$,}
    \\
    \label{CONV:15}
    \gamma(\alpha\Psi_k) &\to \gamma(\alpha\Psi)
    &\quad\text{and}\quad
    \nga(\Psi_k) &\to \nga(\Psi)
    &&\quad\text{in $L^r(\Sigma)$,}
\end{alignat}
for all $r\in [2,\infty)$, as $k\to\infty$. Lastly, by \eqref{ass:mob} and \eqref{ass:vis-fric}, the weak-strong convergence principle and the Lebesgue convergence theorem, we infer that
\begin{alignat}{3}
    \label{CONV:DISS:1}
    \nu(\phi_k)\; \D\v_k &\to \nu(\phi)\; \D\v
    &&\quad\text{weakly in $L^2(Q)$,}
    \\
    \label{CONV:DISS:2}
    \mom(\phi_k)\; \Grad\phi_k &\to \mom(\phi)\; \Grad\phi
    &&\quad\text{weakly in $L^2(Q)$,}
    \\
    \label{CONV:DISS:3}
    \gamma(\alpha \Psi_k)\; \v_k &\to \gamma(\alpha \Psi)\; \v
    &&\quad\text{weakly in $L^2(\Sigma)$,}
    \\
    \label{CONV:DISS:4}
    \nga(\Psi_k)\; \Gradg\Psi_k &\to \nga(\Psi)\;\Gradg\Psi
    &&\quad\text{weakly in $L^2(\Sigma)$,}
\end{alignat}
as $k\to\infty$.

We now multiply all equations of the discretized weak formulation \eqref{WFD} by an arbitrary test function $\sigma\in C_0^\infty([0,T])$ and integrate the resulting equations with respect to time from $0$ to $T$.
Using the convergences \eqref{CONV:1}--\eqref{CONV:DISS:4}, we can then pass to the limit $k\to\infty$ to conclude that
\begin{subequations}
    \label{WFD*}
    \begin{align}
    \label{WFD*:1}
    &\begin{aligned}
    &\int_0^T \ang{\delt \v}{\w_i}_{\bH_\Div^1(\Omega)} \;\sigma\dt
        - \int_Q (\v \otimes \v) \Grad\w_i \;\sigma\dx\dt 
        + \int_Q 2\nu(\phi) \D\v : \D\w_i \;\sigma\dx\dt
    \\
    &\quad
    = \int_Q \mu \Grad\phi \cdot \w_i \;\sigma\dx\dt
    - \int_\Sigma \gamma(\alpha\Psi) \v\cdot \w_i \;\sigma
        + \Psi\Gradg\Theta \cdot \w_i \;\sigma\dS\dt,
    \end{aligned}
    \\[1ex]
    \label{WFD*:2}
    &\begin{aligned}
    &\int_0^T \bigang{(\delt \phi,\delt \Psi)}{(\zeta_i,\xi_i)}_{\Da} \;\sigma\dt
        -  \int_Q \phi\v \cdot \Grad\zeta_i \;\sigma\dx\dt
        - \int_\Sigma \Psi\v \cdot \Gradg\xi_i \;\sigma\dS\dt
    \\     
    &\quad= -  \int_Q m_\Om(\phi) \Grad \mu \cdot \Grad \zeta_i \;\sigma\dx\dt
    	-\int_\Sigma \nga(\Psi) \Gradg \Theta \cdot \Gradg \xi_i \;\sigma\dS\dt,
    \end{aligned}	
    \\[1ex]
    \label{WFD*:3}
    &\begin{aligned}
    &\int_Q \mu \, \zeta_i \;\sigma\dx\dt     
        + \int_\Sigma \Theta \, \xi_i \;\sigma\dS\dt
    \\
    &\quad 
    = \int_Q \Grad\phi \cdot \Grad \zeta_i \;\sigma
        + F'(\phi) \zeta_i \;\sigma\dx\dt
        + \int_\Sigma \alpha^2 \Gradg\Psi \cdot \Gradg \xi_i \;\sigma 
        + \alpha G'(\alpha \Psi)\xi_i \;\sigma\dS\dt
    \end{aligned}
    \end{align}
\end{subequations}
hold for all $i\in\N$ and all test functions $\sigma \in C_0^\infty([0,T])$. Since $\mathrm{span}\{\w_i \,\vert\, i\in\N \}$ is dense in $\bH^1_\Div(\Omega)$ and $\mathrm{span}\{(\zeta_i,\xi_i) \,\vert\, i\in\N \}$ is dense in $\Da$, this proves that the quintuplet $(\v,\phi,\Psi,\mu,\Theta)$ satisfies the weak formulation \eqref{WF} for all test functions $\w\in \bH^1_\Div(\Omega)$ and $(\zeta,\xi),(\eta,\vartheta)\in\Da$. Reversing the change of variables \eqref{CofV}, we conclude that Definition~\ref{DEF:WS}\ref{DEF:WS:WF} is fulfilled.

Integrating the weak formulation \eqref{WF:2} with respect to time from $0$ to an arbitrary $t\in[0,T]$ and choosing $(\zeta,\xi)\equiv (\alpha,1)$, we obtain (cf. \eqref{cons-mass})
\begin{align*}
    & \alpha \intO \phi(t) \dx + \intG \Psi(t) \dS 
        =\alpha \intO \phi_0 \dx + \intG \Psi_0 \dS 
\end{align*}
By the change of variables \eqref{CofV}, this proves the mass conservation law \eqref{WS:MASS} meaning that the condition in Definition~\ref{DEF:WS}\ref{DEF:WS:MASS} is fulfilled.

Furthermore, it follows from Strauss' lemma (see \cite[Corollary~2.1]{Strauss}) that 
\begin{equation}
    \label{CONT:WEAK}
    \v \in C_w([0,T];\bL_\Div^2(\Omega)),\quad \text{and}\quad (\phi,\Psi)\in C_w([0,T]; \DD_\alpha).
\end{equation} 
In the two dimensional case, in view of the enhanced time integrability of $\partial_t \v$, we even obtain $\v \in C([0,T];\bL_\Div^2(\Omega))$. In summary, we conclude that the regularity condition \eqref{WS:REG:alpha} is satisfied and thus, by the change of variables \eqref{CofV}, Definition~\ref{DEF:WS}\ref{DEF:WS:REG} is fulfilled.

In view of \eqref{WFD:INI:1} and \eqref{WFD:INI:2}, we have
\begin{alignat}{2}
    \v_k(0) &\to \v_0 
    &&\quad\text{strongly in $\bL^2_\Div(\Omega)$},\\
    \big(\phi_k(0),\Psi_k(0)\big) &\to (\phi_0,\Psi_0)
    &&\quad\text{strongly in $\LL^2$},
\end{alignat}
as $k\to\infty$,
due to the convergence properties of the projections $\mathbb{P}_{\ZZK}$ and $\mathbb{P}_{\mathcal U_k}$.
On the other hand, we infer from \eqref{CONV:2} and \eqref{CONV:4} that
\begin{alignat}{2}
    \v_k(0) &\to \v(0) 
    &&\quad\text{strongly in $\bH^1_\Div(\Omega)'$},\\
    \big(\phi_k(0),\Psi_k(0)\big) &\to \big(\phi(0),\Psi(0)\big)
    &&\quad\text{strongly in $\LL^2$},
\end{alignat}
as $k\to\infty$. In particular, we know from \eqref{CONT:WEAK} that $\v(0) \in \bL^2_\Div(\Omega)$ and $(\phi(0),\Psi(0)) \in \DD_\alpha$. We thus deduce $\v(0) = v_0$ a.e.~in $\Omega$, $\phi(0) = \phi_0$ a.e.~in $\Omega$ and $\Psi(0) = \Psi_0$ a.e.~on $\Gamma$. By the change of variables \eqref{CofV}, we conclude that Definition~\ref{DEF:WS}\ref{DEF:WS:INI} is fulfilled.

To verify the weak energy dissipation law, let $\sigma \in C_0^\infty(0,T)$ be an arbitrary non-negative test function.
Multiplying \eqref{DISC:ENERGY} by $\sigma$ and integrating on $[0,T]$, we obtain
\begin{align}
\label{DISC:EN:test}
\begin{aligned}
    &\int_0^T E\big(\v_k(t),\phi_k(t),\alpha\Psi_k(t)) \, \sigma(t) \, \dt
    \\
    &\qquad
    + \int_0^T \int_0^t \intO \left( 2\nu(\phi_k)\, \abs{\D\v_k}^2 + m_\Om(\phi_k) |\Grad \mu_k|^2 \right) \, \sigma(t) \dx \dtau \dt
     \\
    &\qquad + \int_0^T \int_0^t \intG \left( \gamma(\alpha\Psi_k) \abs{\v_k}^2 + \nga(\Psi_k) \abs{\Gradg \Theta_k}^2\right) \, \sigma(t) \dS \dtau \dt 
     \\[1ex]
    &= \int_0^T E\big(\v_k(0),\phi_k(0),\alpha\Psi_k(0)) \, \sigma(t) \dt.
    \end{aligned}
\end{align}
By the strong convergence in \eqref{CONV:2}, it easily follows that 
\begin{align}
\label{CONV:EN:1}
     \int_0^T \left(\frac12 \norm{\v_k}_{L^2(\Omega)}^2\right)\sigma(t) \dt
     \to
     \int_0^T \left(\frac12 \norm{\v}_{L^2(\Omega)}^2\right)\sigma(t) \dt,
\end{align}
as $k \to \infty$.
Similarly, by \eqref{CONV:9}, we have
\begin{align}
\label{CONV:EN:2}
    \int_0^T \left(\int_\Ga G(\alpha\Psi_k)\dS \right) \sigma(t) \dt
    \to
    \int_0^T \left( \int_\Ga G(\alpha\Psi)\dS \right) \sigma(t) \dt,
\end{align}
as $k \to \infty$.
By means of \eqref{CONV:4}, we also infer 
    \begin{align}
    \label{CONV:EN:3}
    \begin{aligned}
    &\int_0^T \left( \frac12\norm{\nabla \phi}_{L^2(\Omega)}^2 +\frac12\norm{\alpha \nabla_\Ga \Psi}_{L^2(\Ga)}^2 \right) \sigma(t) \dt
     \\
    &\quad \leq \underset{k\to\infty}{\lim\inf}\;
    \int_0^T \left( \frac12\norm{\nabla \phi_k}_{L^2(\Omega)}^2 +\frac12\norm{\alpha \nabla_\Ga \Psi_k}_{L^2(\Ga)}^2 \right) \sigma(t) \dt. 
    \end{aligned}
    \end{align}
In addition, in light of \eqref{ass:pot} and \eqref{CONV:6}, Fatou's lemma implies 
    \begin{align}
    \label{CONV:EN:4}
    \int_0^T \left( \intO F(\phi) \dx \right) \sigma(t) \dt
    \leq \underset{k\to\infty}{\lim\inf}\;
    \int_0^T \left( \intO F(\phi_k) \dx \right) \sigma(t) \dt. 
    \end{align}
Combining \eqref{CONV:EN:1}--\eqref{CONV:EN:4}, we obtain 
\begin{align}
\label{LIMINF:E}
    \int_0^T E\big(\v(t),\phi(t),\alpha\Psi(t)\big)\, \sigma(t) \dt
    \le \underset{k\to\infty}{\lim\inf}\;  \int_0^T E\big(\v_k(t),\phi_k(t),\alpha\Psi_k(t)\big) \, \sigma(t) \dt.
\end{align}
Thanks to \eqref{CONV:DISS:1}--\eqref{CONV:DISS:4} and the weak lower-semicontinuity of norms with respect to weak convergence, another application of Fatou's lemma entails that 
\begin{align}
\label{LIMINF:D}
\begin{aligned}
&\int_0^T \int_0^t \intO \left( 2\nu(\phi)\, \abs{\D\v}^2 + m_\Om(\phi) |\Grad \mu|^2 \right) \, \sigma(t) \dx \dtau \dt
 \\
&\quad + \int_0^T \int_0^t \intG \left( \gamma(\alpha \Psi) \abs{\v}^2 + \nga(\Psi) \abs{\Gradg \Theta}^2\right) \, \sigma(t) \dS \dtau \dt 
 \\
&\leq \underset{k\to\infty}{\lim\inf}
\left(\int_0^T \int_0^t \intO \left( 2\nu(\phi_k)\, \abs{\D\v_k}^2 + \mom(\phi_k) |\Grad \mu_k|^2 \right) \, \sigma(t) \dx \dtau \dt \right.
 \\
&\quad \left. + \int_0^T \int_0^t \intG \left( \gamma(\alpha \Psi_k) \abs{\v_k}^2 + \nga(\Psi_k) \abs{\Gradg \Theta_k}^2\right) \, \sigma(t) \dS \dtau \dt \right).
\end{aligned}
\end{align}
Recalling the growth conditions on $F$ and $G$ (see \eqref{ass:pot}), we use the initial conditions \eqref{WFD:INI:1} and \eqref{WFD:INI:2} as well as the convergence properties of the projections $P_{\UU_k}$ and $P_{\ZZK}$ along with  Lebesgue's general convergence theorem to infer
\begin{align}
    \label{LIM:E0}
    E\big(\v_0,\phi_0,\alpha \Psi_0\big)  
    = \underset{k\to\infty}{\lim}\; E\big(\v_k(0),\phi_k(0),\alpha \Psi_k(0)\big). 
\end{align}
Therefore, we conclude from \eqref{EST:EK}, \eqref{DISC:EN:test}, \eqref{LIMINF:E}, \eqref{LIMINF:D} and \eqref{LIM:E0} that
\begin{align}
\label{EN:DISS:1}
\begin{aligned}
&\int_0^T E\big(\v(t),\phi(t),\alpha \Psi(t)\big) \, \sigma(t) \, \dt
\\
&\qquad + \int_0^T \int_0^t \intO \left( 2\nu(\phi)\, \abs{\D\v}^2 + m_\Om(\phi) |\Grad \mu|^2 \right) \, \sigma(t) \dx \dtau \dt
\\
&\qquad + \int_0^T \int_0^t \intG \left( \gamma(\alpha \Psi) \abs{\v}^2 + \mga(\Psi) \abs{\Gradg \Theta}^2\right) \, \sigma(t) \dS \dtau \dt 
 \\[1ex]
& \leq \int_0^T E\big(\v_0,\phi_0,\alpha \Psi_0\big)  \, \sigma(t) \dt,
\end{aligned}
\end{align}
which, in turn, implies that
\begin{align}
\label{EN:DISS:2}
\begin{aligned}
&E\big(\v(t),\phi(t),\alpha\Psi(t)\big)  
+ \int_0^t \intO \left( 2\nu(\phi)\, \abs{\D\v}^2 + m_\Om(\phi) |\Grad \mu|^2 \right)  \dx \dtau 
 \\
&\quad + \int_0^t \intG \left( \gamma(\alpha\Psi) \abs{\v}^2 + \nga(\Psi) \abs{\Gradg \Theta}^2\right) \dS \dtau
\leq  E\big(\v_0,\phi_0,\alpha \Psi_0\big) 
\end{aligned}
\end{align}
for almost all $t \in [0,T]$. 

It remains to show that the energy inequality \eqref{EN:DISS:2} actually holds true for all $t \in [0,T]$. First, we notice that the both integral terms in \eqref{EN:DISS:2} depend continuously on time. 
Recalling the growth assumptions on $G'$, we further derive the estimate
\begin{align*}
    \label{G:CONT}
    \begin{aligned}
    \left| \intG \big(G(\alpha\Psi(t))-G(\alpha\Psi(s) \big) \right|
    &= \intG \int_0^1 G'(\tau \alpha \Psi(t)+(1-\tau)\alpha\Psi(s)) \dtau\, 
    \alpha \left(\Psi(t)-\Psi(s)\right) \dS
     \\
    &\leq C \intG \left(1+ |\Psi(t)|^{q-1}+|\Psi(s)|^{q-1}\right) |\Psi(t)-\Psi(s)| \dS
     \\
    &\leq C \left( 1+ \norm{\Psi(t)}_{H^1(\Gamma)}^{q-1}+\norm{\Psi(s)}_{H^1(\Gamma)}^{q-1}
    \right) \norm{\Psi(t)-\Psi(s)}_{L^2(\Gamma)},
    \end{aligned}
\end{align*}
for all $s,t\in [0,T]$. This implies that the function $t\mapsto \intG G(\alpha\Psi(t))\,\mathrm dS$ is continuous on $[0,T]$.
Furthermore, recalling $\phi \in C([0,T]; L^2(\Omega))$ as well as \eqref{CONT:WEAK}, we deduce that the functions
\begin{align*}
    t \mapsto \norm{\v(t)}_{L^2(\Omega)}^2,
    \quad
    t \mapsto \norm{\nabla \phi(t)}_{L^2(\Omega)}^2,
    \quad
    t \mapsto \norm{\alpha \Gradg \Psi(t)}_{L^2(\Ga)}^2,
    \quad
    t \mapsto \intO F\big(\phi(t)\big) \dx
\end{align*}
are lower semicontinuous on $[0,T]$. Here, for the first three functions, we used the weak lower semicontinuity of the respective norms. For the fourth function, we applied Fatou's lemma.

Combining all these properties, it follows that the weak solution $(\v,\phi,\Psi,\mu,\Theta)$
satisfies the weak energy dissipation law \eqref{EN:DISS:2} for all times $t\in[0,T]$. Hence, reversing the change of variables \eqref{CofV}, we conclude that Definition~\ref{DEF:WS}\ref{DEF:WS:ENERGY} is fulfilled. 

We have thus shown that the quintuplet $(\v,\phi,\psi,\mu,\theta)$ is a weak solution of system \eqref{NSCH:MD} in the sense of Definition~\ref{DEF:WS}.

\noindent\textbf{Step 5: Higher regularity.}
We now assume that $\Omega$ is of class $C^2$ and, if $d=3$, we further assume $p<6$. 
Thanks to the regularity theory for second order eigenvalue problems in \cite[Proposition~4.1]{knopf-liu}, we deduce from the ansatz \eqref{DEF:PHIPSIK} that $(\phi_k,\Psi_k)\in L^2(0,T;\Da\cap \HH^2)$ for all $k\in \mathbb{N}$.
In view of \eqref{WFD:3}, the pair $\big(\phi_k(t),\Psi_k(t)\big)$ is a weak solution of the elliptic problem
\begin{subequations}
\label{EQ:ELPR}
\begin{alignat}{2}
    -\Lap \phi_k(t) &= f_k(t)
    &&\quad\text{in $\Omega$,}\\
    -\alpha^2 \Lapg \Psi_k(t) + \alpha \deln \phi_k(t) &= g_k(t)
    &&\quad\text{on $\Gamma$,}\\
    \phi_k(t)\vert_\Gamma &= \alpha \Psi_k(t)
    &&\quad\text{on $\Gamma$,}
\end{alignat}
\end{subequations}
in the sense of \cite[Definition~3.1]{knopf-liu}, where
\begin{align*}
    \big(f_k(t),g_k(t)\big) 
    := \big(\mu_k(t),\Theta_k(t)\big) 
    - \mathbb{P}_{\ZZK} \big( F'(\phi_k(t)) , \alpha G'(\alpha \Psi_k(t)) \big)
\end{align*}
for almost all $t\in [0,T]$. We now aim to prove uniform estimates that are independent of $k$ by exploiting \eqref{EST:AP:3D} and \eqref{EST:AP:2D}. For this reason, the symbol $C$ will stand for a generic constant independent of $k$.

We first observe that 
$$
\bignorm{\mathbb{P}_{\ZZK} \big( F'(\phi_k(t)) , \alpha G'(\alpha \Psi_k(t)) \big) }_{\LL^2}^2
\leq 
\bignorm{ \big( F'(\phi_k(t)) , \alpha G'(\alpha \Psi_k(t)) \big) }_{\LL^2}^2.
$$
Recalling \eqref{ass:pot} and using Sobolev's embedding theorem, we derive the estimate
\begin{align}
	\label{EST:REG:G'}
	\bignorm{\alpha G'\big(\alpha \Psi_k(t)\big)}_{L^2(\Gamma)}
	&\le C + C \norm{\Psi_k(t)}_{H^1(\Gamma)}^{q-1} \le C,
\end{align}
for almost all $t\in [0,T]$.
Let now $\epsilon>0$ be arbitrary. Without loss of generality, we assume $p\in (4,6)$.
Using the Gagliardo--Nirenberg inequality and Young's inequality, we observe that 
\begin{align}
\label{EST:F':L2} 
\begin{aligned}
   \bignorm{F'\big(\phi_k(t)\big)}_{L^2(\Omega)}
   &\leq C + C \norm{\phi_k(t)}_{L^{2(p-1)}(\Omega)}^{p-1}
   \\
   &\leq C + C \norm{\phi_k(t)}_{L^6(\Omega)}^{\frac{p+2}{2}} \norm{\phi_k(t)}_{H^2(\Omega)}^{\frac{p-4}{2}}
   \\
   &\leq C +   \epsilon \norm{\phi_k(t)}_{H^2(\Omega)}.
   \end{aligned}
\end{align}
In particular, this yields $\big(f_k(t),g_k(t)\big) \in \LL^2$ for almost all $t\in [0,T]$.
Introducing the functions
$$
    \psi_k := \alpha\Psi_k
    \quad\text{and}\quad
    \theta_k := \alpha^{-1} \Theta_k
    \quad\text{for any $k\in\N$,}
$$
we infer that for almost all $t\in[0,T]$, the pair $\big(\phi_k(t),\psi_k(t)\big) \in L^2(0,T;\DD_1\cap \HH^2)$ is a weak solution of the elliptic system
\begin{subequations}
\label{EQ:ELPR*}
\begin{alignat}{2}
    -\Lap \phi_k(t) &= f_k(t)
    &&\quad\text{in $\Omega$,}\\
    - \Lapg \psi_k(t) + \deln \phi_k(t) &= \alpha^{-1} g_k(t)
    &&\quad\text{on $\Gamma$,}\\
    \phi_k(t)\vert_\Gamma &= \psi_k(t)
    &&\quad\text{on $\Gamma$,}
\end{alignat}
\end{subequations}
with $\big(f_k(t),\alpha^{-1} g_k(t)\big) \in \LL^2$.
Hence, by regularity theory for elliptic problems with bulk-surface coupling (see \cite[Theorem~3.3]{knopf-liu}), we deduce that $\big(\phi_k(t),\psi_k(t)\big) \in \HH^2$ with
\begin{align*}
    &\bignorm{\big(\phi_k(t),\psi_k(t)\big)}_{\HH^2}^2 
    \le C \norm{\big(f_k(t),\alpha^{-1} g_k(t)\big)}_{\LL^2}^2
    \\
    &\quad \le C \Big( \norm{\mu_k(t)}_{L^2(\Omega)}^2 
        + \bignorm{F'\big(\phi_k(t)\big)}_{L^2(\Omega)}^2 
        + \norm{\theta_k(t)}_{L^2(\Gamma)}^2 
        + \bignorm{G'\big(\psi_k(t)\big)}_{L^2(\Gamma)}^2 \Big)
    \\
    &\quad \le C + C \norm{\mu_k(t)}_{L^2(\Omega)}^2 + C\norm{\theta_k(t)}_{L^2(\Gamma)}^2 + C\epsilon^2  \bignorm{\big(\phi_k(t),\psi_k(t)\big)}_{\HH^2}^2
\end{align*}
for almost all $t\in [0,T]$. 
Choosing $\epsilon$ sufficiently small, we can absorb the $\epsilon$-dependent term on the right-hand side by the left-hand side. Integrating the resulting estimate with respect to $t$ from $0$ to $T$, we derive that
\begin{align}
\label{REG:L2H2}
\int_0^T \norm{(\phi_k(t),\psi_k(t))}_{\HH^2}^2 \dt \leq C.
\end{align}
Recalling the convergence properties established in Step~4, we now pass to the limit $k\to \infty$. By means of the Banach--Alaoglu theorem, we infer from \eqref{REG:L2H2} that $\big(\phi,\psi\big) \in L^2(0,T;\HH^2)$. We further obtain
\begin{align*}
    \big(f_k,\alpha^{-1} g_k\big)
    \to \big( \mu - F'(\phi) , \theta - G'(\psi) \big)
    \quad\text{weakly in $L^{6/5}(Q) \times L^2(\Sigma)$}
\end{align*}
as $k\to\infty$.
Hence, by passing to the limit in \eqref{EQ:ELPR*}, we eventually conclude that \eqref{EQ:MUTH:STRG} is fulfilled in the strong sense.

We now suppose that $\Omega$ is even of class $C^3$ and, if $d=3$, we further assume $p<6$. 
In the following, we merely proceed formally by directly considering the solution $(\phi,\psi)$ of \eqref{EQ:MUTH:STRG}. Nevertheless, the reasoning below can be rigorously justified by 
a suitable approximation argument.
% working with the approximate solutions of the Faedo--Galerkin scheme as it was done above.

Let $\epsilon>0$ be arbitrary and without loss of generality, we merely consider the case 
$p \in [5,6)$.
Applying the Gagliardo--Nirenberg inequality and the Agmon inequality, we find
\begin{align}
    \label{EST:F'':L2}
   \begin{aligned}   
   \bignorm{F''\big(\phi(t)\big) \nabla \phi(t)}_{L^2(\Omega)}
   &\leq C \norm{\nabla \phi(t)}_{L^2(\Omega)} 
   + C \norm{|\phi(t)|^{p-2} \nabla \phi(t)}_{L^2(\Omega)}
   \\
   &\leq C + C \norm{\phi(t)}_{L^{2(p-2)}(\Omega)}^{p-2} \norm{\nabla \phi(t)}_{L^\infty(\Omega)}
   \\
   &\leq C + C \norm{\phi(t)}_{L^6(\Omega)}^{\frac{p+1}{2}} \norm{\phi(t)}_{H^2(\Omega)}^{\frac{p-5}{2}+\frac12} 
   \norm{\phi(t)}_{H^3(\Omega)}^\frac12
   \\
   &\leq C 
   + C \norm{\phi(t)}_{H^2(\Omega)}^\frac{p-4}{2} 
   \norm{\phi(t)}_{H^3(\Omega)}^\frac12
   \\
   &\leq C
   + C \norm{\phi(t)}_{H^1(\Omega)}^\frac{p-4}{4}
   \norm{\phi(t)}_{H^3(\Omega)}^{\frac{p-4}{4}+\frac12}
   \\
   &\leq C + 
   C \norm{\phi(t)}_{H^3(\Omega)}^\frac{p-2}{4}
   \\
   &\leq C 
   + \varepsilon \norm{\phi(t)}_{H^3(\Omega)}.
   \end{aligned}
\end{align}
for almost all $t\in [0,T]$. Using Hölder's inequality and the estimates \eqref{EST:F':L2} and \eqref{EST:F'':L2}, we have
\begin{align}
\label{EST:REG:F':H1}
\begin{aligned}
	\bignorm{F'\big(\phi(t)\big)}_{H^1(\Omega)}^2
	&= \bignorm{F'\big(\phi(t)\big)}_{L^2(\Omega)}^2 
		+ \bignorm{F''\big(\phi(t)\big) \Grad \phi(t)}_{L^2(\Omega)}^2
	\\
	& \le C + C\epsilon \norm{\phi(t)}_{H^3(\Omega)}^2 
	\end{aligned}
\end{align}
for almost all $t\in [0,T]$.
Proceeding similarly, we derive the estimate
\begin{align}
\label{EST:G'':L2}
	\bignorm{G''\big(\phi(t)\big) \nabla \phi(t)}_{L^2(\Omega)}
	\leq C 
   + \epsilon \norm{\psi(t)}_{H^3(\Gamma)},
\end{align}
which leads to 
\begin{align}
\label{EST:REG:G':H1}
	\bignorm{G'\big(\phi(t)\big)}_{H^1(\Omega)}^2
	\le C + C\epsilon \norm{\psi(t)}_{H^3(\Gamma)}^2 
\end{align}
for almost all $t\in [0,T]$.
We thus obtain that $\big(f(t),g(t)\big) \in \HH^1$ 
and by regularity theory for elliptic problems with bulk-surface coupling (see \cite[Theorem~3.3]{knopf-liu}), we infer that $\big(\phi(t),\psi(t)\big) \in \HH^3$ with
\begin{align*}
    &\bignorm{\big(\phi(t),\psi(t)\big)}_{\HH^3}^2 
    \le C \bignorm{\big(f(t),g(t)\big)}_{\HH^1}^2
    \\
    &\quad \le C \Big( \norm{\mu(t)}_{H^1(\Omega)}^2 
        + \bignorm{F'\big(\phi(t)\big)}_{H^1(\Omega)}^2 
        + \norm{\theta(t)}_{H^1(\Gamma)}^2 
        + \bignorm{G'\big(\psi(t)\big)}_{H^1(\Gamma)}^2 \Big)
    \\
    &\quad \le C +C\norm{\mu(t)}_{H^1(\Omega)}^2
    + C \norm{\theta(t)}_{H^1(\Gamma)}^2 + C\epsilon^2 \bignorm{\big(\phi(t),\psi(t)\big)}_{\HH^3}^2
\end{align*}
for almost all $t\in [0,T]$. 
Choosing $\epsilon$ sufficiently small, we can absorb the $\epsilon$-dependent term on the right-hand side by the left-hand side.
Then, integrating the resulting estimate with respect to $t$ from $0$ to $T$, we eventually conclude that $(\phi,\psi) \in L^2(0,T;\HH^3)$.

This means that all assertions are established and thus, the proof is complete. 
\end{proof}

\begin{proof}[Proof of Theorem~\ref{THM:UNI}]
We recall that the existence of a weak solution already follows from Theorem~\ref{THM:WS}. As the functions $\nu$, $\mom$ and $\mga$ are assumed to be constant, we assume, without loss of generality, that $\nu\equiv 1$, $\mom\equiv 1$ and $\mga\equiv 1$. In the following, we write $C$ to denote generic positive constants depending only on $\Omega$, the parameters of the system \eqref{NSCH:MD} and the norms of the initial data.

To establish the continuous dependence estimate \eqref{cont-dep*}, we now suppose that $(\v_1,\phi_1,\psi_1,\mu_1,\theta_1)$ and $(\v_2,\phi_2,\psi_2,\mu_2,\theta_2)$ are two weak solutions corresponding to the 
initial data $(\v_{0}^1, (\phi_0^1, \psi_0^1))$, $(\v_{0}^2, (\phi_0^2, \psi_0^2)) \in \bL^2_\Div(\Omega)\times\DD_1$, respectively. Once again, we employ the change of variables \eqref{CofV} introduced in the proof of Theorem \ref{THM:WS}. Therefore, for $\alpha = \sqrt{\beta}$ and $i=1,2$, we define
\begin{alignat}{2}
\label{CofV*}
\left\{\;
\begin{aligned}
    \Psi_i &:= \alpha^{-1} \psi_i,
    &&\quad\text{i.e.,}\quad
    \psi_i = \alpha\Psi_i,
    \\
    \Psi_0^i &:= \alpha^{-1} \psi_0^i,
    &&\quad\text{i.e.,}\quad
    \psi_0^i = \alpha\Psi_0^i,
    \\
    \Theta_i &:= \alpha \theta_i
    &&\quad\text{i.e.,}\quad
    \theta_i = \alpha^{-1} \Theta_i,
    \\
    \nga(\Psi_i) &:= \alpha^{-2} \mga(\psi) \equiv \alpha^{-2}.
    \end{aligned}
\right.
\end{alignat}
We further recall that the corresponding weak solutions $(\v_i,\phi_i,\Psi_i,\mu_i,\Theta_i)$ exhibit the regularity \eqref{WS:REG:alpha} for $i=1,2$. Since $d=2$ and $\Omega$ is of class $C^3$, we further have 
$(\phi_i,\Psi_i) \in L^2(0,T;\HH^3)$ for $i=1,2$.
Now, we set
$$
(\v,\phi,\Psi,\mu,\Theta)= (\v_1-\v_2,\phi_1-\phi_2,\Psi_1-\Psi_2,\mu_1-\mu_2,\Theta_1-\Theta_2).
$$
In view of their construction in the proof of Theorem~\ref{THM:WS}, the functions $(\v_i,\phi_i,\Psi_i,\mu_i,\Theta_i)$ satisfy the weak formulation \eqref{WF:alpha} for $i=1,2$. We thus obtain
\begin{subequations}
\label{WF-U}
\begin{align}
\label{WF:NS}
&\begin{aligned}
&\ang{\delt \v}{\w}_{\bH_\Div^1(\Omega)} 
- \intO (\v_1 \otimes \v) : \Grad\w \dx
 -\intO (\v \otimes \v_2) : \Grad\w \dx 
 + \intO \Grad\v : \Grad\w \dx
\\
&\quad
= \intO \mu \Grad\phi_1 \cdot \w \dx 
+\intO \mu_2 \Grad\phi \cdot \w \dx
- \intG \gamma(\alpha \Psi_1) \v\cdot \w \dS
\\
&\qquad 
- \intG \left(\gamma(\alpha\Psi_1)-\gamma(\alpha \Psi_2)\right) \v_2\cdot \w \dS
-\intG \Psi_1\Gradg \Theta \cdot \w \dS
- \intG \Psi\Gradg\Theta_2 \cdot \w \dS,
\end{aligned}
\\[1ex]
\label{WF:CH}
&\begin{aligned}
    &\bigang{(\delt \phi,\delt \Psi)}{(\zeta,\xi)}_{\DD_\alpha}
        - \intO \phi_1\v \cdot \Grad\zeta \dx
        - \intO \phi\v_2 \cdot \Grad\zeta \dx
        - \intG \Psi_1\v \cdot \Gradg\xi \dS
        \\     
        &\quad - \intG \Psi\v_2 \cdot \Gradg\xi \dS
        = 
        - \intO  \Grad \mu \cdot \Grad \zeta \dx
        	-\intG \frac{1}{\alpha^2} \Gradg \Theta \cdot \Gradg \xi \dS
        	\end{aligned}
        \end{align}
        \end{subequations}
        for all test functions $\w\in \bH^1_\Div(\Omega)$, $(\zeta,\xi)\in\DD_\alpha$, where
 \begin{subequations}
    \label{EQ:UNIQ:STRG}
    \begin{alignat}{2}
    \label{EQ:UNIQ:STRG:A}
        \mu &= -\Lap \phi + F'(\phi_1)-F'(\phi_2) &&\quad\text{a.e.~in $Q$},\\
    \label{EQ:UNIQ:STRG:B}
        \Theta &= - \alpha^2 \Lapg \Psi + \alpha G'(\alpha \Psi_1)-\alpha G'(\alpha \Psi_2) 
        + \alpha \deln\phi &&\quad\text{a.e.~on $\Sigma$}.
    \end{alignat}
    \end{subequations}
Taking $\w=\v$ in \eqref{WF:NS}, the classical chain rule formula that yields
\begin{align}
\label{Test:NS}
\begin{aligned}
&\frac12 \ddt \norm{\v}_{L^2(\Omega)}^2 
+ \norm{\Grad \v}_{L^2(\Omega)}^2 
+\gamma_0 \norm{\v}_{L^2(\Ga)}^2
\\
&\quad \leq
 \intO (\v_1 \otimes \v) : \Grad\v \dx 
 +\intO \mu \Grad\phi_1 \cdot \v \dx 
 +\intO \mu_2 \Grad\phi \cdot \v \dx
\\
&\qquad 
 - \intG \left(\gamma(\alpha \Psi_1)-\gamma(\alpha \Psi_2)\right) \v_2\cdot \v \dS
 - \intG \Psi_1\Gradg\Theta \cdot \v \dS
 - \intG \Psi\Gradg\Theta_2 \cdot \v \dS.
 \end{aligned}
\end{align}
Here, we have used the assumption \eqref{ass:vis-fric} for the friction term $\gamma$
and the relation
$$
\intO (\v \otimes \v_2) : \Grad\v \dx
= - \frac12 \intO (\Div \, \v_2) |\v|^2 \dx 
=0.
$$
Choosing $(\zeta,\xi)=(\phi,\Psi) \in \DD_\alpha$ in \eqref{WF:CH}, 
we find
\begin{align}
\label{Test:CH:L2}
\begin{aligned}
&\ddt \left[ \frac{1}{2} \norm{\phi}_{L^2(\Omega)}^2 + \frac{1}{2} \norm{\Psi}_{L^2(\Ga)}^2 \right]
+ \intO \Grad \mu \cdot \Grad \phi \dx
+ \frac{1}{\alpha^2}\intG \Grad_\Ga \Theta \cdot \Grad_\Ga \Psi \dS 
\\
&\quad = 
 \intO \phi_1\v \cdot \Grad\phi \dx
 + \intO \phi\v_2 \cdot \Grad\phi \dx
  + \intG \Psi_1\v \cdot \Gradg\Psi \dS
  + \intG \Psi\v_2 \cdot \Gradg\Psi \dS.
  \end{aligned}
\end{align}
We observe that 
\begin{align*}
 \intO \Grad \mu \cdot \Grad \phi \dx
&= - \intO \mu \Delta \phi \dx 
+ \intG \mu \partial_\n \phi \dS 
\notag\\
&= \norm{\Delta \phi}_{L^2(\Omega)}^2 - \intO \left( F'(\phi_1)-F'(\phi_2)\right) \Delta \phi \dx 
+ \intG \alpha \Theta \partial_\n \phi \dS
\notag\\
&
\begin{aligned}
&= \norm{\Delta \phi}_{L^2(\Omega)}^2 - \intO \left( F'(\phi_1)-F'(\phi_2)\right) \Delta \phi \dx 
+ \norm{\Theta}_{L^2(\Ga)}^2 
\\
&\quad + \intG \alpha^2 \Theta \Delta_\Ga \Psi \dS
- \intG \Theta \left( \alpha G'(\alpha\Psi_1)-\alpha G'(\alpha \Psi_2) \right) \dS
\end{aligned}
\\
&= \norm{\Delta \phi}_{L^2(\Omega)}^2 - \intO \left( F'(\phi_1)-F'(\phi_2)\right) \Delta \phi \dx 
+ \norm{\Theta}_{L^2(\Ga)}^2 
\notag\\
&\quad - \alpha^2 \intG \Grad_\Ga \Theta \cdot \Grad_\Ga \Psi \dS
- \intG \Theta \left(\alpha G'(\alpha\Psi_1)-\alpha G'(\alpha\Psi_2) \right) \dS.
\notag
\end{align*}
Since $\Div\, \v_2=0$ in $\Omega$, we further deduce that
\begin{align*}
\intO \phi\v_2 \cdot \Grad\phi \dx  
= \intO \v_2 \cdot \Grad \left( \frac12 \phi^2 \right)\dx 
= 0
\end{align*}
via integration by parts.
Hence, the differential equation \eqref{Test:CH:L2} can be reformulated as 
\begin{align}
\label{Test:CH:L2-2}
\begin{aligned}
&\ddt \left[ \frac{1}{2} \norm{\phi}_{L^2(\Omega)}^2
    + \frac{1}{2} \norm{\Psi}_{L^2(\Ga)}^2 \right]
+ \norm{\Delta \phi}_{L^2(\Omega)}^2  
+ \norm{\Theta}_{L^2(\Ga)}^2 
\\
&\quad = 
 \intO \phi_1\v \cdot \Grad\phi \dx
  + \intG \Psi_1\v \cdot \Gradg \Psi \dS
  + \intG \Psi \v_2 \cdot \Gradg \Psi \dS
\\
 &\qquad +
 \left( \alpha^2-\frac{1}{\alpha^2}\right) \intG \Gradg \Theta \cdot \Gradg \Psi \dS
 + \intO \left( F'(\phi_1)-F'(\phi_2)\right) \Delta \phi \dx 
 \\
 &\qquad 
 + \intG \Theta \left( \alpha G'(\alpha \Psi_1)-\alpha G'(\alpha \Psi_2) \right) \dS.
 \end{aligned}
\end{align}
Next, choosing $(\zeta,\xi)=(\mu,\Theta) \in\DD_\alpha$ in \eqref{WF:CH}, we find
\begin{align}
\label{Test:CH:H1}
\begin{aligned}
 &\bigang{(\delt \phi,\delt \Psi)}{(\mu,\Theta)}_{\DD_\alpha}
 + \norm{\Grad \mu}_{L^2(\Omega)}^2
 +\frac{1}{\alpha^2}\norm{\Grad_\Ga \Theta}_{L^2(\Gamma)}^2
\\
 &\quad =
  \intO \phi_1\v \cdot \Grad\mu \dx
  + \intO \phi\v_2 \cdot \Grad\mu \dx
  + \intG \Psi_1\v \cdot \Gradg\Theta \dS
  + \intG \Psi\v_2 \cdot \Gradg\Theta \dS.
  \end{aligned}
\end{align}
Recalling that $(\phi_i,\psi_i) \in L^2(0,T; \HH^3)$, we infer that $F'(\phi_i) \in H^1(\Omega)$ and $G'(\alpha \Psi_i) \in H^1(\Gamma)$ a.e.~in $[0,T]$ for $i\in\{1,2\}$. We further conclude from \eqref{REL:POT} that
\begin{align*}
    F'(\phi_1) - F'(\phi_2) = F'(\alpha \Psi_1) - F'(\alpha \Psi_2) 
    = \alpha^2 \big( G'(\alpha \Psi_1) - G'(\alpha \Psi_2) \big)
    \quad\text{a.e.~on $\Sigma$},
\end{align*}
which directly entails
$\big( F'(\phi_1) - F'(\phi_2) , \alpha G'(\alpha \Psi_1) - \alpha G'(\alpha \Psi_2) \big) \in \DD_\alpha$, for almost every $t \in [0,T]$.
In view of \eqref{EQ:UNIQ:STRG}, we thus have
\begin{align*}
    ( -\Lap\phi , -\alpha^2 \Lapg\Psi + \alpha \deln\phi ) \in L^2(0,T; \DD_\alpha).
\end{align*}
Hence, exploiting Proposition \ref{chainrule} with $\kappa=\alpha^2$ and $\sigma=\alpha$,  the first term in \eqref{Test:CH:H1} can be expressed as
\begin{align}
\begin{aligned}
\bigang{(\delt \phi,\delt \Psi)}{(\mu,\Theta)}_{\DD_\alpha}
&= \bigang{(\delt \phi,\delt \Psi)}{(-\Delta \phi,-\alpha^2\Delta_\Ga \Psi+ \alpha \partial_\n \phi)}_{\DD_\alpha} 
\\[1ex]
&\quad 
+ \bigang{(\delt \phi,\delt \Psi)}{\big(F'(\phi_1)-F'(\phi_2), \alpha G'(\alpha \Psi_1)-\alpha G'(\alpha \Psi_2)\big)}_{\DD_\alpha}
\\
&= \ddt \left[ \frac12 \| \Grad \phi\|_{L^2(\Omega)}^2+ \frac{\alpha^2}{2} \| \Grad_\Ga \Psi\|_{L^2(\Ga)}^2 \right]
\\[1ex]
&\quad +
\bigang{(\delt \phi,\delt \Psi)}{\big(F'(\phi_1)-F'(\phi_2),\alpha G'(\alpha \Psi_1)- \alpha G'(\alpha \Psi_2)\big)}_{\DD_\alpha}.
\end{aligned}
\end{align}
Using this identity, \eqref{Test:CH:H1} can be rewritten as
\begin{align}
\label{Test:CH:H1-2}
\begin{aligned}
 &\ddt \left[ \frac12 \| \Grad \phi\|_{L^2(\Omega)}^2+ \frac{\alpha^2}{2} \| \Grad_\Ga \Psi\|_{L^2(\Ga)}^2 \right]
 + \norm{\Grad \mu}_{L^2(\Omega)}^2
 +\frac{1}{\alpha^2}\norm{\Grad_\Ga \Theta}_{L^2(\Gamma)}^2
\\
 &\quad =
  \intO \phi_1\v \cdot \Grad\mu \dx
  + \intO \phi\v_2 \cdot \Grad\mu \dx
  + \intG \Psi_1\v \cdot \Gradg\Theta \dS
  + \intG \Psi\v_2 \cdot \Gradg\Theta \dS
\\[1ex]
& \qquad -\bigang{(\delt \phi,\delt \Psi)}{\big(F'(\phi_1)-F'(\phi_2),\alpha G'(\alpha \Psi_1)-\alpha G'(\alpha \Psi_2)\big)}_{\DD_\alpha}.
\end{aligned}
\end{align}
Summing \eqref{Test:CH:L2-2} and \eqref{Test:CH:H1-2}, we find
\begin{align}
\label{Test:CH}
\begin{aligned}
 &\ddt \left[ \frac12 \| \phi\|_{H^1(\Omega)}^2 + \frac{1}{2} \| \Psi\|_{L^2(\Ga)}^2 
 + \frac{\alpha^2}{2} 
 \| \Gradg\Psi\|_{L^2(\Ga)}^2 \right]
 \\
 &\qquad + \norm{\Grad \mu}_{L^2(\Omega)}^2
 +\frac{1}{\alpha^2}
 \norm{\Gradg \Theta}_{L^2(\Gamma)}^2
 +\norm{\Delta \phi}_{L^2(\Omega)}^2  
 +\norm{\Theta}_{L^2(\Ga)}^2 
\\
&\quad = 
 \intO \phi_1\v \cdot \Grad\phi \dx
  +  \intG \Psi_1\v \cdot \Gradg\Psi \dS
  + \intG \Psi \v_2 \cdot \Gradg\Psi \dS
 \\
 &\qquad +
 \left( \alpha^2-\frac{1}{\alpha^2}\right) \intG \Gradg \Theta \cdot \Gradg \Psi \dS
 + \intO \left( F'(\phi_1)-F'(\phi_2)\right) \Delta \phi \dx 
 \\
 &\qquad 
 + \intG \Theta \left( \alpha G'(\alpha \Psi_1)-\alpha G'(\alpha \Psi_2) \right) \dS
  +\intO \phi_1\v \cdot \Grad\mu \dx
  \\
 &\qquad 
 + \intO \phi\v_2 \cdot \Grad\mu \dx
  + \intG \Psi_1\v \cdot \Gradg\Theta \dS
  + \intG \Psi\v_2 \cdot \Gradg\Theta \dS
 \\[1ex]
& \qquad-\bigang{(\delt \phi,\delt \Psi)}{\big(F'(\phi_1)-F'(\phi_2), \alpha G'(\alpha \Psi_1)- \alpha G'(\alpha \Psi_2)\big)}_{\DD_\alpha}.
\end{aligned}
\end{align}
Observing that
$$
\intO \mu \Grad\phi_1 \cdot \v \dx +
 \intO \phi_1\v \cdot \Grad\mu \dx=0,
$$
which follows via integration by parts since $\Div \, \v=0$ in $\Omega$,
and collecting \eqref{Test:NS} and \eqref{Test:CH} together, we end up with
\begin{align}
\label{Test}
\begin{aligned}
 &\ddt \left[ \frac12 \norm{ \v}_{L^2(\Omega)}^2 + 
 \frac12 \| \phi\|_{H^1(\Omega)}^2 +\frac12 \| \Psi\|_{L^2(\Ga)}^2 
 + \frac{\alpha^2}{2} \| \Gradg\Psi\|_{L^2(\Ga)}^2 \right]
\\
&\quad 
+ \norm{\Grad \v}_{L^2(\Omega)}^2 
+\gamma_0 \norm{\v}_{L^2(\Ga)}^2
 + \norm{\Grad \mu}_{L^2(\Omega)}^2
 +\frac{1}{\alpha^2}
 \norm{\Grad_\Ga \Theta}_{L^2(\Gamma)}^2
 +\norm{\Delta \phi}_{L^2(\Omega)}^2  
 +\norm{\Theta}_{L^2(\Ga)}^2 
\\
&\quad 
\leq \intO (\v_1 \otimes \v) : \Grad\v \dx
+\intO \mu_2 \Grad\phi \cdot \v \dx
- \intG \left(\gamma(\alpha \Psi_1)-\gamma(\alpha \Psi_2)\right) \v_2\cdot \v \dS
\\
&\qquad 
- \intG \Psi\Gradg\Theta_2 \cdot \v \dS
+ \intO \phi_1\v \cdot \Grad\phi \dx
  +  \intG \Psi_1\v \cdot \Gradg\Psi \dS
  + \intG \Psi\v_2 \cdot \Gradg\Psi \dS
 \\
 &\qquad
 + \intO \phi\v_2 \cdot \Grad\mu \dx
 + \intG \Psi\v_2 \cdot \Gradg\Theta \dS
 +\left( \alpha^2-\frac{1}{\alpha^2}\right) \intG \Gradg \Theta \cdot \Gradg \Psi \dS
\\
& \qquad
 + \intO \left( F'(\phi_1)-F'(\phi_2)\right) \Delta \phi \dx
 + \intG \Theta \left( \alpha G'(\alpha \Psi_1)
 -G'(\alpha \Psi_2) \right) \dS 
\\[1ex]
& \qquad
-\bigang{(\delt \phi,\delt \Psi)}{\big(F'(\phi_1)-F'(\phi_2), \alpha G'(\alpha \Psi_1)-\alpha G'(\alpha \Psi_2)\big)}_{\DD_\alpha}.
\end{aligned}
\end{align}

In order to estimate the right-hand side in \eqref{Test}, we recall that
\begin{align}
\label{WS:bound}
\norm{\v_i}_{L^\infty(0,T;L^2(\Omega))}\leq C, \quad 
\norm{(\phi_i,\Psi_i)}_{L^\infty(0,T;\mathcal{H}^1)}\leq C
\end{align}
for $i\in\{1,2\}$. By the Ladyzhenskaya inequality and the Poincaré inequality \eqref{bs-Poincare:2}, we have 
\begin{align}
\label{T1}
\begin{aligned}
\left| \intO (\v_1 \otimes \v) : \Grad\v \dx
\right|  
&\leq \norm{\v_1}_{L^4(\Omega)} \norm{\v}_{L^4(\Omega)} \norm{\Grad \v}_{L^2(\Omega)}
\\
&\leq C \norm{\v_1}_{L^4(\Omega)}
\norm{\v}_{L^2(\Omega)}^\frac12
\norm{\v}_{H^1(\Omega)}^\frac12  \norm{\Grad \v}_{L^2(\Omega)} 
\\
&\leq C 
\norm{\v_1}_{L^4(\Omega)} 
\norm{\v}_{L^2(\Omega)}^\frac12
 \left(  \norm{\Grad \v}_{L^2(\Omega)} 
 + \gamma_0 \norm{\v}_{L^2(\Gamma)}
 \right)^\frac32
\\
 &\leq \frac18 \norm{\Grad \v}_{L^2(\Omega)}^2
 + \frac{\gamma_0}{12} \norm{\v}_{L^2(\Gamma)}^2 
 + C \norm{\v_1}_{L^4(\Omega)}^4 \norm{\v}_{L^2(\Omega)}^2.
 \end{aligned}
\end{align}
Moreover, using Sobolev's embedding theorem as well as Young's inequality, we obtain
\begin{align}
\label{T2}
\begin{aligned}
\left| 
\intO \mu_2 \Grad\phi \cdot \v \dx \right|
&\leq \norm{\mu_2}_{L^6(\Omega)} 
\norm{\Grad \phi}_{L^3(\Omega)} 
\norm{\v}_{L^2(\Omega)}
\\
&\leq \frac{\varpi}{10} \norm{\phi}_{H^2(\Omega)}^2
+ C \norm{\mu_2}_{H^1(\Omega)}^2 
\norm{\v}_{L^2(\Omega)}^2,
\end{aligned}
\end{align}
where $\varpi$ is a small constant which will be chosen later on.
Since $d=2$, we have the continuous embeddings $H^1(\Gamma)\hookrightarrow L^\infty(\Gamma)$ and $H^{1/2}(\Gamma)\emb L^p(\Gamma)$ for every $p\in[1,\infty)$. Due to the trace theorem, it holds $H^1(\Omega)\emb H^{1/2}(\Gamma)$ and consequently $H^1(\Omega) \emb L^p(\Gamma)$ for every $p\in[1,\infty)$.
Using \eqref{WS:bound} as well as the assumption that $\gamma'$ is locally bounded, we thus get
\begin{align}
\label{T3}
\begin{aligned}
\left| - \intG \left(\gamma(\alpha \Psi_1)-\gamma(\alpha \Psi_2)\right) \v_2\cdot \v \dS
\right|    
&\leq C \norm{\Psi}_{L^6(\Gamma)} \norm{\v_2}_{L^3(\Gamma)} \norm{\v}_{L^2(\Gamma)}
\\
&\leq \frac{\gamma_0}{12} \norm{\v}_{L^2(\Gamma)}^2 
+C \norm{\v_2}_{H^1(\Omega)}^2
\norm{\Psi}_{H^1(\Gamma)}^2.
\end{aligned}
\end{align}
Exploiting the bulk-surface Poincar\'e inequality \eqref{bs-Poincare:2},
the trace theorem and Sobolev's embedding theorem, we obtain
\begin{align}
\label{T4}
\begin{aligned}
\left| - \intG \Psi\Gradg\Theta_2 \cdot \v \dS \right|    
& \leq \norm{\Psi}_{L^6(\Ga)} 
\norm{\Theta_2}_{H^1(\Ga)} 
\norm{\v}_{L^3(\Ga)}
\\
&\leq C \norm{\Psi}_{H^1(\Ga)} \norm{\Theta_2}_{H^1(\Ga)} 
\norm{\v}_{H^1(\Omega)}
\\
&\leq \frac18 \norm{\Grad \v}_{L^2(\Omega)}^2 
 + \frac{\gamma_0}{12} \norm{\v}_{L^2(\Gamma)}^2 
 + C \norm{\Theta_2}_{H^1(\Ga)}^2 \norm{\Psi}_{H^1(\Ga)}^2.
 \end{aligned}
\end{align}
Proceeding similarly, we derive the estimates
\begin{align}
\label{T5}
&\begin{aligned}
\left| \intO \phi_1\v \cdot \Grad\phi \dx\right|   
&\leq \norm{\phi_1}_{L^6(\Omega)} 
\norm{\v}_{L^3(\Omega)}
\norm{\Grad \phi}_{L^2(\Omega)}
\\
&\leq 
\frac18 \norm{\Grad \v}_{L^2(\Omega)}^2 
 + \frac{\gamma_0}{12} \norm{\v}_{L^2(\Gamma)}^2 
 + C \norm{\phi}_{H^1(\Omega)}^2,
 \end{aligned}
\\[1ex]
\label{T6}
&\begin{aligned}
\left|  \intG \Psi_1\v \cdot \Gradg\Psi \dS\right|
&\leq 
\norm{\Psi_1}_{L^6(\Ga)}
\norm{\v}_{L^3(\Ga)} 
\norm{\Grad_\Ga \Psi}_{L^2(\Ga)}
\\
&\leq 
C\norm{\v}_{H^1(\Omega)} 
\norm{\Psi}_{H^1(\Ga)}
\\
&\leq \frac18 \norm{\Grad \v}_{L^2(\Omega)}^2 
 + \frac{\gamma_0}{12} \norm{\v}_{L^2(\Gamma)}^2 
 + C \norm{\Psi}_{H^1(\Ga)}^2,
 \end{aligned}
\\[1ex]
\label{T7}
&\begin{aligned}
\left|  \intG \Psi \v_2 \cdot \Gradg \Psi \dS \right|    
&
\leq \norm{\Psi}_{L^6(\Ga)} \norm{\v_2}_{L^3(\Ga)}
\norm{\Grad_\Ga \Psi}_{L^2(\Ga)}
\\
&\leq 
C \norm{\v_2}_{H^1(\Omega)} \norm{\Psi}_{H^1(\Ga)}^2,
\end{aligned}
\\[1ex]
\label{T8}
&\begin{aligned}
\left| \intO \phi\v_2 \cdot \Grad\mu \dx\right|    
&\leq \norm{\phi}_{L^6(\Omega)} \norm{\v_2}_{L^3(\Omega)} \norm{\Grad \mu}_{L^2(\Omega)}
\\
&\leq 
\frac14 \norm{\Grad \mu}_{L^2(\Omega)}^2
+ C \norm{\v_2}_{H^1(\Omega)}^2 \norm{\phi}_{H^1(\Omega)}^2,
\end{aligned}
\\[1ex]
\label{T9}
&\begin{aligned}
\left|  \intG \Psi \v_2 \cdot \Gradg \Theta \dS \right|
&\leq \norm{\Psi}_{L^6(\Ga)} \norm{\v_2}_{L^3(\Ga)} \norm{\Grad_\Ga \Theta}_{L^2(\Ga)}
\\
&\leq \frac{1}{8\alpha^2} \norm{\Grad_\Ga \Theta}_{L^2(\Ga)}^2 
+C \norm{\v_2}_{H^1(\Omega)}^2 \norm{\Psi}_{H^1(\Ga)}^2
\end{aligned}
\end{align}
and
\begin{align}
\begin{split}
\left| \left( \alpha^2-\frac{1}{\alpha^2}\right) \intG \Gradg \Theta \cdot \Gradg \Psi \dS\right|
    &\leq 
\frac{1}{8\alpha^2} \norm{\Grad_\Ga \Theta}_{L^2(\Ga)}^2 
+C \norm{\Psi}_{H^1(\Ga)}^2.
\end{split}
\end{align}
Recalling \eqref{ass:pot} as well as \eqref{WS:bound}, we deduce that
\begin{equation}
\label{T10}
\begin{split}
&\left| \intO \left( F'(\phi_1)-F'(\phi_2)\right) \Delta \phi \dx \right|  
\\
&\quad \leq C \intO \left( 1+|\phi_1|^{p-2}+ |\phi_2|^{p-2}\right) |\phi| |\Delta \phi| \dx
\\
& \quad \leq C \left( 1+ \norm{\phi_1}_{L^{3(p-2)}(\Omega)}^{p-2}+
\norm{\phi_2}_{L^{3(p-2)}(\Omega)}^{p-2}
\right) \norm{\phi}_{L^6(\Omega)} \norm{\Delta \phi}_{L^2(\Omega)}
\\
& \quad \leq C \norm{\phi}_{H^1(\Omega)} 
\norm{\Delta \phi}_{L^2(\Omega)}
\\
&\quad \leq \frac12 \norm{\Delta \phi}_{L^2(\Omega)}^2 
+ C \norm{\phi}_{H^1(\Omega)}^2.
\end{split}
\end{equation}
Recalling that $H^1(\Gamma)\hookrightarrow L^\infty(\Gamma)$ (since $d=2$), we also infer that
\begin{equation}
\label{T11}
\begin{split}
\left| \intG \Theta \left( \alpha G'(\alpha \Psi_1)- \alpha G'(\alpha \Psi_2) \right) \dS \right|
&\leq C \norm{\Theta}_{L^2(\Ga)} \norm{\Psi}_{L^2(\Ga)}
\\
&\leq \frac{1}{2} \norm{\Theta}_{L^2(\Ga)}^2
+ C\norm{\Psi}_{H^1(\Ga)}^2.
\end{split}
\end{equation}
Regarding the final term, it follows by duality that
\begin{align}
\label{T12}
\begin{aligned}
&\left| -\bigang{(\delt \phi,\delt \Psi)}{\big(F'(\phi_1)-F'(\phi_2),\alpha G'(\alpha \Psi_1)-\alpha G'(\alpha \Psi_2)\big)}_{\DD_\alpha}
\right|
\\
&\quad \leq \norm{(\delt \phi,\delt \Psi)}_{\mathcal{D}_\alpha '} 
\bignorm{\big(F'(\phi_1)-F'(\phi_2), \alpha G'(\alpha \Psi_1)-\alpha G'(\alpha \Psi_2)\big)}_{\mathcal{D}_\alpha}
\\
&\quad 
\leq C \norm{(\delt \phi,\delt \Psi)}_{\mathcal{D}_\alpha '} 
\left( \bignorm{F'(\phi_1)-F'(\phi_2)}_{H^1(\Omega)}+ \bignorm{G'(\alpha \Psi_1)-G'(\alpha \Psi_2)}_{H^1(\Ga)}\right).
\end{aligned}
\end{align}
By comparison in \eqref{WF:CH}, we observe that
\begin{align}
\label{T12-11}
\begin{aligned}
\norm{(\delt \phi,\delt \Psi)}_{\mathcal{D}_\alpha '} 
&
\leq
C \left( \norm{\phi_1}_{L^\infty(\Omega)} \norm{\v}_{L^2(\Omega)} + \norm{\phi}_{L^6(\Omega)} \norm{\v_2}_{L^3(\Omega)}
+\norm{\Psi_1}_{L^\infty(\Gamma)} \norm{\v}_{L^2(\Gamma)} \right.
\\
&\quad \left. + \norm{\Psi}_{L^6(\Gamma)}\norm{\v_2}_{L^3(\Gamma)} + \norm{\nabla \mu}_{L^2(\Omega)}
+\norm{\Grad_\Ga \Theta}_{L^2(\Gamma)}
\right)
\\
&
\leq
C \left( \norm{\phi_1}_{L^\infty(\Omega)} \norm{\v}_{L^2(\Omega)} + \norm{\phi}_{L^6(\Omega)} \norm{\v_2}_{L^3(\Omega)}
+\norm{\v}_{L^2(\Gamma)}
 \right.
\\
&\quad \left. 
+ \norm{\Psi}_{L^6(\Gamma)}\norm{\v_2}_{L^3(\Gamma)}  + \norm{\nabla \mu}_{L^2(\Omega)}
+\norm{\Grad_\Ga \Theta}_{L^2(\Gamma)}
\right).
\end{aligned}
\end{align}
Moreover, recalling \eqref{ass:pot}, we obtain
\begin{align}
\label{T12-12}
\begin{aligned}
&\bignorm{F'(\phi_1)-F'(\phi_2)}_{H^1(\Omega)}
\\
&\leq \bignorm{F'(\phi_1)-F'(\phi_2)}_{L^2(\Omega)}
+
\bignorm{F''(\phi_1) \Grad \phi_1 -F''(\phi_2) \Grad \phi_2}_{L^2(\Omega)}
\\
&\leq \bignorm{F'(\phi_1)-F'(\phi_2)}_{L^2(\Omega)}
+ \bignorm{\left( F''(\phi_1)-F''(\phi_2)\right) \Grad \phi_1}_{L^2(\Om)} +
\bignorm{F''(\phi_2) \Grad \phi}_{L^2(\Om)}
\\
&\leq \left( \int_{\Om} \left| \int_0^1 F''(\tau \phi_1 + (1-\tau)\phi_2) \dtau  \right|^2 \phi^2 \dx \right)^\frac12
\\
&\quad +\left( \int_{\Om} \left| \int_0^1 F'''(\tau \phi_1 + (1-\tau)\phi_2) \dtau  \right|^2 \phi^2 |\Grad \phi_1|^2 \dx \right)^\frac12
+ \bignorm{F''(\phi_2)}_{L^6(\Om)} \|\Grad \phi\|_{L^3(\Om)}
\\
&\leq 
\left( 1+ \norm{\phi_1}_{L^{3(p-2)}(\Omega)}^{p-2}+
\norm{\phi_2}_{L^{3(p-2)}(\Omega)}^{p-2} \right) \norm{\phi}_{L^6(\Om)} 
\\
&\quad +
C \left( 1+ \norm{\phi_1}_{L^{12(p-3)}(\Omega)}^{p-3}+
\norm{\phi_2}_{L^{12(p-3)}(\Omega)}^{p-3} \right) \norm{\phi}_{L^6(\Om)} \norm{\nabla \phi_1}_{L^4(\Om)}
+C \norm{\Grad \phi}_{L^3(\Om)}
\\
&\leq C 
\left(1+ \norm{\phi_1}_{W^{1,4}(\Omega)}\right)\norm{\phi}_{H^1(\Omega)} +
C \norm{\phi}_{W^{1,3}(\Omega)}.
\end{aligned}
\end{align}
Similarly, we find
\begin{align}
\label{T12-13}
\begin{aligned}
&\bignorm{G'(\alpha \Psi_1)-G'(\alpha \Psi_2)}_{H^1(\Ga)}
\\
&\leq 
\bignorm{G'(\alpha \Psi_1)-G'(\alpha \Psi_2)}_{L^2(\Ga)}
+\alpha \bignorm{G''(\alpha\Psi_1) \Grad_\Ga \Psi_1 -G''(\alpha \Psi_2) \Grad_\Ga \Psi_2}_{L^2(\Ga)}
\\
&\leq C \norm{\Psi}_{L^2(\Ga)}
+ C \bignorm{\left( G''(\alpha \Psi_1)-G''(\alpha \Psi_2)\right)\Grad_\Ga \Psi_1}_{L^2(\Ga)} 
 + C \bignorm{G''(\alpha \Psi_2)\Grad_\Ga \Psi}_{L^2(\Ga)}
\\
&\leq C \norm{\Psi}_{L^2(\Ga)}
+ C \norm{\Psi}_{L^\infty(\Ga)} \norm{\Grad_\Ga \Psi_1}_{L^2(\Ga)}
+ C \norm{\Grad_\Ga \Psi}_{L^2(\Ga)}
\\
&\leq C \norm{\Psi}_{H^1(\Ga)}.
\end{aligned}
\end{align}
Combining the above 
estimates \eqref{T12-11}--\eqref{T12-13},  we obtain 
\begin{align}
\label{T12-2}
\begin{aligned}
&\left| -\bigang{(\delt \phi,\delt \Psi)}{\big(F'(\phi_1)-F'(\phi_2),\alpha G'(\alpha \Psi_1)-\alpha G'(\alpha \Psi_2)\big)}_{\DD_\alpha}
\right|
\\
&\leq  C \norm{\phi_1}_{L^\infty(\Omega)} \norm{\v}_{L^2(\Omega)} \left( \left(1+ \norm{\phi_1}_{W^{1,4}(\Omega)}\right)\norm{\phi}_{H^1(\Omega)} + \norm{\phi}_{W^{1,3}(\Omega)} + 
\norm{\Psi}_{H^1(\Ga)}\right)
\\
&\quad + C\norm{\v_2}_{L^3(\Omega)} \norm{\phi}_{L^6(\Omega)} 
\left( \left(1+ \norm{\phi_1}_{W^{1,4}(\Omega)}\right)\norm{\phi}_{H^1(\Omega)} + \norm{\phi}_{W^{1,3}(\Omega)} + 
\norm{\Psi}_{H^1(\Ga)}\right)
\\
&\quad +C\norm{\v_2}_{L^3(\Gamma)}  \norm{\Psi}_{L^6(\Gamma)}
\left( \left(1+ \norm{\phi_1}_{W^{1,4}(\Omega)}\right)\norm{\phi}_{H^1(\Omega)} + \norm{\phi}_{W^{1,3}(\Omega)} + 
\norm{\Psi}_{H^1(\Ga)}\right)
\\
&\quad +C\left( \norm{\v}_{L^2(\Ga)} + \norm{\nabla \mu}_{L^2(\Omega)}
+\norm{\Grad_\Ga \Theta}_{L^2(\Gamma)}
\right) 
\\
&\qquad \times \left( \left(1+ \norm{\phi_1}_{W^{1,4}(\Omega)}\right)\norm{\phi}_{H^1(\Omega)} + \norm{\phi}_{W^{1,3}(\Omega)} + 
\norm{\Psi}_{H^1(\Ga)}\right)
\\
&=: R_1+R_2+R_3+R_4.
\end{aligned}
\end{align}
Using Sobolev's embedding theorem, the Gagliardo--Nirenberg interpolation
inequality, the trace theorem and Young's inequality, we infer that
\begin{align*}
    R_1 
    &\leq 
     C\norm{\phi_1}_{L^\infty(\Omega)}
     \left(1+\norm{\phi_1}_{W^{1,4}(\Omega)}
     \right) \left(
     \norm{\v}_{L^2(\Omega)}^2+ \norm{\phi}_{H^1(\Omega)}^2\right)
\notag\\
     &\quad+
     C\norm{\phi_1}_{L^\infty(\Omega)} \norm{\v}_{L^2(\Omega)}
     \norm{\phi}_{W^{1,3}(\Omega)} + 
     C\norm{\phi_1}_{L^\infty(\Omega)} \left( \norm{\v}_{L^2(\Omega)}^2+
\norm{\Psi}_{H^1(\Ga)}^2\right)
\notag\\
    &\leq
    \frac{\varpi}{10}\norm{\phi}_{H^2(\Omega)}^2 + C\norm{\phi_1}_{L^\infty(\Omega)}
     \left(1+\norm{\phi_1}_{W^{1,4}(\Omega)}
     \right) \left(
     \norm{\v}_{L^2(\Omega)}^2+ \norm{\phi}_{H^1(\Omega)}^2\right)
\notag\\
     &\quad
     + C \left(\norm{\phi_1}_{L^\infty(\Omega)}+\norm{\phi_1}_{L^\infty(\Omega)}^2 \right) \left( \norm{\v}_{L^2(\Omega)}^2+
\norm{\Psi}_{H^1(\Ga)}^2\right)
\notag\\
&\leq
    \frac{\varpi}{10}\norm{\phi}_{H^2(\Omega)}^2
 + C \left( 1+
\norm{\phi_1}_{L^\infty(\Omega)}^2
+ \norm{\phi_1}_{W^{1,4}(\Omega)}^2
     \right) \left(
     \norm{\v}_{L^2(\Omega)}^2+ \norm{\phi}_{H^1(\Omega)}^2+
\norm{\Psi}_{H^1(\Ga)}^2\right),
\\[1ex]
     R_2
&\leq C \norm{\v_2}_{L^3(\Omega)} \left(1+ \norm{\phi_1}_{W^{1,4}(\Omega)}\right) \left( \norm{\phi}_{H^1(\Om)}^2+ \norm{\Psi}_{H^1(\Ga)}^2 \right) 
\notag\\
&\quad 
+ C\norm{\v_2}_{L^3(\Omega)} \norm{\phi}_{L^6(\Omega)}
\norm{\phi}_{W^{1,3}(\Omega)}
\notag\\
&\leq 
\frac{\varpi}{10}\norm{\phi}_{H^2(\Omega)}^2
+ C \left( 1+  \norm{\phi_1}_{W^{1,4}(\Omega)}^2 + \norm{\v_2}_{L^3(\Omega)}^2\right) \left( \norm{\phi}_{H^1(\Om)}^2+ \norm{\Psi}_{H^1(\Ga)}^2 \right), 
\\[1ex]
    R_3 
&\leq 
C\left( \norm{\v_2}_{L^3(\Gamma)} +
\norm{\v_2}_{L^3(\Gamma)} \norm{\phi_1}_{W^{1,4}(\Omega)} \right)
\left( \norm{\phi}_{H^1(\Omega)}^2+
\norm{\Psi}_{H^1(\Ga)}^2 \right)
\notag\\
&\quad +C\norm{\v_2}_{L^3(\Gamma)}  \norm{\Psi}_{L^6(\Gamma)}
\norm{\phi}_{W^{1,3}(\Omega)} 
\notag\\
&\leq 
C\left( 1+\norm{\v_2}_{H^1(\Omega)}^2 +
 \norm{\phi_1}_{W^{1,4}(\Omega)}^2 \right)
\left( \norm{\phi}_{H^1(\Omega)}^2+
\norm{\Psi}_{H^1(\Ga)}^2 \right)
\notag\\
&\quad +C\norm{\v_2}_{H^1(\Omega)}  \norm{\Psi}_{H^1(\Gamma)}
\norm{\phi}_{H^1(\Omega)}^\frac23 
\norm{\phi}_{H^2(\Omega)}^\frac13 
\notag\\
&\leq 
\frac{\varpi}{8}\norm{\phi}_{H^2(\Omega)}^2
+ C\left( 1+\norm{\v_2}_{H^1(\Omega)}^2 +
 \norm{\phi_1}_{W^{1,4}(\Omega)}^2 \right)
\left( \norm{\phi}_{H^1(\Omega)}^2+
\norm{\Psi}_{H^1(\Ga)}^2 \right)
\notag\\
&\quad 
+ 
C\norm{\v_2}_{H^1(\Omega)}^\frac65  \norm{\Psi}_{H^1(\Gamma)}^\frac65
\norm{\phi}_{H^1(\Omega)}^\frac45 
\notag\\
&\leq 
\frac{\varpi}{10}\norm{\phi}_{H^2(\Omega)}^2
+ C\left( 1+\norm{\v_2}_{H^1(\Omega)}^2 +
 \norm{\phi_1}_{W^{1,4}(\Omega)}^2 \right)
\left( \norm{\phi}_{H^1(\Omega)}^2
+\norm{\Psi}_{H^1(\Ga)}^2 \right), 
\\[1ex]
    R_4
    &\leq 
    \frac{\gamma_0}{12} \norm{\v}_{L^2(\Gamma)}^2 
    + \frac14 \norm{\nabla \mu}_{L^2(\Omega)}^2 
    +\frac{1}{4\alpha^2} \norm{\Grad_\Ga \Theta}_{L^2(\Gamma)}^2
\notag\\
&\qquad + C  \left(1+ \norm{\phi_1}_{W^{1,4}(\Omega)}^2\right)
\left( \norm{\phi}_{H^1(\Omega)}^2 
+ \norm{\Psi}_{H^1(\Ga)}^2\right) 
+C \norm{\phi}_{H^1(\Omega)}^\frac43
\norm{\phi}_{H^2(\Omega)}^\frac23
\notag\\
&\leq \frac{\gamma_0}{12} \norm{\v}_{L^2(\Gamma)}^2 
    + \frac14 \norm{\nabla \mu}_{L^2(\Omega)}^2 
    +\frac{1}{4\alpha^2} \norm{\Grad_\Ga \Theta}_{L^2(\Gamma)}^2
    +\frac{\varpi}{10}\norm{\phi}_{H^2(\Omega)}^2
\notag\\
&\qquad + C  \left(1+ \norm{\phi_1}_{W^{1,4}(\Omega)}^2\right)
\left( \norm{\phi}_{H^1(\Omega)}^2 
+ \norm{\Psi}_{H^1(\Ga)}^2\right). 
\end{align*}
Plugging these estimates into \eqref{T12-2} and exploiting \eqref{T1}--\eqref{T11}, we eventually obtain from \eqref{Test} the differential inequality
\begin{align}
\label{Diff-IN}
\begin{aligned}
 &\ddt \left[ \frac12 \norm{ \v}_{L^2(\Omega)}^2 + 
 \frac12 \| \phi\|_{H^1(\Omega)}^2
 + \frac12 \| \Psi\|_{L^2(\Ga)}^2 
 + \frac{\alpha^2}{2} \| \Gradg\Psi\|_{L^2(\Ga)}^2  \right]
 + \frac12 \norm{\Grad \v}_{L^2(\Omega)}^2 
\\
&\quad 
 + \frac{\gamma_0}{2} \norm{\v}_{L^2(\Ga)}^2
 + \frac12\norm{\Grad \mu}_{L^2(\Omega)}^2
 + \frac{1}{2\alpha^2}
 \norm{\Grad_\Ga \Theta}_{L^2(\Gamma)}^2
 + \frac12\norm{\Delta \phi}_{L^2(\Omega)}^2  
 + \frac{1}{2} \norm{\Theta}_{L^2(\Ga)}^2 
\\
 &\leq \frac{\varpi}{2}\norm{\phi}_{H^2(\Om)}^2
 + C\mathcal{F}(t) \left[ \frac12 \norm{ \v}_{L^2(\Omega)}^2 + 
 \frac12 \norm{\phi}_{H^1(\Omega)}^2
 + \frac{1}{2} \| \Psi\|_{L^2(\Ga)}^2 + \frac{\alpha^2}{2} \| \Gradg\Psi\|_{L^2(\Ga)}^2 \right],
\end{aligned}
\end{align}
where
\begin{equation*}
       \mathcal{F}(t) := 
       1+ \norm{\v_1(t)}_{L^4(\Omega)}^4+
       \norm{\v_2(t)}_{H^1(\Omega)}^2 +
       \norm{\mu_2(t)}_{H^1(\Omega)}^2+
       \norm{\theta_2(t)}_{H^1(\Ga)}^2+
       \norm{\phi_1(t)}_{W^{1,4}(\Omega)}^2.
\end{equation*}
for almost all $t\in[0,T]$.

Now, reversing the change of variables \eqref{CofV*}, we infer that the original variables $(\v,\phi,\psi,\mu,\theta)$ satisfy the estimate
\begin{align}
\label{Diff-IN*}
\begin{aligned}
 &\ddt \left[ \frac12 \norm{ \v}_{L^2(\Omega)}^2 + 
 \frac12 \| \phi\|_{H^1(\Omega)}^2
 + \frac{1}{2\alpha^2} \| \psi\|_{L^2(\Ga)}^2 
 + \frac{1}{2} \| \Gradg\psi\|_{L^2(\Ga)}^2  \right]
 + \frac12 \norm{\Grad \v}_{L^2(\Omega)}^2 
\\
&\quad 
 + \frac{\gamma_0}{2} \norm{\v}_{L^2(\Ga)}^2
 + \frac12\norm{\Grad \mu}_{L^2(\Omega)}^2
 + \frac{1}{2}
 \norm{\Grad_\Ga \theta}_{L^2(\Gamma)}^2
 + \frac12\norm{\Delta \phi}_{L^2(\Omega)}^2  
 + \frac{\alpha^2}{2} \norm{\theta}_{L^2(\Ga)}^2 
\\
 &\leq \frac{\varpi}{2}\norm{\phi}_{H^2(\Om)}^2
 + C\mathcal{F}(t) \left[ \frac12 \norm{ \v}_{L^2(\Omega)}^2 + 
 \frac12 \norm{\phi}_{H^1(\Omega)}^2
 + \frac{1}{2\alpha^2} \| \psi\|_{L^2(\Ga)}^2 + \frac{1}{2} \| \Gradg\psi\|_{L^2(\Ga)}^2 \right].
\end{aligned}
\end{align}
In order to complete the proof, it remains to absorb the $H^2(\Omega)$-norm of $\phi$. 
Since
 \begin{subequations}
    \label{EQ:UNIQ:STRG*}
    \begin{alignat}{2}
    \label{EQ:UNIQ:STRG:A*}
        \mu &= -\Lap \phi + F'(\phi_1)-F'(\phi_2) &&\quad\text{a.e.~in $Q$},\\
    \label{EQ:UNIQ:STRG:B*}
        \theta &= - \Lapg \psi +  G'(\psi_1)-\alpha G'( \psi_2) 
        +  \deln\phi &&\quad\text{a.e.~on $\Sigma$},
    \end{alignat}
    \end{subequations}
 we use the regularity theory for elliptic problems with bulk-surface coupling (see \cite[Theorem~3.3]{knopf-liu}) to derive that
\begin{align}
\begin{aligned}
\norm{\phi}_{H^2(\Omega)}^2 
&\le \norm{(\phi,\psi)}_{\HH^2}^2 
\le C \norm{(-\Lap\phi , -\Lapg\psi + \deln\phi)}_{\LL^2}^2 
\\
& \le C \Big( \norm{\Lap\phi}_{L^2(\Omega)}^2 
    + \norm{\theta}_{L^2(\Gamma)}^2
    + \bignorm{G'(\psi_1) - G'(\psi_2)}_{L^2(\Gamma)}^2 \Big)
\\
&\leq C_0 \left( \norm{\Delta \phi}_{L^2(\Omega)}^2 + \alpha^2 \norm{\theta}_{L^2(\Gamma)}^2 \right) 
+ \frac{C}{\alpha^2} \norm{\psi}_{L^2(\Ga)}^2.
\end{aligned}
\end{align}
Multiplying the above estimate by $\frac{1}{4C_0}$, and adding the resulting inequality to \eqref{Diff-IN}, we obtain
\begin{align*}
%\label{Diff-IN2}
\begin{aligned}
 &\ddt \left[ \frac12 \norm{ \v}_{L^2(\Omega)}^2 + 
 \frac12 \| \phi\|_{H^1(\Omega)}^2
 + \frac{1}{2\alpha^2} \| \psi\|_{L^2(\Ga)}^2 + \frac 12 \| \Gradg\psi\|_{L^2(\Ga)}^2 \right]
 + \frac12 \norm{\Grad \v}_{L^2(\Omega)}^2 
+\frac{\gamma_0}{2} \norm{\v}_{L^2(\Ga)}^2
\\
&\quad 
 + \frac12\norm{\Grad \mu}_{L^2(\Omega)}^2
 +\frac12\norm{\Grad_\Ga \theta}_{L^2(\Gamma)}^2
 +\frac14\norm{\Delta \phi}_{L^2(\Omega)}^2  +\frac{\alpha^2}{4} \norm{\theta}_{L^2(\Ga)}^2 
 +\left( \frac{1}{4C_0} - \frac{\varpi}{2}\right) \norm{\phi}_{H^2(\Omega)}^2
\\
 &\leq C\mathcal{F}(t) \left[ \frac12 \norm{ \v}_{L^2(\Omega)}^2 + 
 \frac12 \| \phi\|_{H^1(\Omega)}^2
 + \frac{1}{2\alpha^2} \| \psi\|_{L^2(\Ga)}^2 + \frac 12 \| \Gradg\psi\|_{L^2(\Ga)}^2 \right].
 \end{aligned}
 \end{align*}
 Thus, choosing $\varpi\leq \frac{1}{2C_0}$, we conclude that 
 \begin{equation}
\label{Diff-IN3}
\begin{split}
&\ddt \left[ \frac12 \norm{ \v}_{L^2(\Omega)}^2 + 
 \frac12 \| \phi\|_{H^1(\Omega)}^2
 + \frac{1}{2\alpha^2} \| \psi\|_{L^2(\Ga)}^2 + \frac 12 \| \Gradg\psi\|_{L^2(\Ga)}^2 \right]
 \\
 &\quad \leq C\mathcal{F}(t) \left[ \frac12 \norm{ \v}_{L^2(\Omega)}^2 + 
 \frac12 \| \phi\|_{H^1(\Omega)}^2
 + \frac{1}{2\alpha^2} \| \psi\|_{L^2(\Ga)}^2 + \frac 12 \| \Gradg\psi\|_{L^2(\Ga)}^2 \right].
 \end{split}
 \end{equation}
 Since $\mathcal{F}\in L^1(0,T)$, Gronwall's lemma 
  directly implies that the stability estimate \eqref{cont-dep*} holds for all $t\in [0,T]$. As a consequence, the uniqueness of weak solutions immediately follows. Thus, the proof is complete.
\end{proof}

\appendix
\section{A chain rule formula.}
\label{A}
\setcounter{equation}{0}
Let $\kappa$ and $\sigma$ be two positive constants. We define the function $\Phi_\kappa^{\sigma}: \LL^2 \rightarrow [0,\infty]$ as
\begin{align}
    \Phi_\kappa^\sigma(\phi,\psi)=
    \left\{
    \begin{aligned}
    &\int_{\Omega} \frac12 |\nabla \phi|^2 \dx + 
    \int_{\Ga} \frac{\kappa}{2} |\nabla_\Ga \psi|^2 \dS, \quad &&\text{if } (\phi,\psi)\in \DD_\sigma,\\
    &+\infty, \quad &&\text{otherwise}.
    \end{aligned}
    \right.
\end{align}

\begin{proposition}
For any $\kappa \in (0,\infty)$ and $\sigma \in (0,\infty)$, the function $\Phi_\kappa^\sigma$ is convex, lower semicontinuous and proper. The subgradient $\mathcal{A}_\kappa^\sigma=\partial \Phi_\kappa^\sigma$ is given by 
\begin{equation}
\label{subgradient}
\mathcal{A}_\kappa^\sigma(\phi,\psi)=(-\Delta \phi, - \kappa \Delta_\Ga \psi+ \sigma \partial_\n \phi), \quad \text{for all $(\phi,\psi)\in D(\mathcal{A}_\kappa^\sigma)$},
\end{equation}
where
\begin{equation}
\label{subgradient-dom}
D(\mathcal{A}_\kappa)=  \HH^2 \cap \DD_\sigma.
\end{equation}
\end{proposition}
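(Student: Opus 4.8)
The plan is to regard $\Phi_\kappa^\sigma$ as the convex quadratic functional associated with the symmetric bilinear form
\[
a\big((\phi,\psi),(\zeta,\xi)\big) := \intO \Grad\phi\cdot\Grad\zeta \dx + \kappa \intG \Gradg\psi\cdot\Gradg\xi \dS
\]
on the subspace $\DD_\sigma$, and then to identify its subgradient by combining the abstract subgradient characterization for such functionals with the bulk-surface elliptic regularity theory from \cite{knopf-liu}. Properness is immediate, since every constant pair $(\sigma c, c)$ with $c\in\R$ belongs to $\DD_\sigma$ and gives $\Phi_\kappa^\sigma = 0$. Convexity follows because $\DD_\sigma$ is a linear (hence convex) subspace of $\HH^1$ and $(\phi,\psi)\mapsto a((\phi,\psi),(\phi,\psi))$ is a nonnegative quadratic form. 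For lower semicontinuity with respect to $\LL^2$-convergence, I would take a sequence $(\phi_n,\psi_n)\to(\phi,\psi)$ in $\LL^2$ realising the $\liminf$ of $\Phi_\kappa^\sigma$; assuming this value is finite, the $\LL^2$-bound together with the uniform bound on the gradient seminorms supplied by $\Phi_\kappa^\sigma$ yields a uniform $\HH^1$-bound. A subsequence then converges weakly in $\HH^1$, necessarily to $(\phi,\psi)$, and the weak limit lies in $\DD_\sigma$ because the trace constraint $\phi\vert_\Ga = \sigma\psi$ is preserved under weak $\HH^1$-convergence (the trace operator being continuous, hence weakly continuous). Weak lower semicontinuity of the convex, continuous form $a$ then gives $\Phi_\kappa^\sigma(\phi,\psi) \le \liminf \Phi_\kappa^\sigma(\phi_n,\psi_n)$.

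For the subgradient I would first record the standard fact that, for a nonnegative symmetric form, $(\phi^*,\psi^*)\in\partial\Phi_\kappa^\sigma(\phi,\psi)$ holds if and only if $(\phi,\psi)\in\DD_\sigma$ and
\[
a\big((\phi,\psi),(\zeta,\xi)\big) = \intO \phi^*\zeta \dx + \intG \psi^*\xi \dS \quad\text{for all } (\zeta,\xi)\in\DD_\sigma,
\]
which is verified by inserting the admissible variations $(\phi,\psi)+t(\zeta,\xi)$ into the defining subgradient inequality and letting $t\to 0^\pm$; the representative is unique because $\DD_\sigma$ is dense in $\LL^2$. To prove $\HH^2\cap\DD_\sigma \subseteq D(\mathcal{A}_\kappa^\sigma)$ together with \eqref{subgradient}, I would take $(\phi,\psi)\in\HH^2\cap\DD_\sigma$ and integrate by parts: Green's formula in $\Om$ produces the term $\intG \deln\phi\, \zeta\vert_\Ga \dS = \sigma \intG \deln\phi\,\xi \dS$ (using $\zeta\vert_\Ga = \sigma\xi$), while integration by parts on the closed manifold $\Ga$ produces $-\kappa \intG \Lapg\psi\,\xi \dS$ with no boundary contribution. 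Summing identifies the unique $\LL^2$-representative as $(-\Lap\phi, -\kappa\Lapg\psi + \sigma\deln\phi)$.

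The reverse inclusion $D(\mathcal{A}_\kappa^\sigma) \subseteq \HH^2\cap\DD_\sigma$ is where the essential work lies, and I expect it to be the main obstacle. Given $(\phi,\psi)\in D(\mathcal{A}_\kappa^\sigma)$ with representative $(\phi^*,\psi^*)\in\LL^2$, the characterization above shows that $(\phi,\psi)\in\DD_\sigma$ is a weak solution of the coupled elliptic problem $-\Lap\phi = \phi^*$ in $\Om$, $-\kappa\Lapg\psi + \sigma\deln\phi = \psi^*$ on $\Ga$, and $\phi\vert_\Ga = \sigma\psi$ on $\Ga$. Since the bulk and surface equations are linked both by the Dirichlet-type transmission condition and by the conormal term $\sigma\deln\phi$, scalar elliptic regularity does not apply directly; instead I would invoke the regularity theory for elliptic problems with bulk-surface coupling (\cite[Theorem~3.3]{knopf-liu}), valid since the boundary $\Ga$ is sufficiently smooth, to conclude $(\phi,\psi)\in\HH^2$. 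A backward integration by parts as in the forward step then identifies $\phi^* = -\Lap\phi$ and $\psi^* = -\kappa\Lapg\psi + \sigma\deln\phi$ almost everywhere, which establishes both \eqref{subgradient} and \eqref{subgradient-dom} and completes the proof.
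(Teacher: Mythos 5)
Your proposal is correct, and it reaches the conclusion by a genuinely different route than the paper. The paper follows the classical maximal-monotonicity template: it first verifies directly, via integration by parts, that the candidate operator $\mathcal{A}_\kappa^\sigma$ satisfies the subgradient inequality $(\mathcal{A}_\kappa^\sigma(\phi,\psi),(\phi,\psi)-(\zeta,\xi))_{\LL^2}\ge \Phi_\kappa^\sigma(\phi,\psi)-\Phi_\kappa^\sigma(\zeta,\xi)$, so that $\mathcal{A}_\kappa^\sigma\subset\partial\Phi_\kappa^\sigma$, and then upgrades the inclusion to equality by proving $R(I+\mathcal{A}_\kappa^\sigma)=\LL^2$: the resolvent equation is solved weakly by Lax--Milgram and lifted to $\HH^2$ by the bulk-surface elliptic regularity of \cite[Theorem~3.3]{knopf-liu}. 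You instead characterize $\partial\Phi_\kappa^\sigma$ intrinsically, using the two-sided Gateaux variation $t\mapsto\Phi_\kappa^\sigma((\phi,\psi)+t(\zeta,\xi))$ along the subspace $\DD_\sigma$ to show that $(\phi^*,\psi^*)\in\partial\Phi_\kappa^\sigma(\phi,\psi)$ is equivalent to the variational identity $a((\phi,\psi),(\zeta,\xi))=((\phi^*,\psi^*),(\zeta,\xi))_{\LL^2}$ for all $(\zeta,\xi)\in\DD_\sigma$, and then apply the same elliptic regularity theorem directly to this Euler--Lagrange system to get $D(\partial\Phi_\kappa^\sigma)\subseteq\HH^2\cap\DD_\sigma$. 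Your route is more elementary in that it bypasses maximal monotone operator theory entirely and never needs to solve an auxiliary resolvent problem (existence is not at issue in the reverse inclusion, only regularity of an already-given weak solution); the paper's route is the more standard template and would generalize more readily to non-quadratic perturbations of $\Phi_\kappa^\sigma$, where the clean variational characterization of the subgradient is no longer available. Both arguments hinge on exactly the same key external input, namely the $\HH^2$ regularity for the coupled bulk-surface elliptic system with $\LL^2$ data. Your auxiliary observations (properness via constant pairs $(\sigma c,c)$, weak closedness of $\DD_\sigma$ under the trace constraint for the lower semicontinuity, and uniqueness of the $\LL^2$-representative via density of $\DD_\sigma$ in $\LL^2$) are all sound, and your integration by parts on the closed manifold $\Gamma$ correctly produces no boundary contribution.
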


\begin{proof}
It is easy to see that $\Phi_\kappa^\sigma$ is convex and proper. The lower semicontinuity of $\Phi_\kappa^\sigma$ immediately follows from the weak lower semicontinuity of the norm with respect to the weak convergence. 

Let us now consider the operator $\mathcal{A}_\kappa^\sigma: D(\mathcal{A}_\kappa^\sigma)\subset \LL^2 \rightarrow \LL^2$ as defined in \eqref{subgradient}--\eqref{subgradient-dom}. First of all, we observe that $\mathcal{A}_\kappa^\sigma$ is well defined since $\partial_\n \phi \in H^{\frac12}(\Ga)$ for any $\phi \in H^2(\Omega)$.
For any $(\phi,\psi)\in D(\mathcal{A}_\kappa^\sigma)$ and $(\zeta,\xi) \in \DD_\sigma$, we have 
\begin{align*}
    (\mathcal{A}_\kappa^\sigma(\phi,\psi), (\phi,\psi)-(\zeta,\xi))_{\LL^2}
    &=\intO -\Delta \phi (\phi-\zeta)\dx + \intG (- \kappa \Delta_\Ga \psi+ \sigma \partial_\n \phi)(\psi-\xi) \dS\\
    &=\intO \nabla \phi \cdot \nabla(\phi-\zeta) \dx 
      - \intG \partial_\n \phi (\phi-\zeta) \dS\\
    &\quad  + \intG \kappa \nabla_\Ga \psi \cdot\nabla_\Ga(\psi-\xi)\dS + \intG  \sigma \partial_\n \phi(\psi-\xi) \dS\\
    &=\intO \nabla \phi \cdot \nabla(\phi-\zeta) \dx 
      - \sigma \intG \partial_\n \phi (\psi-\xi) \dS\\
    &\quad + \intG \kappa \nabla_\Ga \psi \cdot\nabla_\Ga(\psi-\xi)\dS + \sigma \intG  \partial_\n \phi(\psi-\xi) \dS\\
    &=\intO \nabla \phi \cdot \nabla(\phi-\zeta) \dx 
    + \intG \kappa \nabla_\Ga \psi \cdot\nabla_\Ga(\psi-\xi)\dS\\
    &\geq \int_{\Omega} \frac12 |\nabla \phi|^2 \dx - \int_{\Omega} \frac12 |\nabla \zeta|^2 \dx\\
    &\quad +\int_{\Ga} \frac{\kappa}{2} |\nabla_\Ga \psi|^2 \dS
    - \int_{\Ga} \frac{\kappa}{2} |\nabla_\Ga \xi|^2 \dS.
\end{align*}
Thus, we found
$$
(\mathcal{A}_\kappa^\sigma(\phi,\psi), (\phi,\psi)-(\zeta,\xi))_{\LL^2}
\geq \Phi_\kappa^\sigma(\phi,\psi)-\Phi_\kappa^\sigma(\zeta,\xi), \quad \text{for all $(\phi,\psi)\in D(\mathcal{A}_\kappa^\sigma), \ (\zeta,\xi) \in \DD_\sigma$.}
$$
The above inequality can be easily extended to 
$$
(\mathcal{A}_\kappa^\sigma(\phi,\psi), (\phi,\psi)-(\zeta,\xi))_{\LL^2}
\geq \Phi_\kappa^\sigma(\phi,\psi)-\Phi_\kappa^\sigma(\zeta,\xi), \quad \text{for all $(\phi,\psi)\in D(\mathcal{A}_\kappa^\sigma), \ (\zeta,\xi) \in \LL^2$.}
$$
This implies that $\mathcal{A}_\kappa\subset \partial \Phi_\kappa$, i.e., $D(\mathcal{A}_\kappa)\subset D(\partial \Phi_\kappa)$ and $\mathcal{A}_\kappa(\phi,\psi)\in \partial \Phi_\kappa(\phi,\psi)$ for all $(\phi,\psi)\in D(\mathcal{A}_\kappa)$. 

In order to conclude that $\mathcal{A}_\kappa^\sigma=\partial \Phi_\kappa^\sigma$, we are left to show that $\mathcal{A}_\kappa^\sigma$ is maximal monotone in $\LL^2\times \LL^2$, namely $R(I+A_\kappa^\sigma)= \LL^2$. For this purpose, let us fix $(f,g) \in \LL^2$ and consider the equation $(\phi,\psi)+\mathcal{A}_\kappa^\sigma(\phi,\psi)=(f,g)$, which reads as
\begin{subequations}
	\label{EQ:subgradient}
	\begin{align}
		\phi-\Lap \phi &= f && \text{ in } \Omega, \\
		\psi- \kappa \Lapg \psi + \sigma \deln \phi &= g && \text{ on } \Gamma, \\
			\label{EQ:subgradient-bc}
			 \phi \vert_\Ga &= \sigma \psi && \text{ on } \Gamma.
		\end{align}
	\end{subequations}
	A pair $(\phi, \psi)\in \DD_\sigma$ is a corresponding weak solution if
	\begin{equation}
	\label{Eqweak:subgradient}
	    \bigscp{(\phi,\psi)}{(\zeta,\xi)}_{\LL^2}+
	    \intO \nabla \phi \cdot \nabla \zeta \dx 
        + \intG  \kappa \nabla_\Ga \psi \cdot\nabla_\Ga\xi \dS =
	    \bigscp{(f,g)}{(\zeta,\xi)}_{\LL^2}
	\end{equation}
for all $(\zeta,\xi) \in \DD_\sigma$.
The existence and uniqueness of $(\phi,\psi)\in \DD_\sigma$ solving \eqref{Eqweak:subgradient} follows directly from the Lax-Milgram theorem. Next, using regularity theory for elliptic problems with bulk-surface coupling (see \cite[Theorem 3.3]{knopf-liu}), where $(f-\phi,g-\psi)\in\LL^2$ is interpreted as the source term,
we infer that $(\phi,\psi) \in \HH^2$ and \eqref{EQ:subgradient} holds almost everywhere. This entails the desired conclusion. 	
\end{proof}

\begin{proposition}
\label{chainrule}
Let $\kappa\in (0,\infty)$ and $\sigma\in (0,\infty)$. Assume that $(\phi,\psi) \in L^2(0,T; \DD_\sigma)\cap H^1(0,T;\DD_\sigma')$ such that $(-\Delta \phi,- \kappa \Delta_\Ga \psi+ \sigma \partial_\n \phi)\in L^2(0,T; \DD_\sigma)$. Then, the chain rule formula 
\begin{equation}
\label{CRformula}
\ddt \left[ \frac12 \| \Grad \phi\|_{L^2(\Omega)}^2+ \frac{\kappa}{2} \| \Grad_\Ga \psi\|_{L^2(\Ga)}^2 \right]
=\bigang{(\delt \phi,\delt \psi)}{(-\Delta \phi,-\kappa \Delta_\Ga \psi+ \sigma \partial_\n \phi)}_{\DD_\sigma}
\end{equation}
holds for almost every $t \in [0,T]$.
\end{proposition}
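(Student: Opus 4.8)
The plan is to recognize the right-hand side as a pairing against the subgradient of $\Phi_\kappa^\sigma$ and then to invoke the chain rule for the quadratic energy $\Phi_\kappa^\sigma$ along the trajectory $u := (\phi,\psi)$. First I would observe that the left-hand side equals $\ddt\Phi_\kappa^\sigma(u(t))$, and that, by the preceding proposition, the hypothesis $(-\Lap\phi,-\kappa\Lapg\psi+\sigma\deln\phi)\in L^2(0,T;\DD_\sigma)$ forces $u(t)\in\HH^2\cap\DD_\sigma = D(\mathcal{A}_\kappa^\sigma)$ for almost every $t$, with $w(t):=(-\Lap\phi,-\kappa\Lapg\psi+\sigma\deln\phi) = \mathcal{A}_\kappa^\sigma u(t) = \partial\Phi_\kappa^\sigma(u(t))$. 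Thus the identity to prove reads $\ddt\Phi_\kappa^\sigma(u(t)) = \ang{\delt u(t)}{w(t)}_{\DD_\sigma}$, the canonical chain rule along a $(\DD_\sigma,\LL^2,\DD_\sigma')$-Gelfand-triple curve with $\delt u\in L^2(0,T;\DD_\sigma')$ and $w\in L^2(0,T;\DD_\sigma)$.

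The decisive simplification is that $\Phi_\kappa^\sigma$ is a quadratic form, so $\mathcal{A}_\kappa^\sigma$ is linear. Introducing the symmetric, continuous bilinear form $a(u,v):=\intO\Grad\phi\cdot\Grad\zeta\dx + \kappa\intG\Gradg\psi\cdot\Gradg\xi\dS$ on $\DD_\sigma$ (where $u=(\phi,\psi)$, $v=(\zeta,\xi)$), integration by parts together with the trace relation $\zeta\vert_\Ga=\sigma\xi$ yields $a(u,v)=\scp{\mathcal{A}_\kappa^\sigma u}{v}_{\LL^2}$ for $u\in\HH^2\cap\DD_\sigma$, $v\in\DD_\sigma$ — the boundary contributions $-\intG\deln\phi\,\zeta\dS$ and $\sigma\intG\deln\phi\,\xi\dS$ cancelling exactly as in the proof of the previous proposition. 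Since $\Phi_\kappa^\sigma(u)=\tfrac12 a(u,u)$, this is the algebraic backbone of the chain rule.

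Next I would regularize in time. Let $u_h$ denote a time mollification (Steklov average) of $u$ on a compact subinterval $[s,t]\subset(0,T)$; then $u_h\in C^\infty([s,t];\DD_\sigma)$, $u_h\to u$ in $L^2(s,t;\DD_\sigma)$ and $\delt u_h\to\delt u$ in $L^2(s,t;\DD_\sigma')$. Because $\mathcal{A}_\kappa^\sigma$ is linear and commutes with time convolution, $\mathcal{A}_\kappa^\sigma u_h=(\mathcal{A}_\kappa^\sigma u)_h\to\mathcal{A}_\kappa^\sigma u=w$ in $L^2(s,t;\DD_\sigma)$; in particular $u_h(r)\in\HH^2\cap\DD_\sigma$ for every $r$ by elliptic regularity. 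For these smooth-in-time curves the classical chain rule gives $\ddt\big[\tfrac12 a(u_h,u_h)\big]=a(\delt u_h,u_h)=\scp{\delt u_h}{\mathcal{A}_\kappa^\sigma u_h}_{\LL^2}=\ang{\delt u_h}{\mathcal{A}_\kappa^\sigma u_h}_{\DD_\sigma}$, using the symmetry of $a$ together with the bilinear-form identity and the fact that the duality pairing extends the $\LL^2$-product. Integrating on $[s,t]$ and letting $h\to0$ — the left-hand side converging pointwise after extracting a subsequence for which $u_h(r)\to u(r)$ in $\DD_\sigma$, and the right-hand side by weak-strong convergence of $\delt u_h$ paired against $\mathcal{A}_\kappa^\sigma u_h$ — yields $\tfrac12 a(u(t),u(t))-\tfrac12 a(u(s),u(s))=\int_s^t\ang{\delt u(\tau)}{w(\tau)}_{\DD_\sigma}\dtau$ for almost every $s,t$. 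This exhibits $t\mapsto\Phi_\kappa^\sigma(u(t))$ as absolutely continuous with the asserted derivative a.e., the endpoints $t=0,T$ being covered by the embedding $L^2(0,T;\DD_\sigma)\cap H^1(0,T;\DD_\sigma')\emb C([0,T];\LL^2)$.

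The main obstacle I anticipate is the careful bookkeeping of the Gelfand triple: the time derivative $\delt u$ lives only in $\DD_\sigma'$, so every occurrence of $a(\delt u_h,u_h)$ and of the limiting pairing must be read through the duality $\ang{\cdot}{\cdot}_{\DD_\sigma}$, and one must verify that the cross boundary terms cancel so that $a(\cdot,\cdot)$ genuinely represents $\mathcal{A}_\kappa^\sigma$ on $\DD_\sigma$ and not on a larger space. An alternative, equally viable, would be to quote an abstract chain-rule lemma for subdifferentials of convex lower semicontinuous functionals along Gelfand-triple trajectories; the mollification argument above is essentially its proof specialized to the quadratic $\Phi_\kappa^\sigma$, and I would favor it here because the linearity of $\mathcal{A}_\kappa^\sigma$ makes the commutation with mollification immediate.
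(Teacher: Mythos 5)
Your argument is correct, but it takes a genuinely different route from the paper. The paper does not mollify in time: it augments $\Phi_\kappa^\sigma$ by the quadratic functional $\widetilde{\Phi}(\phi,\psi)=\tfrac12\norm{(\phi,\psi)}_{\LL^2}^2$ so that the sum $\Phi^\star=\Phi_\kappa^\sigma+\widetilde{\Phi}$ dominates the $\LL^2$-norm squared, identifies $\partial\Phi^\star=\mathcal{A}_\kappa^\sigma+\mathrm{I}$ as a maximal monotone sum (via Showalter's perturbation lemma), invokes the abstract chain rule for subdifferentials of convex, lower semicontinuous functionals along Gelfand-triple trajectories from \cite[Lemma 4.1]{rocca2004universal}, and finally subtracts the elementary identity $\ddt\widetilde{\Phi}(\phi,\psi)=\ang{(\delt\phi,\delt\psi)}{(\phi,\psi)}_{\DD_\sigma}$. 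Your mollification argument replaces this convex-analysis machinery by the linearity of $\mathcal{A}_\kappa^\sigma$ and the classical fundamental theorem of calculus for smooth curves; it is more elementary and self-contained, at the price of some bookkeeping you correctly flag (closedness of $\mathcal{A}_\kappa^\sigma$ to justify $\mathcal{A}_\kappa^\sigma u_h=(\mathcal{A}_\kappa^\sigma u)_h$, the a.e.\ membership $u(t)\in\HH^2\cap\DD_\sigma$ implicit in the hypothesis, and reading every pairing through the duality $\ang{\cdot}{\cdot}_{\DD_\sigma}$), whereas the paper's route would survive unchanged for non-quadratic convex energies. Both proofs hinge on the same integration-by-parts cancellation of the boundary terms $-\intG\deln\phi\,\zeta\dS$ and $\sigma\intG\deln\phi\,\xi\dS$ on $\DD_\sigma$, which the paper establishes once in the preceding proposition and which you correctly reuse to represent the bilinear form by $\mathcal{A}_\kappa^\sigma$.
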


\begin{proof}
We define the function $\widetilde{\Phi}: \LL^2 \rightarrow [0,\infty]$ as
\begin{align}
    \widetilde{\Phi}(\phi,\psi)=
    \left\{
    \begin{aligned}
    &\int_{\Omega} \frac12 |\phi|^2 \dx + 
    \int_{\Ga} \frac12 |\psi|^2 \dS, \quad &&\text{if } (\phi,\psi)\in \LL^2,\\
    &+\infty, \quad &&\text{otherwise}.
    \end{aligned}
    \right.
\end{align}
It is easily seen that $\widetilde{\Phi}$ is convex, lower semicontinuous and proper. In addition, the subgradient $\mathcal{B}: \LL^2\rightarrow \LL^2$, $\mathcal{B}(\phi,\psi)= (\phi,\psi)$ is Lipschitz in $\HH^0$. Let us now consider 
$\Phi^\star=\Phi_\kappa^\sigma+\widetilde{\Phi}: \LL^2\rightarrow [0,\infty]$, which clearly is a convex, lower semicontinuous, proper functional. Moreover, we have 
$$
    \Phi^\star(\phi,\psi) \geq \widetilde{\Phi}(\phi,\psi)= \norm{(\phi,\psi)}_{\LL^2}^2
    \quad\text{for all $(\phi,\psi) \in\LL^2$.}
$$
Thanks to \cite[Lemma 2.1, Chapter IV]{showalter2013monotone}  , we infer that $\mathcal{A}_\kappa^\sigma+\mathcal{B}$ is a maximal monotone operator with domain 
$D(\mathcal{A}_\kappa^\sigma+\mathcal{B})=D(\mathcal{A}_\kappa^\sigma)$, which coincides with $ \partial \Phi^\star$. In light of our assumptions, we recall that 
$(\phi,\psi) \in L^2(0,T; \DD_\sigma)\cap H^1(0,T;\DD_\sigma')$. Then, since $\phi|_\Ga = \sigma \psi$ on $\Ga$, we observe that 
$$
\partial \Phi^\star(\phi,\psi)= 
(-\Delta \phi+\phi, -\kappa \Delta_\Ga \psi+ \sigma \partial_\n \phi +\psi)
\in L^2(0,T;\DD_\sigma).
$$
Therefore, we are in position to apply \cite[Lemma 4.1]{rocca2004universal}, which entails that $t\rightarrow \Phi^\star(\phi(t),\psi(t))$ is absolutely continuous on $[0,T]$ and the chain rule
\begin{equation}
\label{CR-prel}
\begin{split}
&\ddt \left[ \frac12 \| \phi\|_{H^1(\Omega)}^2+ \frac12 \|  \psi\|_{L^2(\Ga)}^2 +\frac{\kappa}{2} \| \Grad_\Ga \psi\|_{L^2(\Ga)}^2 \right]\\
&\quad =\bigang{(\delt \phi,\delt \psi)}{(-\Delta \phi +\phi,- \kappa \Delta_\Ga \psi+ \sigma \partial_\n \phi +\psi)}_{\DD_\sigma}
\end{split}
\end{equation}
holds almost everywhere in $[0,T]$.
On the other hand, by the classical chain rule formula (or, alternatively, by using \cite[Lemma 4.1]{rocca2004universal} with $\mathcal{J}= \widetilde{\Phi}$), we  know that 
\begin{equation}
\label{CR-prel2}
\ddt \left[ \frac12 \| \phi\|_{L^2(\Omega)}^2+ \frac12 \|  \psi\|_{L^2(\Ga)}^2 \right]
=\bigang{(\delt \phi,\delt \psi)}{( \phi,\psi)}_{\DD_\sigma}
\end{equation}
almost everywhere in $[0,T]$. Finally, subtracting \eqref{CR-prel2} from \eqref{CR-prel}, we reach the claimed \eqref{CRformula}.
\end{proof}

\section*{Acknowledgement}
The authors want to thank Harald Garcke for helpful discussions. Part of this work was done while the first author was visiting the Department of Mathematics at University of Regensburg whose hospitality is gratefully acknowledged.
Andrea Giorgini was supported by the MUR grant Dipartimento di Eccellenza 2023-2027 and by Gruppo Nazionale per l'Analisi Matematica, la Probabilità e le loro Applicazioni (GNAMPA) of Istituto Nazionale per l'Alta Matematica (INdAM).
Patrik Knopf was partially supported by the Deutsche Forschungsgemeinschaft (DFG, German Research Foundation): on the one hand by the DFG-project 524694286, and on the other hand by the RTG 2339 ``Interfaces, Complex Structures, and Singular Limits''.
Their support is gratefully acknowledged.

\section*{Competing Interests and funding}
The authors do not have any financial or non-financial interests that are directly or indirectly related to the work submitted for publication.

\section*{Data availability statement}
No further data is used in this manuscript.

%%%%%%%%%%%%%%%%%%%%%%%%%%%%%%%%%%%%%%%%%%%%%%%%%%%%%%%%%%%%%%%%%%%%%%%%%%%%%%%%%
%********************************************************************************
% REFERENCES
%********************************************************************************
%%%%%%%%%%%%%%%%%%%%%%%%%%%%%%%%%%%%%%%%%%%%%%%%%%%%%%%%%%%%%%%%%%%%%%%%%%%%%%%%%

\footnotesize

\bibliographystyle{abbrv}
\bibliography{GK}

\end{document}